\theoremstyle{plain}
\newtheorem{thm}{Theorem}
\newtheorem{assum}{Assumption}
\newtheorem{cor}{Corollary}
\newtheorem{rmk}{Remark}
\newtheorem{prop}{Proposition}
\newtheorem{lem}{Lemma}
\newcommand{\Rset}{\mathbb{R}}
\newcommand{\hd}{{\hat{d}}}
\newcommand{\hx}{{\hat{x}}}
\newcommand{\CC}{{\mathcal{C}}}
\newcommand{\DD}{{\mathcal{D}}}
\newcommand{\EE}{{\mathcal{E}}}
\newcommand{\FF}{{\mathcal{F}}}
\newcommand{\GG}{{\mathcal{G}}}
\newcommand{\LL}{{\mathcal{L}}}
\newcommand{\MM}{{\mathcal{M}}}
\newcommand{\NN}{{\mathcal{N}}}
\newcommand{\PP}{{\mathcal{P}}}
\newcommand{\QQ}{{\mathcal{Q}}}
\newcommand{\UU}{{\mathcal{U}}}
\newcommand{\VV}{{\mathcal{V}}}
\newcommand{\diag}{{\mbox{diag}}}                  
\newcommand{\mbf}[1]{\mathbf{#1}}                  
\newcommand{\subss}[2]{{#1}_{[#2]}}
\newcommand{\dx}{{\dot x}}
\begin{document}
	\title{\LARGE \bf Hierarchical Plug-and-Play Voltage/Current Controller of DC Microgrid Clusters with Grid-Forming/Feeding Converters: Line-independent Primary Stabilization and Leader-based Distributed Secondary Regulation}
        
          \author[1]{Renke Han%
       \thanks{Electronic address:
         \texttt{rha@et.aau.dk}; Corresponding author}}
        \author[2]{Michele Tucci%
       \thanks{Electronic address: \texttt{michele.tucci02@universitadipavia.it}}}
    \author[3]{Raffaele Soloperto%
    	\thanks{Electronic address: \texttt{raffaele.soloperto@ist.uni-stuttgart.de}} }
    \author[4]{Andrea Martinelli%
    	\thanks{Electronic address: \texttt{andrea5.martinelli@mail.polimi.it}} }
     \author[1]{Josep M. Guerrero%
    	\thanks{Electronic address: \texttt{joz@et.aau.dk}} }
     \author[5]{Giancarlo Ferrari-Trecate%
       \thanks{Electronic address: \texttt{giancarlo.ferraritrecate@epfl.ch}\\This work has received support from the Swiss National Science Foundation
       	  	under the COFLEX project (grant number 200021-169906)} }

     \affil[1]{Department of Energy Technology, Aalborg University, Aalborg, Denmark}
     \affil[2]{Dipartimento di Ingegneria Industriale e
     	dell'Informazione\\Universit\`a degli Studi di Pavia}  
     \affil[3]{Institute of System Theory and Automatic Control, University of Stuttgart, Stuttgart, Germany}
     \affil[4]{Dipartimento di Elettronica, Informazione e Bioingegneria, Politecnico di Milano, 20133 Milano, Italy.}
     \affil[5]{Automatic Control Laboratory, \'Ecole Polytechnique F\'ed\'erale de Lausanne (EPFL), 1015 Lausanne, Switzerland.}
     
     \date{\textbf{Technical Report}\\ July, 2017}

     \maketitle
     \begin{abstract}
 Considering the single MG composed of grid-forming/feeding converters and the MG clusters, the hierarchical Plug-and-Play (PnP) voltage/current controller of MG clusters is proposed. Different from existing methods, the main contributions are provided as follows:
 \begin{itemize}
 \item In a single MG, a PnP controller for the current-controlled distributed generation units (CDGUs) is proposed to achieve grid-feeding current tracking while guaranteeing the stability of the whole system. Moreover, the set of stabilizing controllers for CDGUs is characterized explicitly in terms of simple inequalities on the control coefficients. With the proposed controller, CDGUs can  plug-in/out of the MG seamlessly without knowing any information of the MG system and without changing control coefficients for other units.
 \item Interconnected with singel consisting of CDGU and voltage-controlled DGUs (VDGU), MG clusters are formed. To be specific, the CDGU  is used for renewable energy sources (RES) to feed current and VDGU is used for energy storage system (ESS) to provide voltage support. A PnP voltage/current controller is proposed to achieve simultaneous grid-forming/feeding function irrespective of the power line parameters. Also in this case, the stabilizing controller is related only to local parameters of a MG and is characterized by explicit inequalities. With the proposed controller, MGs can  plug-in/out of the MG clusters seamlessly without knowing any information of the system and changing coefficients for other MGs. 
 \item For the system with interconnection of MGs, a leader-based voltage/current distributed secondary controller is proposed to achieve both the voltage and current regulation without specifying the individual setpoints for each MGs. The proposed controller requires communication network and each controller exchanges information with its communication neighbors only. By approximating the primary PnP controller with unitary gains, the model of leader-based secondary controller with the PI interface is established and the stability of the closed-loop MG is proven by Lyapunov theory.
 \end{itemize}
 
 Proofs of the closed-loop stability of proposed system for CDGUs and MG clusters exploits structured Lyapunov functions, the LaSalle invariance theorem and properties of graph Laplacians. Finally, theoretical results are demonstrated by hardware-in-loop tests.
 \end{abstract}

\newpage
\section{Introduction}
\label{sec:intro}

With the increasing penetration of renewable energies into modern electric systems, the concept of microgrid (MG) receives increasing attention from both electric industry and academia. One MG should be formed by interconnecting a number of renewable energy sources (RESes), energy storage systems (ESSes) and different types of loads, which can be realistic if the final user is able to generate, store, control, and manage part of the energy that it will consume \cite{guerrero2011hierarchical,7995119}. Power converters are the key components applied in both ac and dc MGs to interface different sorts of energy resources and loads into the system. To be specific, in ac MG, power converters can be classified into grid-forming and grid-feeding converters \cite{6200347}, and the same classification can also be applied for dc MGs. While remarkable progress has been made in improving the performance of ac MGs during the past decade, dc MGs (which are studied in this paper) have been recognized as more and more attractive due to higher efficiency, more natural interface to many types of RESes and ESSes \cite{dragicevic2015dc}. 

Grid-forming converters can be seen as the interface between ESSes and the system to provide voltage support in the dc MG. In order to achieve simultaneous voltage support and communication-less current sharing among ESSes, voltage-current (V-I) droop control \cite{guerrero2011hierarchical} is widely adopted by imposing virtual impedance for the output voltages, but voltage deviations and current sharing errors still exist due to different line impedances. Meanwhile, another key challenge is that the stability of connected ESSes is sensitive to the chosen virtual impedances which should be designed taking the specific MG topology and the values of line impedances into consideration \cite{7946266, shafiee2014hierarchical,6816073}. In addition, the droop controller combined with inner voltage-current control loop forms the decentralized primary control level in which at least five control coefficients must be designed \cite{guerrero2011hierarchical}. Recently, an alternative class of decentralized primary controllers, called PnP controller according to the terminology used in \cite{Riverso2014a, 7040312}, has been proposed in \cite{7419890}. PnP controllers form a decentralized control architecture where each regulator can be synthesized using information about the corresponding ESSes \cite{7934339} or at most, parameters of the power lines connected to the ESS \cite{7419890}. In particular, the latter pieces of information are not required in the design procedure of \cite{7934339} which is therefore termed line-independent method. The main feature of the PnP controller is to preserve the global stability of the whole MG independently of the MG topology. Moreover, when ESSes are plugged-in/out of the system, local controllers can be designed on the fly, without knowing the model of other ESSes and yet preserving global stability of the new MG. However, in both \cite{7419890} and \cite{7934339}, the synthesis of a PnP controller requires to solve a convex optimization problem, if unfeasible, the plug-in/out of corresponding ESSes should be denied. 

The proposed controllers in \cite{7419890,7934339} are only applied for grid-forming converters. However, grid-feeding converters for CDGUs should be also considered when RESes such as PV source are joined in dc MGs. The current-based PnP controller should be designed for grid-feeding converters to track current reference given by e.g. maximum power point tracking (MPPT) algorithm. Meanwhile, the current stabilization should also be guaranteed. In \cite{zhao2015distributed}, a current-based PI primary droop control is proposed considering the constant current load, however, if the current reference and the constant current load are different, the voltage deviations can become large. In addition, while several literature \cite{7015592,7546855,6497633} considered the problem of energy management operation between RESes and ESSes, the global stability problem about MG and MG clusters has always been ignored from the point view of system level.
 
In this paper, main contributions are concluded as follows:
\begin{enumerate}[(i)]
	\item Considering the grid-feeding converters in single MG, the current-based PnP controller is proposed for CDGUs to achieve current tracking. In order to guarantee the current stability of the MG joined by CDGUs, the control coefficients of each controller only need to fulfill simple inequalities. Hence, different from the method in \cite{7419890,7934339}, no optimization problem need to be solved for designing local regulators which means the design of stabilizing regulators is always feasible independent of system parameters.
	\item Considering the MG clusters interconnected with MGs composed of grid-forming/feeding converters, a PnP voltage/current controller is proposed for the system to achieve both the voltage and current tracking simultaneously. The set of control coefficients is characterized explicitly through a set of inequalities. Hence, the controller design is always feasible and does not require to solve an optimization problem. It is proven that the global stability can be guaranteed by implementing PnP controller for each MG, which is independent of line impedances.
	\item As in \cite{7934339}, the proofs of closed-loop asymptotic stability of using the proposed controller for MGs and MG clusters exploit structured Lyapunov functions, the LaSalle invariance theorem and properties of graph Laplacians. This shows that these tools offer a feasible theoretical framework for analyzing different kinds of MGs equipped with various types PnP decentralized control architectures. 
	\item  For MG clusters, a leader-based voltage/current distributed secondary controller is proposed to achieve both the voltage and current tracking with the information from the higher control level. Each MG only requires its own information and the information of its neighbours on the communication network graph. Instead of implementing only integral controller as the interface between primary and secondary control level, PI controller is applied as the interface to improve the dynamic control performance. By approximating the primary PnP controller with unitary gains, the model of leader-based secondary controller with the PI interface is established whose stability is proven by Lyapunov theory. 
\end{enumerate} 

The paper is structured as follows. In Section \ref{sec:Model_C} and \ref{sec:aug_sys}, the CDGU model and proposed current-based PnP controllers are introduced. In Section \ref{sec:pnp_design}, the closed-loop stability for CDGU is proven. In Section \ref{PV/Battery_Model} and \ref{sec:aug_sys_for_V_C}, the proposed voltage/current PnP controller for MGs are introduced. In Section \ref{sec:pnp_design_modules}, the closed-loop stability for MG clusters is proven. The leader-based voltage/current distributed secondary controller and its stability proof are introduced in Section \ref{leader-based controller}. Finally, the hardware-in-loop tests are described in Section \ref{sec:simulation_results}.

\textbf{Notation.} We use $P>0$ (resp. $P\geq 0$) for indicating the
real symmetric matrix $P$ is positive-definite
(resp. positive-semidefinite). Let
	$A\in\mathbb{R}^{n\times m}$ be a
	matrix inducing the linear map $A:\mathbb{R}^m\rightarrow \mathbb{R}^n$. $I\in\mathbb{R}^{n\times n}$ represent unit matrix.
	The average of a vector $v\in\mathbb{R}^n$ is $ \left\langle v \right\rangle  =\frac{1}{n}\sum v_{i}$. We denote with $H^1$ the subspace composed by all vectors with zero average i.e. $H^1=\{v\in\mathbb{R}^n: \left\langle v \right\rangle=0\}$. The space orthogonal to $H^1$ is $H_{\perp}^1$. It holds $H_{\perp}^1=\{\alpha\mbf{1_n}: \alpha\in\mathbb{R}\}$ and $dim(H_{\perp}^1)=1$ \cite{ferrari2006analysis}. Moreover, the decomposition $\mathbb{R}^n=H^1\oplus H_{\perp}^1$ is direct \cite{callier2012linear}.
\section{ Grid-Feeding Converters of Current-controlled DGUs in dc Microgrid}
          \label{sec:Model_C}      
\subsection{ Electrical model of CDGUs}
\label{sec:el_model_C}
In this subsection, the electrical model for CDGUs is described. The control objective for CDGU is to feed current for the MG according to a given current reference. The electrical scheme of the $i$-th CDGU is represented within upper part of Fig. \ref{fig:ctrl_part_C}. It is assumed that loads including both a resistive load and a current disturbance($I_{Li}$) are unknown. 

We consider a system composed of $N$ CDGUs and define the set $\DD^C=\{1,\dots,N\}$. Two CDGUs are neighbors if there is a
power line connecting them.
$\NN_{i}^C\subset\DD^C$ denotes the subset of neighbors of CDGU
$i$. The neighboring relation is symmetric which means
$j\in\NN_{i}^C$ implies
$i\in\NN_{j}^C$. Furthermore, let $\mathcal
E=\{(i,j):$ $i\in\DD^C,$ $j\in\NN_{i}^C\}$ collect unordered pairs of
indices associated to lines. Each line is described by a $RL$ model.
The topology of the multiple CDGUs is then described by the
undirected graph $\GG_{el}$ with nodes $\DD^C$ and
edges $\EE$.

From Fig. \ref{fig:ctrl_part_C}, by applying Kirchoff's voltage and current laws, and exploiting QSL approximation
of power lines \cite{7419890,8026170}, the model of CDGU $i$ is obtained
\begin{equation}
	\label{eq:CDGU}
	\text{CDGU}~i:\hspace{-4mm}\quad\left\lbrace
	\begin{aligned}
		\frac{dV_{i}}{dt} &= \frac{1}{C_{ti}}I_{ti}^C+\sum\limits_{j\in\NN_i}\left(\frac{V_j}{C_{ti} R_{ij}}-\frac{V_i}{C_{ti}R_{ij}}\right)-\frac{1}{C_{ti}}(I_{Li}+\frac{V_i}{R_{Li}})\\
		\frac{dI_{ti}^C}{dt} &= -\frac{1}{L_{ti}^C}V_{i}-\frac{R_{ti}^C}{L_{ti}^C}I_{ti}^C+\frac{1}{L_{ti}^C}V_{ti}^C\\
	\end{aligned}
	\right.
\end{equation}
where variables $V_i$, $I_{ti}^C$, are the $i$-th PCC voltage and
filter current, respectively, $V_{ti}^C$ represents the command to the
converter, and $R_{ti}^C$, $L_{ti}^C$ and $C_{ti}$ represent the electrical
parameters of converters. Moreover, $V_{j}$ is the voltage at the
PCC of each neighboring CDGU $j\in\NN_{i}^C$ and $R_{ij}$ is the resistance
of the power line connecting CDGUs $i$ and $j$. 
\begin{figure}
	\centering
	\includegraphics[scale=0.8]{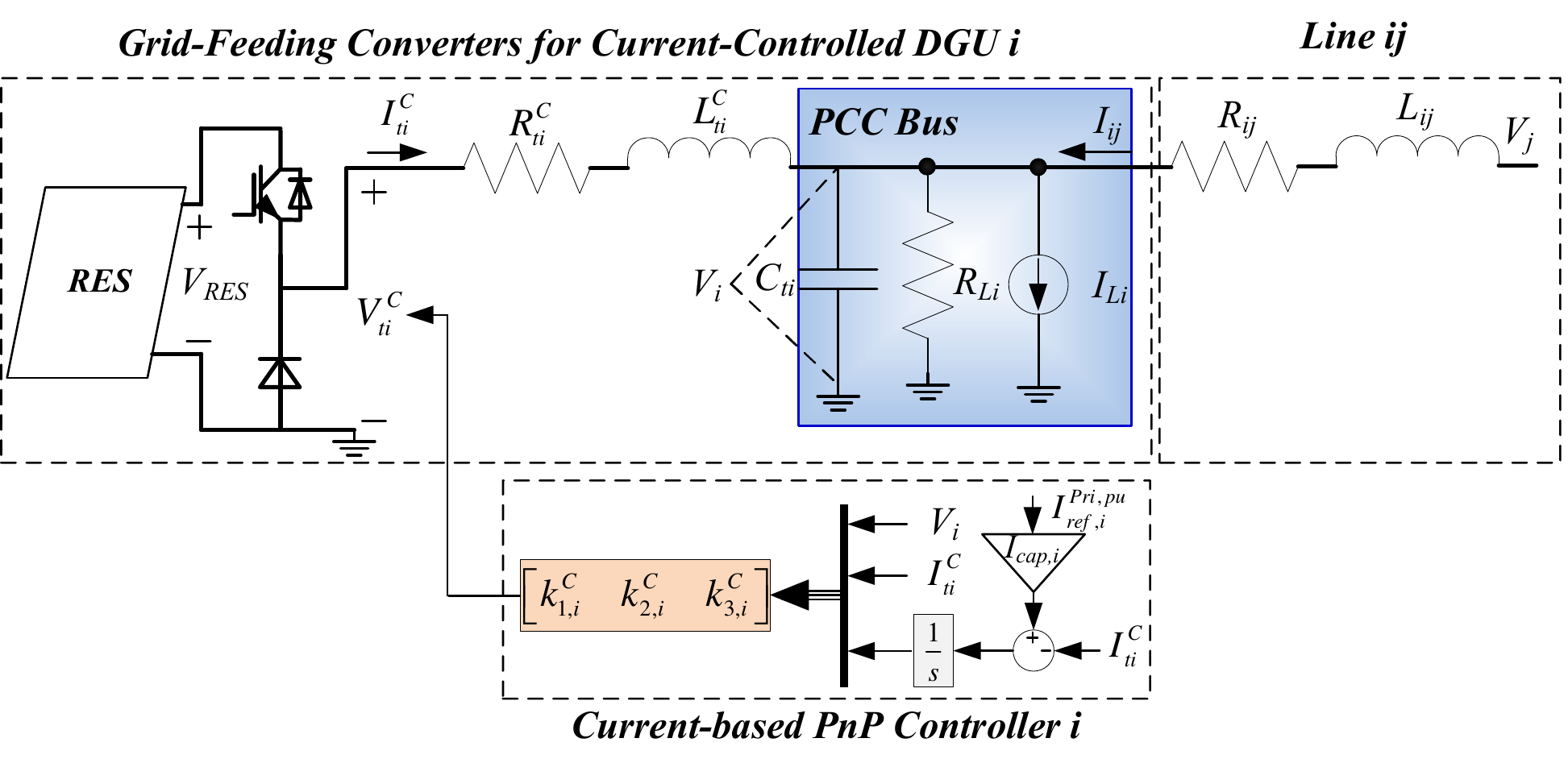}
	\caption{Electrical Scheme of CDGU $i$ and current-based PnP controller.}
	\label{fig:ctrl_part_C}
\end{figure}
\begin{rmk}
	\label{capacitor_added}
	 In practical, the grid-feeding converters need the voltage support from the grid-forming converters at the PCC point. In this section, only the controller and stability for the interconnected CDGU is designed and analyzed. Thus, it is assumed that the voltage at the PCC point has already been supported by the grid-forming devices. In section \ref{PV/Battery_Model} and \ref{sec:PnPctr_for_V_Cl}, the PnP controllers to achieve both the voltage support and current feeding are proposed, designed and analyzed. 
\end{rmk}
\subsection{State-space model of multiple CDGUs}
\label{sec:State-space model of the mG with CDGU}
Dynamics \eqref{eq:CDGU} provides the state-space equations: 
\begin{equation*}
	\text{ $\subss{\Sigma}{i}^{CDGU}:$}\left\lbrace
	\begin{aligned}
		\subss{\dot{x}}{i}^C(t) &= A_{ii}^C\subss{x}{i}^C(t) +
		B_{i}^C\subss{u}{i}^C(t)+M_{i}^C\subss{d}{i}^C(t)+\subss\xi{i}^C(t)+A_{load,i}^C\subss{x}{i}^C(t)\\
		\subss{z}{i}^C(t)       &= H_{i}^C\subss{x}{i}^C(t)\\
	\end{aligned}
	\right.
\end{equation*}
where $\subss{x}{i}^C=[V_{i},I_{ti}^C]^T$ is the state,
$\subss{u}{i}^C = V_{ti}^C$ the control input,
$\subss{d}{i}^C = I_{Li}^C$ the exogenous input including different current loads and
$\subss{z}{i}^C = I_{ti}^C$ the controlled variable of the
system. 
The
term $\subss\xi i^C=\sum_{j\in\NN_i}A_{ij}^C(\subss x j^C-\subss x i^C)$ accounts
for the coupling with each CDGU $j\in\NN_{i}^C$ and the term $A_{load,i}^C$ accounts for the resistive load for each CDGU.
The matrices of
$\subss{\Sigma}{i}^{CDGU}$ are obtained from
\eqref{eq:CDGU} as:
\begin{equation*}
	\renewcommand\arraystretch{1.5}
	A_{ii}^C=\begin{bmatrix}
		0 & \frac{1}{C_{ti}} \\
		-\frac{1}{L_{ti}^C} & -\frac{R_{ti}^C}{L_{ti}^C} \\
	\end{bmatrix},  \hspace{3mm} A_{load,i}^C=
	\begin{bmatrix}
	-\frac{1}{R_{Li}C_{ti}}  & 0 \\
	0 & 0 
	\end{bmatrix}, \hspace{3mm} A_{ij}^C=
	\begin{bmatrix}
		\frac{1}{R_{ij}C_{ti}}  & 0 \\
		0 & 0 
	\end{bmatrix},
\end{equation*}
\begin{equation*}
	B_{i}^C=\begin{bmatrix}
		0 \\
		\frac{1}{L_{ti}^C}
	\end{bmatrix},
	\qquad
	M_{i}^C=\begin{bmatrix}
		-\frac{1}{C_{ti}} \\
		0 \\
	\end{bmatrix},
	\qquad
	H_{i}^C=\begin{bmatrix}
		0 & 1 
	\end{bmatrix}.
\end{equation*}
\begin{rmk}
To be emphasized, there are two main differences between the proposed model for CDGU in \eqref{eq:CDGU} and the one proposed in \cite{7934339}. The first one is that the resistive load is considered as part of the load. The second one is that the control variable is changed from voltage in \cite{7934339} for grid-forming converters to current in \eqref{eq:CDGU} for grid-feeding converters.
\end{rmk}

The overall model with multiple CDGUs is given by
\begin{equation}
	\begin{aligned}
		\label{eq:stdformA}
		\mbf{\dot{x}^C}(t) &= \mbf{A^Cx^C}(t) + \mbf{B^Cu^C}(t)+ \mbf{M^Cd^C}(t)\\
		\mbf{z}^C(t)       &= \mbf{H^Cx^C}(t)
	\end{aligned}
\end{equation}
where $\mbf {x^C} = (\subss x 1^C,\ldots,\subss x
N^C)\in\Rset^{2N}$, $\mbf {u^C} = (\subss u 1^C,\ldots,\subss u
N^C)\in\Rset^{N}$, $\mbf {d^C} = (\subss d 1^C,\ldots,\subss d
N^C)\in\Rset^{N}$, 
$\mbf {z^C} = (\subss z 1^C,\ldots,\subss z
N^C)\in\Rset^{N}$. Matrices $\mbf{A^C}$, $\mbf{B^C}$, $\mbf
{M^C}$ and $\mbf {H^C}$ are reported in Appendix \ref{sec:AppMatrices_CDGU}. 

     \section{Design of stabilizing current controllers}
         \label{sec:PnPctrl}
        \subsection{Structure of current-based PnP controllers}
        \label{sec:aug_sys}
        In
        order to track with references $\mbf{z_{ref}^C}(t)$, when
        $\mbf{d^C}(t)=\mbf{\bar d^C}$ is constant, the CDGU model is augmented
        with integrators \cite{Skogestad1996}. A
        necessary condition for making error $\mbf{e^C}(t)=\mbf{z_{ref}^C}(t)-\mbf{z^C}(t)$ equal to zero as $t\rightarrow\infty$, is that, there are equilibrium states and inputs $\mbf{\bar
        	x^C}$ and $\mbf{\bar u^C}$ verifying \eqref{eq:stdformA}. The existence of these equilibrium points can be shown following the proof of
        Proposition 1 in \cite{7419890}.
        
        One obtain the integrator dynamics is (as shown in Fig.
        \ref{fig:ctrl_part_C}, setting $z_{ref_{[i]}}^C=I_{ref,i}^{Pri,pu}*I_{cap,i}$, $I_{cap,i}$ is the maximum capability of CDGU $i$ and $I_{ref,i}^{Pri,pu}$ is the p.u. reference)
        \begin{equation}
        	\begin{aligned}
        		\subss{\dot{v}}{i}^C(t) = \subss{e}{i}^C(t) &= \subss{z_{ref}}{i}^C(t)-\subss{z}{i}^C(t) \\
        		&= \subss{z_{ref}}{i}^C(t)-H_{i}^C\subss{x}{i}^C(t),
        	\end{aligned}
        	\label{eq:intdynamics_C}
        \end{equation}
        and hence, the augmented CDGU model is
        \begin{equation}
        	\label{eq:modelDGUgen-aug}
        	\subss{\hat{\Sigma}}{i}^{CDGU} :
        	\left\lbrace
        	\begin{aligned}
        		\subss{\dot{\hat{x}}}{i}^C(t) &= \hat{A}_{ii}^C\subss{\hat{x}}{i}^C(t) + \hat{B}_{i}^C\subss{u}{i}^C(t)+\hat{M}_{i}^C\subss{\hat{d}}{i}^C(t)+\subss{\hat\xi}i^C(t)+\hat{A}_{load,i}^C\subss{\hat{x}}{i}^C(t)\\
        		\subss{z}{i}^C(t)       &= \hat{H}_{i}^C\subss{\hat{x}}{i}^C(t)
        	\end{aligned}
        	\right.
        \end{equation}
        where $\subss{\hat{x}}{i}^C=[[\subss{x}{i}^C]^T
        ,\subss{v}{i}^C]^T\in\Rset^3$ is the state,
        $\subss{\hat{d}}{i}^C=[\subss{d}{i}^C,\subss{z_{ref}}{i}^C]^T\in\Rset^2$
        collects the exogenous signals and
        $\subss{\hat\xi}{i}^C=\sum_{j\in\NN_i}\hat{A}_{ij}^C(\subss{\hat{x}}{j}^C-\subss{\hat{x}}{i}^C)$. By direct calculation, the  matrices appeared in \eqref{eq:modelDGUgen-aug} are as follows
        \begin{equation*}
        	\begin{aligned}
        		\hat{A}_{ii}^C &=\begin{bmatrix}
        			A_{ii}^C & 0\\
        			-H_{i}^C & 0
        		\end{bmatrix},\hspace{2mm}
        		\hat{A}_{ij}^C=\begin{bmatrix}
        			A_{ij}^C &0\\
        			0&0
        		\end{bmatrix},\hspace{2mm}
        		\hat{A}_{load,i}^C=\begin{bmatrix}
        		A_{load,i}^C &0\\
        		0&0
        		\end{bmatrix},\\
        		\hat{B}_{i}^C&=\begin{bmatrix}
        			B_{i}^C\\
        			0
        		\end{bmatrix},\hspace{2mm}
        		\hat{M}_{i}^C=\begin{bmatrix}
        			M_{i}^C & 0 \\
        			0 & 1
        		\end{bmatrix},\hspace{2mm}
        		\hat{H}_{i}^C=\begin{bmatrix}
        			H_{i}^C & 0
        		\end{bmatrix}.
        	\end{aligned}
        \end{equation*}   
        
        Based on Proposition 2 of \cite{7419890}, the pair $(\hat{A}_{ii}^C,\hat{B}_{i}^C)$ can be proven to be controllable. Hence,
        system \eqref{eq:modelDGUgen-aug} can be stabilized.
        
        Given from \eqref{eq:modelDGUgen-aug}, the overall augmented system is 
        \begin{equation}
        	\label{eq:sysaugoverall_1}
        	\left\lbrace
        	\begin{aligned}
        		\mbf{\dot{\hat{x}}^C}(t) &= \mbf{\hat{A}^C\hat{x}^C}(t) + \mbf{\hat{B}^Cu^C}(t)+ \mbf{\hat{M}^C\hat{d}^C}(t)\\
        		\mbf{z}^C(t)       &= \mbf{\hat{H}^C\hat{x}^C}(t)
        	\end{aligned}
        	\right.
        \end{equation}
        where $\mbf{\hat{x}^C}$ and $\mbf{\hat{d}^C}$ include all variables $\subss{\hat{x}}{i}^C$ and $\subss{\hat{d}}{i}^C$ respectively from all the CDGUs, and matrices $\mbf{\hat{A}^C}, \mbf{\hat{B}^C}, \mbf{\hat{M}^C}$ and $\mbf{\hat{H}^C}$ are derived from systems \eqref{eq:modelDGUgen-aug}. 
        
        Now each CDGU $\subss{\hat{\Sigma}}{i}^{CDGU}$ is equip with the following state-feedback controller
        \begin{equation}
        	\label{eq:ctrldec}
        	\subss{\CC}{i}^C:\qquad \subss{u}{i}^C(t)=K_{i}^C\subss{\hat{x}}{i}^C(t)
        \end{equation}
        where $K_{i}^C=\left[k_{1,i}^C\text{ }k_{2,i}^C\text{ }k_{3,i}^C\right]\in\Rset^{1\times3}$. 
        
        It turns out that, together with the integral action \eqref{eq:intdynamics_C}, controllers $\subss{\CC}{i}^C$, define a multivariable PI regulator, see lower part of Fig.~\ref{fig:ctrl_part_C}. In particular, the overall control architecture is
        decentralized since the computation of
        $\subss{u}{i}^C$ requires the state of
        $\subss{\hat{\Sigma}}{i}^{CDGU}$ only. In the following, it is shown that structured Lyapunov functions can be used to ensure asymptotic stability of the system with multiple CDGUs with controllers \eqref{eq:ctrldec}.
        
	  \subsection{Conditions for stability of the closed-loop multiple CDGUs}
           \label{sec:pnp_design}
         As in \cite{7934339}, the design of gain $K_{i}^C$ hinges on the use of separable local Lyapunov function for certifying the closed-loop stability. Indeed, the structure will also allow us to show that local stability implies stability of the whole system. Here after, the candidate Lyapunov function are considered as
       	\begin{equation}
       	\label{eq_sep_lyap}
       	V_{i}^C(\subss{\hat{x}}{i}^C)=[\subss{\hat{x}}{i}^C]^TP_{i}^C\subss{\hat{x}}{i}^C
       	\end{equation}
       	where positive definite matrices $P_{i}^C\in\Rset^{3\times3}$ has the structure
       	\begin{equation}
       	\label{eq:pstruct}
       	P_{i}^C=\left[
       	\renewcommand\arraystretch{1.5}
       	\begin{array}{c|c}
       	\eta_{i} & \mbf{0}_{1\times 2} \\
       	\hline
       	\mbf{0}_{2\times 1}  & \PP_{22,i}^C\\
       	\end{array}\right],
       	\end{equation}
       	where $\eta_{i}>0$ is a parameter and the entries of $\PP_{22,i}^C$ are arbitrary and denoted as
       	        \begin{equation}
       	        	\label{eqn:P22}
       	        	\PP_{22,i}^C=
       	        	\left[ \begin{array}{cc}
       	        		p_{22,i}^C & p_{23,i}^C  \\
       	        		p_{23,i}^C & p_{33,i}^C
       	        	\end{array}\right] .
       	        \end{equation}
       	We also assume that given a constant parameter common to
       	all CDGUs $\bar\sigma>0$ just for proof process, the parameters $\eta_{i}$ in \eqref{eq:pstruct} are set as
       	\begin{equation}
       	\label{eq:equal_ratio}
       	\eta_{i} = \bar\sigma C_{ti}
       	\hspace{7mm}  i\in\DD^C.
       	\end{equation}
       
        In absence of coupling terms $\subss{\hat\xi}i^C(t)$, and load terms $\hat{A}_{load,i}^C\subss{\hat{x}}{i}^C(t)$, one
        would like to stabilize the closed-loop CDGU 
        \begin{equation}
        		\label{eq:modelDGUgen-aug-closed}
        		\subss{\dot{\hat{x}}}{i}^C(t) = \underbrace{(\hat{A}_{ii}^C+\hat{B}_{i}^CK_i^C)}_{F_{i}^C}\subss{\hat{x}}{i}^C(t)+\hat{M}_{i}^C\subss{\hd}{i}^C(t).\\
        \end{equation}
        
        By direct calculation, one has 
        \begin{equation}
		\begin{aligned}
        			 \label{eq:Fi}
        			 \renewcommand\arraystretch{3}
        			F_{i}^C=\left[
        			\renewcommand\arraystretch{1.8}
        			\begin{array}{c|cc}
        				0 & \frac{1}{C_t} & 0 \\
        				\hline
        				\frac{(k_{1,i}^C-1)}{L_{ti}^C} & \frac{(k_{2,i}^C-R_{ti}^C)}{L_{ti}^C} & \frac{k_{3,i}^C}{L_{ti}^C}\\
        				0 & -1 & 0
        			\end{array}\right]=\left[
        			\renewcommand\arraystretch{1.5} \begin{array}{c|cc}
        				0 & \FF_{12,i}^C \\
        				\hline
        				\FF_{21,i}^C& \FF_{22,i}^C
        			\end{array}\right].
        	\end{aligned}
        \end{equation}
        
        From Lyapunov theory, asymptotic stability of \eqref{eq:modelDGUgen-aug-closed} can be certified by the existence of a Lyapunov function as shown in \eqref{eq_sep_lyap} and 
        \begin{equation}
       	\label{eq:Lyapeqnith}
        	Q_{i}^C = [F_{i}^C]^T
        	P_{i}^C+P_{i}^CF_{i}^C 
        \end{equation}
        is negative definite. 
%
    
    Based on \eqref{eq:pstruct} and \eqref{eq:Fi}, eq. \eqref{eq:Lyapeqnith} can be rewritten as
    \begin{equation}
    \label{fullQ}
    Q_{i}^C=\left[
    \renewcommand\arraystretch{1.5} \begin{array}{c|cc}
    0 & [\FF_{21,i}^C]^T\PP_{22,i}^C+\eta_{i}\FF_{12,i}^C \\
    \hline
    [\FF_{12,i}^C]^T\eta_{i}+\PP_{22,i}^C\FF_{21,i}^C& [\FF_{22,i}^C]^T\PP_{22,i}^C+\PP_{22,i}^C\FF_{22,i}^C
    \end{array}\right]
    \end{equation}

        The next result shows that, Lyapunov theory certifies, at most, marginal
        stability of \eqref{eq:modelDGUgen-aug-closed}.
        
        Firstly, we recall the following elementary properties of the positive definite matrix $P_{i}^C$ and the negative semi-definite matrix $Q_{i}^C$.
        \begin{prop}
        	\label{pr:prop_Q_semi}
        	\cite{7934339} If $Q=Q^T\leq0$ and an element $q_{ii}$ on the diagonal verified $q_{ii}=0$, then
        	\begin{enumerate}[(i)]
        		\item The matrix $Q$ cannot be negative definite.
        		\item The $i$-th row and column have zero entries.
        	\end{enumerate}  
        \end{prop}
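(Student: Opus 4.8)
The plan is to read off both claims directly from the definition of negative (semi-)definiteness, by evaluating the quadratic form $x^T Q x$ on suitably chosen test vectors and exploiting the symmetry $q_{ij}=q_{ji}$ throughout. For part (i) I would simply evaluate on the $i$-th standard basis vector $e_i$: since $e_i^T Q e_i = q_{ii} = 0$ while $e_i \neq 0$, the form fails to be strictly negative at a nonzero point, so $Q$ cannot be negative definite. This step is immediate and requires nothing beyond the hypothesis $q_{ii}=0$.

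For part (ii) I would fix an arbitrary index $j \neq i$ and test $Q$ on the one-parameter family $x(t) = e_i + t\, e_j$ with $t \in \mathbb{R}$. Using symmetry and $q_{ii}=0$, this gives $x(t)^T Q x(t) = 2t\, q_{ij} + t^2 q_{jj} \leq 0$ for every real $t$. Because $Q \leq 0$ also forces the diagonal entry $q_{jj} \leq 0$, I would split into two cases. If $q_{jj}=0$, the inequality collapses to $2t\, q_{ij} \leq 0$ for all $t$, which can only hold when $q_{ij}=0$. If $q_{jj}<0$, the scalar quadratic in $t$ attains its maximum at $t^\star = -q_{ij}/q_{jj}$ with value $-q_{ij}^2/q_{jj} = q_{ij}^2/(-q_{jj}) \geq 0$; comparing this with the required $\leq 0$ again forces $q_{ij}=0$. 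In either case $q_{ij}=0$, and by symmetry $q_{ji}=0$ as well. Since $j$ was arbitrary, the whole $i$-th row and column of $Q$ vanish, which is the assertion.

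The argument is entirely elementary, so I do not expect any substantial obstacle; the only point requiring a little care is the sign case-split on $q_{jj}$ in part (ii). I would note that this case-split can be sidestepped altogether by a principal-minor argument: every $2\times 2$ principal submatrix $\begin{bmatrix} 0 & q_{ij} \\ q_{ij} & q_{jj} \end{bmatrix}$ of a negative semi-definite matrix is itself negative semi-definite, hence its determinant $-q_{ij}^2$ must be nonnegative, forcing $q_{ij}=0$ directly. Either route yields the result with no analytical difficulty, which is exactly why this serves as a convenient lemma for the subsequent marginal-stability discussion of \eqref{eq:modelDGUgen-aug-closed}.
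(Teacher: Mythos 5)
Your proof is correct. Note that the paper itself gives no proof of this proposition---it is imported verbatim from \cite{7934339}---so there is nothing internal to compare against; your argument (testing the quadratic form on $e_i$ for part (i) and on $e_i + t\,e_j$ for part (ii), or equivalently invoking the $2\times 2$ principal-minor determinant $-q_{ij}^2 \geq 0$) is the standard elementary derivation and validly fills in the omitted details, including the necessary sign case-split on $q_{jj}$.
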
        
\begin{prop}
	\label{pr:pr_P_Q}
	Matrices $P_{i}^C>0$ and $Q_{i}^C\leq0$ verifying \eqref{eq:pstruct} and \eqref{fullQ} have the following structure:
	\begin{equation}
	P_{i}^C=\label{eq:pi_diagonal}
            \left
            [ \renewcommand\arraystretch{1.3}
            \begin{array}{c|cc}
                \eta_{i} & 0 & 0 \\
                      \hline
                 0 & p_{22,i}^C & 0\\
                 0 & 0 & \frac{k_{3,i}^C}{L_{ti}^C} p_{22,i}^C\\
                   \end{array}\right],
                   \hspace{5mm}
                   Q_{i}^C=\left
                   [ \renewcommand\arraystretch{1.6}
                   \begin{array}{c|c|c}
                   0 & 0 & 0 \\
                   \hline
                   0 & 2\frac{(k_{2,i}^C-R_{ti}^C)}{L_{ti}^C} p_{22,i}^C & 0\\
                   \hline\
                   0 & 0 & 0\\
                   \end{array}\right],
   	\end{equation}
   	Moreover, for having $P_{i}^C>0$, $Q_{i}^C\leq0$ and $Q_{i}^C\neq0$, the control coefficients must verify
   	\begin{equation}
   	\label{CDGU_set}
   		\left\lbrace
   		\begin{aligned}
   		k_{1,i}^C&<1\\
   		k_{2,i}^C&<R_{ti}^C\\
   		k_{3,i}^C&>0
  		\end{aligned}
  		\right.	
   	\end{equation}
\end{prop}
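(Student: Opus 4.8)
The plan is to exploit the negative semidefiniteness of $Q_i^C$ together with the structural lemma in Proposition~\ref{pr:prop_Q_semi}, applied \emph{twice}, to force the off-diagonal entries of $P_i^C$ to vanish and to pin down the sign conditions \eqref{CDGU_set}. First I would observe from \eqref{fullQ} that the $(1,1)$ entry of $Q_i^C$ is identically zero. Since $Q_i^C=(Q_i^C)^T\le 0$, Proposition~\ref{pr:prop_Q_semi}(ii) forces the entire first row and column of $Q_i^C$ to vanish, i.e.
\begin{equation*}
[\FF_{21,i}^C]^T\PP_{22,i}^C+\eta_{i}\FF_{12,i}^C=\mbf{0}_{1\times 2}.
\end{equation*}
Writing out the two scalar equations, using $\FF_{12,i}^C=[\tfrac{1}{C_{ti}}\ 0]$ and $\FF_{21,i}^C=[\tfrac{k_{1,i}^C-1}{L_{ti}^C}\ 0]^T$ together with $\eta_i=\bar\sigma C_{ti}$ from \eqref{eq:equal_ratio}, the first component yields $\tfrac{k_{1,i}^C-1}{L_{ti}^C}p_{22,i}^C=-\bar\sigma$ and the second yields $\tfrac{k_{1,i}^C-1}{L_{ti}^C}p_{23,i}^C=0$. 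Because $P_i^C>0$ gives $p_{22,i}^C>0$ while $\bar\sigma,L_{ti}^C>0$, the first equation forces $k_{1,i}^C<1$; since then $k_{1,i}^C-1\ne 0$, the second forces $p_{23,i}^C=0$, so $\PP_{22,i}^C$ is diagonal.

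Next I would substitute $p_{23,i}^C=0$ into the lower-right $2\times2$ block $[\FF_{22,i}^C]^T\PP_{22,i}^C+\PP_{22,i}^C\FF_{22,i}^C$ of \eqref{fullQ}. A direct computation shows that its $(2,2)$ entry (the $(3,3)$ entry of $Q_i^C$) equals $2\tfrac{k_{3,i}^C}{L_{ti}^C}p_{23,i}^C$, which now vanishes. Applying Proposition~\ref{pr:prop_Q_semi}(ii) a second time to this zero diagonal entry kills the third row and column of $Q_i^C$; the surviving off-diagonal entry of the block gives $\tfrac{k_{3,i}^C}{L_{ti}^C}p_{22,i}^C-p_{33,i}^C=0$, i.e. $p_{33,i}^C=\tfrac{k_{3,i}^C}{L_{ti}^C}p_{22,i}^C$. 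At this point $P_i^C$ and $Q_i^C$ have exactly the sparse forms claimed in \eqref{eq:pi_diagonal}, the only nonzero entry of $Q_i^C$ being $q_{22}=2\tfrac{k_{2,i}^C-R_{ti}^C}{L_{ti}^C}p_{22,i}^C$.

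Finally I would read off the inequalities. Positive definiteness of the (now diagonal) $P_i^C$ requires $p_{33,i}^C>0$; since $p_{22,i}^C>0$ and $L_{ti}^C>0$, this is equivalent to $k_{3,i}^C>0$. Negative semidefiniteness with $Q_i^C\ne 0$ requires the single entry $q_{22}<0$, i.e. $k_{2,i}^C<R_{ti}^C$. Combined with $k_{1,i}^C<1$ from the first step, this establishes \eqref{CDGU_set}.

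The only nontrivial ingredient is the structural lemma Proposition~\ref{pr:prop_Q_semi}(ii): the crux is recognizing that it can be invoked twice, once on the a-priori zero $(1,1)$ entry and once on the $(3,3)$ entry, which collapses to zero only \emph{after} $p_{23,i}^C$ has been eliminated. Everything else reduces to the block multiplications already displayed in \eqref{fullQ} and to sign bookkeeping driven by $\eta_i=\bar\sigma C_{ti}>0$ and $p_{22,i}^C>0$. I expect no genuine obstacle beyond carefully tracking that the positivity of $p_{22,i}^C$ and $\eta_i$ is precisely what fixes the direction of each inequality.
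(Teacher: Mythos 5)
Your proposal is correct and follows essentially the same route as the paper: apply Proposition~\ref{pr:prop_Q_semi}(ii) to the identically zero $(1,1)$ entry of \eqref{fullQ} to force the first row/column of $Q_i^C$ to vanish (yielding $k_{1,i}^C<1$ and $p_{23,i}^C=0$), then apply it again to the zero $(3,3)$ entry to force $p_{33,i}^C=\tfrac{k_{3,i}^C}{L_{ti}^C}p_{22,i}^C$, and finally read off $k_{3,i}^C>0$ from $P_i^C>0$ and $k_{2,i}^C<R_{ti}^C$ from $Q_i^C\le 0$, $Q_i^C\ne 0$. Your write-up is in fact slightly more careful than the paper's in two spots — noting explicitly that $k_{1,i}^C\ne 1$ is what licenses $p_{23,i}^C=0$, and that the $(3,3)$ entry equals $2\tfrac{k_{3,i}^C}{L_{ti}^C}p_{23,i}^C$ before the substitution — but the argument is the same.
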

\begin{proof}
		Based on \eqref{eqn:P22} and \eqref{eq:Fi}, the upper right block of \eqref{fullQ} can be written as 
		\begin{equation}	
		[\FF_{21,i}^C]^T\PP_{22,i}^C+\eta_{i}\FF_{12,i}^C \label{eq:offdiagonal_Q}= 
		\left
		[ \renewcommand\arraystretch{1.3}
		\begin{array}{c|cc}
		\frac{(k_{1,i}^C-1)}{L_{ti}^C}p_{22,i}^C+\frac{1}{C_{ti}}\eta_{i} & \frac{(k_{1,i}^C-1)}{L_{ti}^C}p_{23,i}^C 
		\end{array}\right],
		\end{equation}
		Based on Proposition \ref{pr:prop_Q_semi}, \eqref{eq:offdiagonal_Q} should be equal to zero vector which means
		\begin{subequations}
		\label{eq:condition1}
		\begin{empheq}[left=\empheqlbrace]{align}
	     \label{eq:condition1_1}
         \frac{(k_{1,i}^C-1)}{L_{ti}^C}p_{22,i}^C &= -\frac{1}{C_{ti}}\eta_{i}\\
         \label{eq:condition1_2}
          \frac{(k_{1,i}^C-1)}{L_{ti}^C}p_{23,i}^C   &= 0
         \end{empheq}	
		\end{subequations}
		Because $\eta_{i}$ is positive, one has
		\begin{subequations}
	     \label{eq:condition_detail_1}
	     \begin{empheq} [left=\empheqlbrace]{align}
	     \label{eq:condition_detail_k1}
	     k_{1,i}^C &<1\\
	     \label{eq:condition_detail_p22}
	     p_{23,i}^C &=0
	     \end{empheq}
		\end{subequations}
		From \eqref{eq:condition_detail_1}, the lower right block of \eqref{fullQ} can be rewritten as
		 \begin{equation}
	    [\FF_{22,i}^C]^T\PP_{22,i}^C+\PP_{22,i}^C\FF_{22,i}^C \label{eq:diagonal_Q}= 
		 \left
		 [ \renewcommand\arraystretch{1.3}
		 \begin{array}{c|cc}
		 2\frac{(k_{2,i}^C-R_{ti}^C)}{L_{ti}^C} p_{22,i}^C & -p_{33,i}^C+\frac{k_{3,i}^C}{L_{ti}^C} p_{22,i}^C\\
		 \hline
		  -p_{33,i}^C+\frac{k_{3,i}^C}{L_{ti}^C} p_{22,i}^C & 0\\
		 \end{array}\right],
		 \end{equation}
		 Again from Proposition \ref{pr:prop_Q_semi}, the off diagonal entities of \eqref{eq:diagonal_Q} must be equal to zero which means
		 \begin{equation}
		 \frac{k_{3,i}^C}{L_{ti}^C} p_{22,i}^C \label{eq:condition_2}=p_{33,i}^C\\ 
		 \end{equation}
		 Furthermore, based on \eqref{eq:condition_detail_p22}, \eqref{eq:condition_2} and $P_{i}^C>0$
		 \begin{equation}
		 k_{3,i}^C \label{eq:condition_2.2}>0\\
		 \end{equation}
		 Finally, for verifying $Q_{i}^C\neq0$, one has
		 \begin{equation}
		 k_{2,i}^C \label{eq:condition_2.3}< R_{ti}^C\\
		 \end{equation}
		 Thus, the $P_{i}^C$ in \eqref{eq:pi_diagonal} can be derived by substituting \eqref{eq:condition_detail_p22} and \eqref{eq:condition_2} into \eqref{eq:pstruct} and then $Q_{i}^C$ in \eqref{eq:pi_diagonal} can be derived from \eqref{eq:diagonal_Q} and \eqref{eq:condition_2}, finally \eqref{eq:condition_detail_k1}, \eqref{eq:condition_2.3} and \eqref{eq:condition_2.2} consist of the set \eqref{CDGU_set} for control coefficients. 		 
\end{proof}
An immediate consequence of Proposition \ref{pr:pr_P_Q} is the following results which will be exploited for proving the stability of the whole system through the LaSalle theorem.
	\begin{lem}
 	\label{le:le_2}
 	Let $g_{i}(w_i) = w_i^T Q_{i}^C w_i$. Under the Proposition \ref{pr:pr_P_Q}, $\forall i\in\DD^C$, only vectors $\bar w_i$ in the form
 	\begin{equation*}
 	\bar{w}_i =\left[ \begin{array}{ccc}
 	\alpha_i&
 	0  & \beta_i
 	\end{array}\right]^T
 	\end{equation*} with $\alpha_i$, $\beta_i\in\Rset$, fulfill
 	\begin{equation}
 	\label{eq:vQCv=0}
	 g_i(\bar{w}_i) = \bar{w}_i^T Q_{i}^C \bar{w}_i=0.
 	\end{equation}	
\end{lem}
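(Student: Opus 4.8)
The plan is to leverage the explicit diagonal form of $Q_i^C$ already obtained in Proposition \ref{pr:pr_P_Q}, which reduces the lemma to a one-line sign analysis. First I would recall from \eqref{eq:pi_diagonal} that, once the control coefficients satisfy \eqref{CDGU_set}, every entry of $Q_i^C$ vanishes except the central diagonal entry, so that
\begin{equation*}
Q_{i}^C=\diag\left(0,\ 2\tfrac{(k_{2,i}^C-R_{ti}^C)}{L_{ti}^C}\,p_{22,i}^C,\ 0\right).
\end{equation*}

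Next I would write a generic test vector as $w_i=[a_i\ b_i\ c_i]^T$ and evaluate the quadratic form directly against this diagonal matrix, obtaining
\begin{equation*}
g_i(w_i)=w_i^T Q_i^C w_i = 2\tfrac{(k_{2,i}^C-R_{ti}^C)}{L_{ti}^C}\,p_{22,i}^C\; b_i^2 .
\end{equation*}
The decisive observation is that the scalar multiplying $b_i^2$ is \emph{strictly} negative: $L_{ti}^C>0$ is a physical inductance, $p_{22,i}^C>0$ since $P_i^C>0$ forces its diagonal entries to be positive, and $k_{2,i}^C<R_{ti}^C$ by \eqref{CDGU_set} gives $k_{2,i}^C-R_{ti}^C<0$. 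Hence $g_i(w_i)$ is a fixed negative multiple of $b_i^2$ alone, with no dependence on $a_i$ or $c_i$.

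From this it is immediate that \eqref{eq:vQCv=0} holds if and only if $b_i=0$, whereas $a_i$ and $c_i$ are left entirely unconstrained; renaming $\alpha_i=a_i$ and $\beta_i=c_i$ then recovers exactly the asserted form $\bar{w}_i=[\alpha_i\ 0\ \beta_i]^T$ with $\alpha_i,\beta_i\in\Rset$. I do not expect any real obstacle, as the result is essentially a restatement of the block structure of $Q_i^C$; the single point deserving care is that the inequality $k_{2,i}^C<R_{ti}^C$ is strict, since it is precisely this strictness (rather than $\le$) that makes $b_i=0$ a \emph{necessary} condition for the quadratic form to vanish, and hence pins down the null set exactly as claimed.
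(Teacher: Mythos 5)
Your proof is correct and takes essentially the same route as the paper, which states the lemma as an immediate consequence of Proposition \ref{pr:pr_P_Q}: the diagonal form of $Q_i^C$ in \eqref{eq:pi_diagonal} reduces the quadratic form to a strictly negative multiple of the middle coordinate squared, so it vanishes exactly when that coordinate is zero. Your explicit remark that the strict inequality $k_{2,i}^C<R_{ti}^C$ (coming from $Q_i^C\neq 0$) is what makes the vanishing condition \emph{necessary} is precisely the point the paper leaves implicit.
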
         
          Now the overall closed-loop model with multiple CDGUs is considered as 
            \begin{equation}
            	\label{eq:sysaugoverallclosed}
            	\left\lbrace
            	\begin{aligned}
            		\mbf{\dot{\hat{x}}}^C(t) &= (\mbf{\hat{A}^C+\hat{B}^CK^C})\mbf{\hat{x}}^C(t)+ \mbf{\hat{M}^C\hd^C}(t)\\
            		\mbf{z}^C(t)       &= \mbf{\hat{H}^C\hat{x}^C}(t)
            	\end{aligned}
            	\right.
            \end{equation}
            obtained by combining \eqref{eq:sysaugoverall_1} and \eqref{eq:ctrldec}, with
            $\mbf{K}^C=\diag(K_{1}^C,\dots,K_{N}^C)$. Also the collective Lyapunov function
            \begin{equation}
            	\label{eq_coll_lyap}
            	\VV^C(\mbf{\hx^C})=\sum_{i=1}^N\VV_{i}^C(\hat{x}_{[i]}^C)=\mbf{[\hx^C]}^T\mbf{P^C}\mbf{\hx^C}
            \end{equation}
            is considered, where $\mbf{P^C}=\diag(P_{1}^C,\dots,P_{N}^C)$. 
            
            One has $\dot \VV^C(\mbf{\hx^C})=\mbf{[\hx^C]}^T\mbf{Q^C}\mbf{\hx^C}$ where
            \begin{equation}
            	\nonumber
            	\mbf{Q}^C = (\mbf{\hat{A}^C}+\mbf{\hat{B}^CK^C})^T
            	\mbf{P^C}+\mbf{P^C}(\mbf{\hat{A}^C}+\mbf{\hat{B}^CK^C}).
            \end{equation}
            A consequence of Proposition \ref{pr:pr_P_Q} is that, the matrix  $\mbf{Q^C}$
            cannot be negative definite. At most, one has
            \begin{equation}
            	\label{eq:Lyapeqnoverall}
            	\mbf{Q^C} \leq 0.
            \end{equation}
            Moreover, even if $Q_{i}^C\leq 0$ holds for all $i\in\DD^C$, the
            inequality \eqref{eq:Lyapeqnoverall} might be violated because of the nonzero
            coupling terms $\hat{A}_{ij}^C$ and load terms $\hat{A}_{load,i}^C$ in matrix $\mbf{\hat A}$.
            The next result shows that this cannot happen if \eqref{eq:equal_ratio} holds.
            \begin{prop}
               	\label{pr:semidefinite_abc}
            	If gains $K_{i}^C$ are chosen according to the \eqref{CDGU_set} in Proposition \ref{pr:pr_P_Q} and \eqref{eq:equal_ratio} holds, then  \eqref{eq:Lyapeqnoverall} holds.
            \end{prop}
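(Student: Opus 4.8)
The plan is to split the closed-loop generator matrix into its block-diagonal, coupling, and load contributions, certify each contribution separately, and then invoke that a sum of negative semidefinite matrices is negative semidefinite. Concretely, I would write $\mbf{\hat{A}^C}+\mbf{\hat{B}^C}\mbf{K}^C = \diag(F_1^C,\dots,F_N^C) + \mbf{\hat{A}}_{\Gamma}^C + \mbf{\hat{A}}_{L}^C$, where $\diag(F_i^C)$ collects the closed-loop local blocks $F_i^C=\hat{A}_{ii}^C+\hat{B}_i^CK_i^C$, the coupling part $\mbf{\hat{A}}_{\Gamma}^C$ has off-diagonal blocks $\hat{A}_{ij}^C$ and diagonal blocks $-\sum_{j\in\NN_i}\hat{A}_{ij}^C$ (arising from the term $\subss{\hat\xi}i^C$), and $\mbf{\hat{A}}_{L}^C=\diag(\hat{A}_{load,1}^C,\dots,\hat{A}_{load,N}^C)$. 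Because $\mbf{P^C}$ is block diagonal, $\mbf{Q^C}$ splits correspondingly into the sum of three symmetric terms.

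The first term is $\diag([F_i^C]^TP_i^C+P_i^CF_i^C)=\diag(Q_1^C,\dots,Q_N^C)$, which is negative semidefinite because each $Q_i^C\le 0$ by Proposition \ref{pr:pr_P_Q}. For the load term, I would use the diagonal structure of $P_i^C$ in \eqref{eq:pi_diagonal} to compute $P_i^C\hat{A}_{load,i}^C$, whose only nonzero entry is $-\eta_i/(R_{Li}C_{ti})$ in the $(1,1)$ position; this block is symmetric, so the load contribution is block diagonal with each block $\le 0$ since $\eta_i,R_{Li},C_{ti}>0$.

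The main obstacle is the coupling term, and this is exactly where \eqref{eq:equal_ratio} enters. The key observation is that each $\hat{A}_{ij}^C$ is symmetric with a single nonzero entry $1/(R_{ij}C_{ti})$ in the voltage/voltage slot, and that $P_i^C$ acts on that slot only through $\eta_i$ (its first column is $[\eta_i,0,0]^T$); hence $P_i^C\hat{A}_{ij}^C$ carries the single entry $\eta_i/(R_{ij}C_{ti})$. Substituting $\eta_i=\bar\sigma C_{ti}$ collapses this to $\bar\sigma/R_{ij}$, which is symmetric in $i,j$ because $R_{ij}=R_{ji}$. I would then show that $\mbf{P^C}\mbf{\hat{A}}_{\Gamma}^C = -\bar\sigma\,(\LL\otimes E_{11})$, where $\LL$ is the weighted graph Laplacian of $\GG_{el}$ with off-diagonal entries $-1/R_{ij}$ and $E_{11}$ is the $3\times 3$ matrix with a single $1$ in position $(1,1)$. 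Since $\LL\otimes E_{11}$ is symmetric, the coupling contribution to $\mbf{Q^C}$ equals $(\mbf{P^C}\mbf{\hat{A}}_{\Gamma}^C)^T+\mbf{P^C}\mbf{\hat{A}}_{\Gamma}^C = -2\bar\sigma\,(\LL\otimes E_{11})$, which is negative semidefinite because $\LL\ge 0$ and $E_{11}\ge 0$.

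Finally I would conclude by adding the three pieces: $\mbf{Q^C}$ is a sum of three negative semidefinite matrices, hence $\mbf{Q^C}\le 0$, which is precisely \eqref{eq:Lyapeqnoverall}. The delicate point worth emphasizing is that without the proportionality $\eta_i\propto C_{ti}$ the paired entries $\eta_i/(R_{ij}C_{ti})$ and $\eta_j/(R_{ij}C_{tj})$ would differ, the coupling block would fail to be symmetric and would generically be indefinite, so the Laplacian argument — and with it the whole conclusion — would break down; condition \eqref{eq:equal_ratio} is exactly what normalizes away the heterogeneous capacitances.
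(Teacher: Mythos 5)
Your proof is correct and follows essentially the same route as the paper: the same decomposition of $\mbf{\hat A^C}$ into local, coupling and load parts, the same use of $Q_i^C\le 0$ for the block-diagonal part, and the same key observation that \eqref{eq:equal_ratio} cancels the heterogeneous $C_{ti}$'s so that the coupling contribution acquires a symmetric graph-Laplacian structure. The only differences are cosmetic: you express the coupling contribution compactly as $-2\bar\sigma\,(\LL\otimes E_{11})$ and invoke positive semidefiniteness of the weighted Laplacian directly, while keeping the load term as a separate negative-semidefinite block-diagonal piece, whereas the paper deletes the identically zero rows and columns to reduce $(b)+(c)$ to an $N\times N$ matrix and folds the load terms into it, obtaining a strictly negative-definite reduced matrix (Laplacian plus positive-definite diagonal) before re-expanding.
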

            \begin{proof}
            	Consider the following decomposition of matrix $\mbf{\hat A^C}$
            	\begin{equation}
            		\label{eq:decomposition}
            		\mbf{\hat A^C} = \mbf{\hat A_D^C}+\mbf{\hat A_{\Xi}^C} + \mbf{\hat{A}_{L}^C} + \mbf{\hat A_{C}^C},
            	\end{equation}
            	where $\mbf{\hat{A}_{D}^C}=\diag(\hat{A}_{ii}^C,\dots,\hat{A}_{NN}^C)$
            	collects the local dynamics only, $\mbf{\hat A_{C}^C}$ collects the coupling dynamic representing the off-diagonal items of matrix $\mbf{\hat A^C}$, while $\mbf{\hat{A}_{\Xi}^C}=\diag(\hat{A}_{\xi 1}^C,\dots,\hat{A}_{\xi
            		N}^C)$ and $\mbf{\hat{A}_{L}^C}=\diag(\hat{A}_{load,1}^C,\dots,\hat{A}_{load,N}^C)$ with 
            	\begin{equation*}
            		\renewcommand\arraystretch{1.5}
            		\hat{A}_{\xi i}^C=\begin{bmatrix}
            			-\sum\limits_{j \in \NN_i}\frac{1}{R_{ij}C_{ti}} &0&0\\
            			0 & 0 &0\\
            			0 & 0 &0
            		\end{bmatrix},
            		\hat{A}_{load,i}^C=\begin{bmatrix}
            		-\frac{1}{R_{Li}C_{ti}} &0&0\\
            		0 & 0 &0\\
            		0 & 0 &0
            		\end{bmatrix},
            	\end{equation*} 
            	takes into account the dependence of each local state on the neighboring CDGUs and the local resistive load.
            	According to the decomposition \eqref{eq:decomposition}, the inequality \eqref{eq:Lyapeqnoverall} is equivalent to
            	\begin{small}
            		\begin{equation}
     		            			\label{eq:Lyap_abc}
            				\underbrace{\mbf{(\hat{A}_{D}^C+\hat{B}^CK^C)^TP^C+
            						P^C(\hat{A}_{D}^C+\hat{B}^CK^C)}}_{({a})}+\underbrace{\mbf{2({\hat{A}_{\Xi}^C}+\mbf{\hat{A}_{L}^C})P^C}}_{(b)}+\underbrace{\mbf{(\hat A_{C}^C)^T
            					P^C+P^C\hat{A}_{C}}}_{(c)}\leq 0.            				
            		\end{equation}
            	\end{small}
            	By means of $Q_{i}^C\leq 0$, matrix $(a) =
            	\mathrm{diag}(Q_{1}^C,\dots,Q_{N}^C)$ is negative semidefinite. Then
            	the contribution of $(b)+(c)$ in \eqref{eq:Lyap_abc} is studied. Matrix $(b)$,
            	by construction, is block
            	diagonal and collects on its diagonal blocks in the form
            	\begin{equation}
            		\label{eq:element_of_b}
            			\begin{aligned}
            				2({\hat{A}_{\xi i}^C}+\hat{A}_{load,i}^C)P_{i}^C &=
            				\renewcommand\arraystretch{2}
            				\begin{bmatrix}
            					-2\frac{1}{R_{Li}C_{ti}}-2\sum\limits_{j\in\NN_i}\frac{1 }{R_{ij}C_{ti}} & 0 & 0& \\
            					0 & 0 & 0 \\
            					0 & 0 & 0\\
            				\end{bmatrix}
            				\left[ \begin{array}{c|c}
            					\eta_i & \mbf{0}_{1\times 2}  \\
            					\hline
            					\mbf{0}_{2\times 1} & \PP_{22,i}^C
            				\end{array}\right]=                   \\
            				&=    \begin{bmatrix}
            					-2\tilde\eta_{i}-2\sum\limits_{j\in\NN_i}\tilde\eta_{ij} & 0 & 0 \\
            					0 & 0 & 0 \\
            					0 & 0 & 0\\
            				\end{bmatrix},
            			\end{aligned}
            	\end{equation}
            	where 
            	\begin{equation}
            		\label{eq:etaij}
            		\tilde\eta_{ij} = \frac{\eta_i }{R_{ij}C_{ti}},\text{   }
            		\tilde\eta_{Li} = \frac{\eta_i }{R_{Li}C_{ti}}
            	\end{equation}
            	Considering matrix $(c)$, each the block in
            	position $(i,j)$ is equal to  
            	\begin{displaymath}
            		\left\{ \begin{array}{ll}
            			P_{i}^C\hat{A}_{ij}^C+(\hat{A}_{ji}^C)^TP_{j}^C & \hspace{7mm}\mbox{if } j\in\mathcal{N}_i \\
            			0 & \hspace{7mm}\mbox{otherwise}
            		\end{array} \right.
            	\end{displaymath}
            	where
            	\begin{equation}
            		\label{eq:element_of_c}
            			\begin{aligned}
            				P_{i}^C\hat{A}_{ij}^C+(\hat{A}_{ji}^C)^TP_{j}^C &=
            				\renewcommand\arraystretch{2}
            				\begin{bmatrix}
            					\tilde\eta_{ij}+\tilde\eta_{ji} & 0 & 0 \\
            					0 & 0 & 0 \\
            					0 & 0 & 0\\
            				\end{bmatrix}.
            			\end{aligned}
            	\end{equation}
            	From \eqref{eq:element_of_b} and \eqref{eq:element_of_c}, except for the elements in position $(1,1)$ of each $3\times 3$
            	block of $(b)+(c)$, others are equals to zero. Thus, to
            	evaluate the positive/negative definiteness of the matrix $(b)+(c)$, the $N\times N$ matrix can be equivalently considered by deleting the second and third rows and columns as
            		\begin{equation} 
            			\label{eq:fake_laplacian}
            			\LL^C = \left[\begin{array}{cccc}
            				(-2\tilde\eta_{1}-2\sum\limits_{j\in\NN_1}\tilde\eta_{1j}) &\bar\eta_{12} & \dots  & \bar\eta_{1N} \\
            				\bar\eta_{21} & \ddots &\ddots & \vdots \\
            				\vdots &\ddots & (-2\tilde\eta_{N-1}-2\sum\limits_{j\in\NN_{N-1}}\tilde\eta_{N-1j}) & \bar\eta_{N-1N}  \\
            				\bar\eta_{N1}  & \dots  & \bar\eta_{NN-1} & (-2\tilde\eta_{N}-2\sum\limits_{j\in\NN_N}\tilde\eta_{Nj}) 
            			\end{array}
            			\right]
            		\end{equation}
            	One has $\LL^C = \MM^C+\UU^C+\GG^C$, where
            		\begin{equation*}
            			\mathcal{M^C}= \begin{bmatrix}
            				-2\sum\limits_{j\in\NN_1}\tilde\eta_{1j} & 0 &\dots &
            				0\\
            				0 & -2\sum\limits_{j\in\NN_2}\tilde\eta_{2j} & \ddots  & \vdots\\
            				\vdots &\ddots& \ddots &0 \\
            				0 & \dots & 0 & -2\sum\limits_{j\in\NN_N}\tilde\eta_{Nj} 
            			\end{bmatrix},
            			\mathcal{U^C}= \begin{bmatrix}
            			-2\tilde\eta_{L1} & 0 &\dots &
            			0\\
            			0 & -2\tilde\eta_{L2} & \ddots  & \vdots\\
            			\vdots &\ddots& \ddots &0 \\
            			0 & \dots & 0 & -2\tilde\eta_{LN} 
            			\end{bmatrix},
            		\end{equation*}
            	and
            		\begin{equation}
            			\label{eq:GG}	
            			\mathcal{G^C}=\left[\begin{array}{cccc}
            				0&\bar\eta_{12} & \dots  & \bar\eta_{1N} \\
            				\bar\eta_{21} & 0 & \ddots  & \vdots \\
            				\vdots &\ddots & \ddots  & \bar\eta_{N-1N}  \\
            				\bar\eta_{N1}  & \dots & \bar\eta_{N N-1}  & 0
            			\end{array}
            			\right].
            		\end{equation}
            	Notice that each off-diagonal element $\bar\eta_{ij}$ in \eqref{eq:GG} is equal to 
            	\begin{equation}
            		\label{eq:bar_eta}
            		\bar{\eta}_{ij} =\left\{ \begin{array}{ll}
            			(\tilde\eta_{ij}+\tilde\eta_{ji})& \hspace{7mm}\mbox{if } j\in\mathcal{N}_i \\
            			0 & \hspace{7mm}\mbox{otherwise}
            		\end{array} \right.
            	\end{equation}
            	
            	At this point, from \eqref{eq:equal_ratio}, one obtains that
            	$\tilde\eta_{ij}=\tilde\eta_{ji}$ (see \eqref{eq:etaij}) and,
            	consequently, $\bar\eta_{ij} = \bar\eta_{ji}=2\tilde\eta_{ij}$ (see \eqref{eq:bar_eta}). Hence, $\LL^C$ is
            	symmetric and has non negative off-diagonal elements. It follows that $-\LL^C$ is equal to a Laplacian matrix
            	\cite{grone1990laplacian,godsil2001algebraic} plus an positive definite diagonal matrix. Thus, it verifies $\LL^C
            	< 0$ by construction. By adding the deleted second and third rows and columns in each block of $(b)+(c)$, then
            	\eqref{eq:Lyap_abc} holds.
            \end{proof}
            
           Our next goal is to show asymptotic stability of the system with multiple CDGUs using the marginal
            stability result in Proposition~\ref{pr:semidefinite_abc} together
            with LaSalle invariance theorem. To this purpose, the main result is then given in Theorem~\ref{thm:overall_stability} which relies on characterizing states $\mathbf{\hat x^C}$
            deriving $\dot{\mathcal{V}}^C(\mathbf{\hat x}^C)=0$.
            \begin{thm}
            	\label{thm:overall_stability}
            	If \eqref{eq:equal_ratio} holds and $Q_{i}^C\neq0$ and the connectivity of the graph $\GG_{el}$ is guaranteed, control coefficients are chosen according to \eqref{CDGU_set}, the origin of \eqref{eq:sysaugoverallclosed} is asymptotically stable.
            \end{thm}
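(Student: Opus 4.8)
The plan is to apply the LaSalle invariance theorem using the collective Lyapunov function $\VV^C$ from \eqref{eq_coll_lyap}. Proposition \ref{pr:semidefinite_abc} already establishes $\mbf{Q^C}\leq 0$, so along the unforced dynamics $\mbf{\dot{\hx}^C}=(\mbf{\hat A^C}+\mbf{\hat B^CK^C})\mbf{\hx^C}$ of \eqref{eq:sysaugoverallclosed} one has $\dot\VV^C=\mbf{[\hx^C]}^T\mbf{Q^C}\mbf{\hx^C}\leq 0$. This immediately yields Lyapunov stability and boundedness of trajectories; the remaining work is to promote this to asymptotic stability by proving that the largest invariant set inside $\{\mbf{\hx^C}:\dot\VV^C=0\}$ is the origin alone.

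The first step is to describe $\{\dot\VV^C=0\}$ explicitly. Exploiting the decomposition in \eqref{eq:Lyap_abc}, the quadratic form separates additively into the block-diagonal contribution $\sum_{i\in\DD^C}[\hat x_i^C]^TQ_i^C\hat x_i^C$ and the voltage-coupling contribution $\mbf{V}^T\LL^C\mbf{V}$, with $\mbf{V}=(V_1,\dots,V_N)^T$. As both pieces are nonpositive, their sum vanishes only if each vanishes. Lemma \ref{le:le_2} shows that the diagonal part is zero exactly when every filter current satisfies $I_{ti}^C=0$, while strict negativity $\LL^C<0$ (obtained in the proof of Proposition \ref{pr:semidefinite_abc} from graph connectivity together with the resistive-load terms) forces $\mbf{V}=0$. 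Hence $\{\dot\VV^C=0\}=\{\mbf{\hx^C}:V_i=0,\ I_{ti}^C=0\ \forall i\}$, with the integrator coordinates $v_i^C$ still free.

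The second step is to intersect this set with the dynamics and extract the largest invariant subset. On $\{V_i=I_{ti}^C=0\}$ the coupling and load terms in $\dot V_i$ cancel, so the voltage stays at zero automatically; the active constraint comes from the filter-current row of $F_i^C$ in \eqref{eq:Fi}. Substituting $V_i=I_{ti}^C=0$ there leaves $\dot I_{ti}^C=\frac{k_{3,i}^C}{L_{ti}^C}v_i^C$, and invariance demands $\dot I_{ti}^C=0$; since $k_{3,i}^C>0$ by \eqref{CDGU_set}, this forces $v_i^C=0$ for every $i$. The largest invariant set therefore collapses to the origin, and LaSalle's theorem delivers asymptotic stability (global, since the closed loop is linear).

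I expect the main difficulty to lie in the clean additive splitting of $\dot\VV^C$ into the local $Q_i^C$ terms and the $\LL^C$ voltage block, which hinges entirely on the diagonal structure of $P_i^C$ and $Q_i^C$ in \eqref{eq:pi_diagonal}. This same structure explains why Lyapunov analysis alone is only marginal: the integrator coordinate is absent from $\mbf{Q^C}$, so it can never be pinned down by the inequality $\dot\VV^C\le 0$. The crucial point is that the gain condition $k_{3,i}^C>0$ enters not through the Lyapunov inequality but through the dynamics in the LaSalle step, which is precisely what eliminates the residual integrator states and upgrades marginal stability to asymptotic stability.
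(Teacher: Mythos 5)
Your proposal is correct and follows essentially the same route as the paper: the same collective Lyapunov function, the same decomposition of $\mbf{Q^C}$ into the block-diagonal part $\diag(Q_1^C,\dots,Q_N^C)$ (handled via Lemma \ref{le:le_2}, forcing $I_{ti}^C=0$) and the coupling/load part reducing to $\LL^C<0$ on the voltages (forcing $V_i=0$), followed by the identical invariance argument in which $\dot I_{ti}^C=\frac{k_{3,i}^C}{L_{ti}^C}v_i^C$ and $k_{3,i}^C>0$ eliminate the residual integrator states. The only negligible discrepancy is attribution: in the paper's proof of Proposition \ref{pr:semidefinite_abc}, strict negativity of $\LL^C$ comes from the resistive-load diagonal terms added to the (negative of a) Laplacian, not from connectivity per se.
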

            \begin{proof}
            	From Proposition \ref{pr:semidefinite_abc}, $\dot\VV^C(\mbf{\hat x}^C)$ is negative semidefinite
            	meaning that \eqref{eq:Lyapeqnoverall} holds. 
            	We aim at showing that the origin of the system with multiple CDGUs is also attractive using the LaSalle
            	invariance Theorem \cite{khalil2001nonlinear}. For this purpose, the set is computed 
            	$R^C = \{\mbf{x}^C\in\Rset^{3N} : (\mbf{x^C})^T \mbf{Q^C}\mbf{x^C}= 0 \}$ by means of the decomposition in \eqref{eq:Lyap_abc}, which
            	coincides with
            	\begin{equation}
            		\label{eq:R}
            		\begin{aligned}
            			R^C &= \{\mbf{x^C} : \mbf{(x^C)}^T \left( (a)+(b)+(c)\right)\mbf{x^C}
            			= 0 \}\\
            			&=\{\mbf{x^C} : \mbf{(x^C)}^T (a) \mbf{x^C} +\mbf{(x^C)}^T(b)\mbf{x^C}+\mbf{(x^C)}^T(c) \mbf{x^C}
            			= 0 \}\\
            			&=\underbrace{\{\mbf{x^C} : \mbf{(x^C)}^T (a) \mbf{x^C} =0\}}_{X_{1}^C}\cap \underbrace{\{\mbf{x^C}:\mbf{(x^C)}^T\left[(b)+(c)\right] \mbf{x^C} =0\}}_{X_{2}^C} .
            		\end{aligned}
            	\end{equation}
            	In particular, the last equality follows from the fact that $(a)$ and $(b)+(c)$ are negative semidefinite matrices (see the proof of Proposition \ref{pr:semidefinite_abc}).
                
                First, based on Lemma \ref{le:le_2}, the set $X_{1}^C$ is characterized as 
            	\begin{small}
            		\begin{equation}
            		\label{eq:X1_c}
            		X_{1}^C = \{\mbf{x^C}:\mbf{x^C} =\left[ \text{ }\alpha_1\text{ } 0
            		\text{ } \beta_1 \text{ }|\text{ }\cdots \text{ }| \text{ }\alpha_N\text{ }0
            		\text{ } \beta_N \text{ }\right]^T, \alpha_i,\beta_i\in\Rset\},
            		\end{equation}
            	\end{small}
            	Then, we focus on the elements of set $X_{2}^C$ based on Proposition \ref{pr:semidefinite_abc}. Since matrix $(b)+(c)$ can be seen as an "expansion" of a matrix which is negative definite matrix with zero entries on the second and third rows and columns of $3\times 3$ block, by construction, vectors in the form 
                \begin{small}
            	\begin{equation}
            	\label{eq:X2_c}
            	X_{2}^C = \{\mbf{x^C}:\mbf{x^C} =\left[ \text{ }0\text{ } \tilde x_{12}
            	\text{ } \tilde x_{13} \text{ }|\text{ }\cdots \text{ }| \text{ }0 \text{ }\tilde x_{N2}
            	\text{ } \tilde x_{N3} \text{ }\right]^T, \tilde x_{i2},\tilde x_{i3}\in\Rset\},
            	\end{equation}
            \end{small}
            \normalsize
             Hence, by merging \eqref{eq:X1_c} and \eqref{eq:X2_c}, and from \eqref{eq:R}, it derives that
            	\begin{small}
            		\begin{equation}
            		\label{eq:R_new}
            		R^C = \{\mbf{x}:\mbf{x} =\left[  \text{ }0\text{ }0
            		\text{ } \beta_1 \text{ }|\text{ }\cdots \text{ }| \text{ }0\text{ }0
            		\text{ } \beta_N \text{ }\right]^T, \beta_i\in\Rset\}.
            		\end{equation}
            	\end{small}
            	Finally, in order to conclude the proof, it should be shown that the largest
            	invariant set $M^C\subseteq R$ is the origin. To this purpose, \eqref{eq:modelDGUgen-aug-closed} is considered, by adding the coupling terms $\subss{\hat\xi}i$ and the resistance load term $\hat{A}_{load,i}^C\hat{x}_{i}^C(0)$, setting load disturbance $\hat d_{[i]}^C= 0$, choosing the initial
            	state $\mbf{\hat x}^C(0) = \left[ \hat x_{1}^C(0)|\dots|\hat
            	x_{N}^C(0)\right]^T\in R^C$. In order to find conditions on the
            	elements of $\mbf{\hat x}^C(0)$ that must hold for having
            	$\mbf{\dot{\hat{x}}^C}\in R^C$, one has
            	\begin{equation*}
            		\begin{aligned}
            			\dot{\hat x}_{i}^C(0) &={F_{i}^C}\hat
            			x_{i}^C(0)+\hat{A}_{load,i}^C\hat{x}_{i}^C(0)+\sum\limits_{j\in\NN_i}\underbrace{\hat A_{ij}^C\left(\hat
            				x_{j}^C(0)-\hat x_{i}^C(0)\right)}_{=0}\\
            			&=\left[\begin{array}{ccc}
            				-\frac{1}{R_{Li}C_{ti}} & \frac{1}{C_{ti}} & 0 \vspace{2mm}\\ 
            				\frac{k_{1,i}^C-1}{L_{ti}^C} & \frac{k_{2,i}^C-R_{ti}^C}{L_{ti}^C} & \frac{k_{3,i}^C}{L_{ti}^C} \vspace{2mm}\\
            				0 & -1 & 0\\
            			\end{array}\right]\left[\begin{array}{c}
            				0 \vspace{2mm}\\
            				0 \vspace{2mm}\\
            				\beta_i
            			\end{array}\right]
            			\\
            			&=\left[\begin{array}{c}
            				0  \vspace{2mm}\\
            				\frac{k_{3,i}^C}{L_{ti}^C}\beta_i \vspace{2mm}\\
            				0
            			\end{array}\right]
            			\normalsize
            		\end{aligned}
            	\end{equation*}
            	for all $i\in\DD^C$. It follows that $\mbf{\dot{\hat{x}}^C}(0)\in R$ only if
            	$\beta_i = 0$. Since $M^C\subseteq R$, from \eqref{eq:R_new} one has $M^C = \{0\}$.
            \end{proof}          
            \begin{rmk}
            		\label{rmk:_C}
            		The design of stabilizing controller for each CDGU can be conducted according to Proposition \ref{pr:pr_P_Q}. In particular, differently from the approach in \cite{7934339}, no optimization problem has to be solved for computing a local controller. Indeed, it is enough to choose control coefficient $k_{1,i}^C$, $k_{2,i}^C$ and $k_{3,i}^C$ from inequality set \eqref{CDGU_set}. Note that these inequalities are always feasible, implying that a stabilizing controller always exists. Moreover, the inequalities depend only on the parameter $R_{ti}^C$ of the CDGU $i$. Therefore, the control synthesis is independent of parameters of CDGUs and power lines which means that controller design can be executed only once for each CDGU in a plug-and play fashion. From Theorem \ref{thm:overall_stability}, local controllers also guarantee stability of the whole MG. When new CDGUs are plugged in the MG, their controller are designed as described above, the connectivity of the electrical graph $\GG_{el}$ is preserved and have Theorem \ref{thm:overall_stability} applied to the whole MG. Instead, when a CDGU is plugged out, the electrical graph $\GG_{el}$ might be disconnected and split into two connected graphs. Theorem \ref{thm:overall_stability} can still be applied to show the stability of each sub-MG.
            \end{rmk}
\section{DC MG with Grid-Forming/Feeding Converters and Its Clusters}
\label{PV/Battery_Model}
\subsection{Electrical model of one MG}

As mentioned before, the CDGU should be cooperative operated with voltage support in the MGs. The ESS is interfaced with the MG by means of the grid-forming converter of VDGU to provide necessary voltage support for the PCC bus based on which, the RES is interfaced with the MG through the grid-feeding converters of CDGU to provide current for the loads. Thus, in this section, the combination of oen VDGU and one CDGU is considered as one MG through connecting to the same common bus achieving both voltage support and current feeding simultaneously. And the MG clusters are formed by interconnecting several MGs through line impedances.  

Here, a MG cluster system composed of $N$ MGs is considered belonging to set $\DD=\{1,\dots,N\}$. Two MGs are neighbors if there is a
power line connecting them.
$\NN_{i}\subset\DD$ denotes the subset of neighbors of MG
$i$. The neighboring relation is symmetric which means
$j\in\NN_{i}$ implies
$i\in\NN_{j}$.

The electrical scheme of the $i$-th MG is represented with Fig. \ref{fig:ctrl_part_Module} 
\begin{equation}
\label{eq:module}
\text{Module}~i:\hspace{-4mm}\quad\left\lbrace
\begin{aligned}
\frac{dV_{i}}{dt} &= \frac{1}{C_{ti}}I_{ti}^C+\frac{1}{C_{ti}}I_{ti}^V+\sum\limits_{j\in\NN_i}\left(\frac{V_j}{C_{ti} R_{ij}}-\frac{V_i}{C_{ti}R_{ij}}\right)-\frac{1}{C_{ti}}(I_{Li}+\frac{V_i}{R_{Li}})\\
\frac{dI_{ti}^C}{dt} &= -\frac{1}{L_{ti}^C}V_{i}-\frac{R_{ti}^C}{L_{ti}^C}I_{ti}^C+\frac{1}{L_{ti}^C}V_{ti}^C\\
\frac{dI_{ti}^V}{dt} &= -\frac{1}{L_{ti}^V}V_{i}-\frac{R_{ti}^V}{L_{ti}^V}I_{ti}^V+\frac{1}{L_{ti}^V}V_{ti}^V\\
\end{aligned}
\right.
\end{equation}
where variables $V_i$, $I_{ti}^C$, $I_{ti}^V$ are the $i$-th PCC voltage and
filter current from RES and filter current from ESS, respectively, $V_{ti}^C$ represents the command to the
grid-feeding converter, $V_{ti}^V$ represents the command to the
grid-forming converter, and $R_{ti}^C$, $L_{ti}^C$ the electrical
parameters for grid-feeding converter, $R_{ti}^V$, $L_{ti}^V$ the electrical parameters for grid-forming converter, $C_{ti}$ is the capacitor at the common PCC bus. Moreover, $V_{j}$ is the voltage at the PCC of each neighboring MG $j\in\NN_i$ and $R_{ij}$ and $L_{ij}$ is the resistance and inductance of the power DC line connecting MGs $i$ and $j$.
\begin{figure}
	\centering
	\hspace{-10mm}
	\includegraphics[scale=0.75]{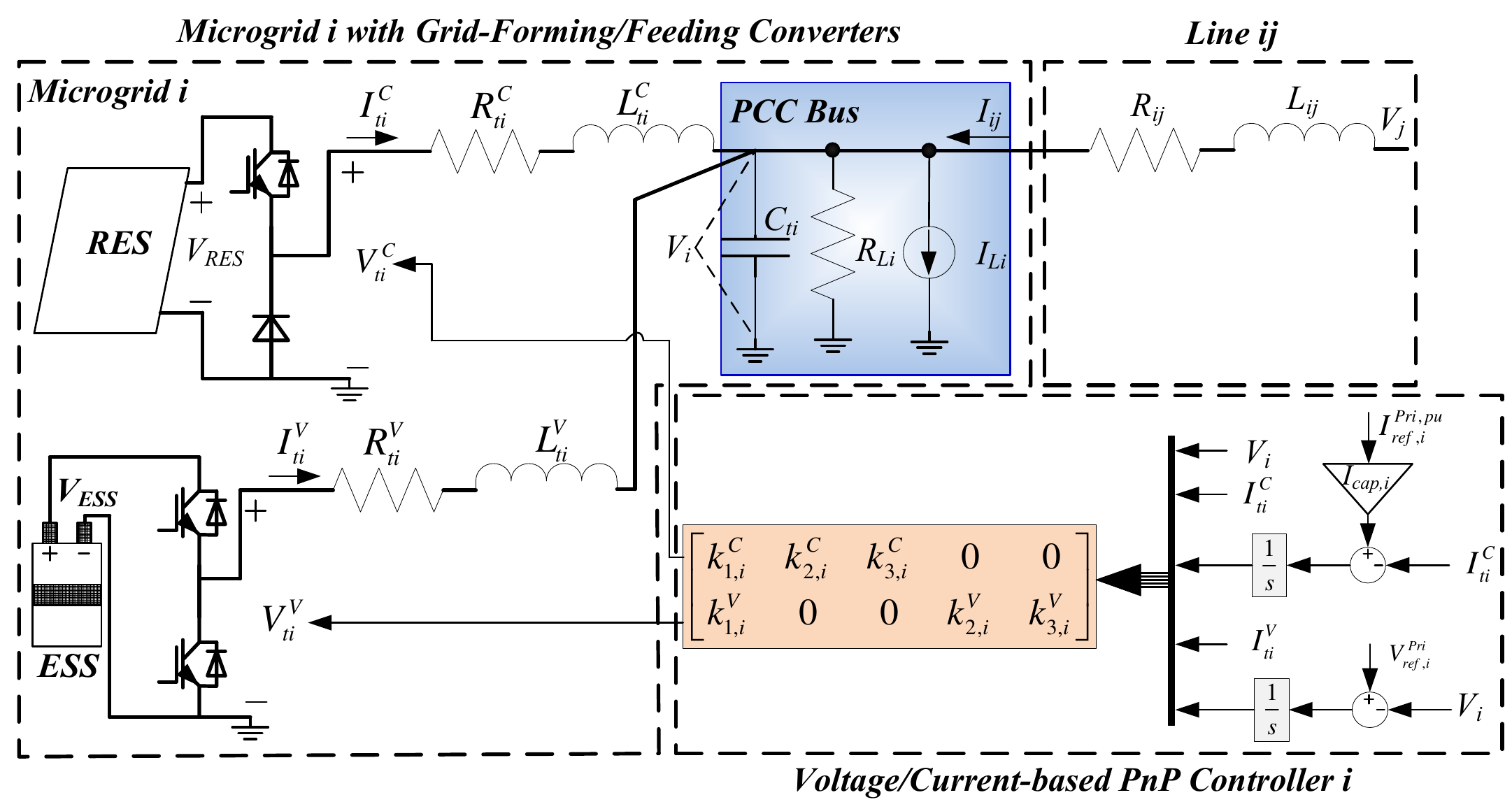}%
	\caption{Electrical Scheme of MG $i$ and voltage/current-based PnP controller.}
	\label{fig:ctrl_part_Module}
\end{figure}
\subsection{State-space model of MG clusters}
 \label{sec:State-space model of the module}
 Dynamics \eqref{eq:module} provides the state-space variables equations: 
 \begin{equation*}
 \text{ $\subss{\Sigma}{i}^{MG}:$}\left\lbrace
 \begin{aligned}
 \subss{\dot{x}}{i}(t) &= A_{ii}\subss{x}{i}(t) +
 B_{i}\subss{u}{i}(t)+M_{i}\subss{d}{i}(t)+\subss\xi{i}(t)+A_{load,i}\subss{x}{i}(t)\\
 \subss{z}{i}(t)       &= H_{i}\subss{x}{i}(t)\\
 \end{aligned}
 \right.
 \end{equation*}
 where $\subss{x}{i}=[V_{i},I_{ti}^C,I_{ti}^V]^T$ is the state of the system,
 $\subss{u}{i} = [V_{ti}^C,V_{ti}^V]$ is the control input,
 $\subss{d}{i} = I_{Li}$ is the exogenous input and
 $\subss{z}{i} = [I_{i}^C,V_{i}]$ is the controlled variable of the
 system. The term $\subss\xi i=\sum_{j\in\NN_i}A_{ij}(\subss x j-\subss x i)$ accounts
 for the coupling with each MG $j\in\NN_i$. The matrices of
 $\subss{\Sigma}{i}^{MG}$ are obtained from
 \eqref{eq:module} as:
 \begin{equation*}
 \renewcommand\arraystretch{1.8}
 A_{ii}=\begin{bmatrix}
 0 & \frac{1}{C_{ti}} & \frac{1}{C_{ti}}\\
 -\frac{1}{L_{ti}^C} & -\frac{R_{ti}^C}{L_{ti}^C} &0 \\
 -\frac{1}{L_{ti}^V} & 0 &-\frac{R_{ti}^V}{L_{ti}^V} \\
 \end{bmatrix},  \hspace{3mm} A_{load,i}=
 \begin{bmatrix}
 -\frac{1}{R_{Li}C_{ti}}  & 0 & 0 \\
 0 & 0 & 0\\
 0 & 0 & 0\\
 \end{bmatrix}, \hspace{3mm} A_{ij}=
 \begin{bmatrix}
 \frac{1}{R_{ij}C_{ti}}  & 0 & 0 \\
 0 & 0 & 0\\
 0 & 0 & 0\\
 \end{bmatrix},
 \end{equation*}
 \begin{equation*}
 B_{i}=\begin{bmatrix}
 0 & 0\\
 \frac{1}{L_{ti}^C} & 0\\
 0 & \frac{1}{L_{ti}^V}
 \end{bmatrix},
 \qquad
 M_{i}=\begin{bmatrix}
 -\frac{1}{C_{ti}} \\
 0 \\
 0 \\
 \end{bmatrix},
 \qquad
 H_{i}=\begin{bmatrix}
 0 & 1 & 0\\
 1 & 0 & 0\\ 
 \end{bmatrix}.
 \end{equation*}    
 The overall model of MG clusters is given by
 \begin{equation}
 \begin{aligned}
 \label{eq:stdmoduleA}
 \mbf{\dot{x}}(t) &= \mbf{Ax}(t) + \mbf{Bu}(t)+ \mbf{Md}(t)\\
 \mbf{z}(t)       &= \mbf{Hx}(t)
 \end{aligned}
 \end{equation}
 where $\mbf {x} = (\subss x 1,\ldots,\subss x
 N)\in\Rset^{3N}$, $\mbf {u} = (\subss u 1,\ldots,\subss u
 N)\in\Rset^{2N}$, $\mbf {d^C} = (\subss d 1,\ldots,\subss d
 N)\in\Rset^{N}$, 
 $\mbf {z} = (\subss z 1,\ldots,\subss z
 N)\in\Rset^{2N}$. Matrices $\mbf{A}$, $\mbf{B}$, $\mbf
 {M}$ and $\mbf {H}$ are reported in Appendix \ref{sec:AppMatrices_DGM}.
   \section{Design of stabilizing voltage/current controllers}
  \label{sec:PnPctr_for_V_Cl}
  \subsection{Structure of PnP Voltage/Current controllers}
  \label{sec:aug_sys_for_V_C}
  
In order to track constant references $\mbf{z_{ref}}(t)$, when
  $\mbf{d}(t)$ is constant as well, the MG model is augmented with integrators \cite{Skogestad1996}. A
  necessary condition for making
  error $\mbf{e}(t)=\mbf{z_{ref}}(t)-\mbf{z}(t)$ equal to zero as $t\rightarrow\infty$, is that, for arbitrary
  $\mbf{\bar d}$ and $\mbf{\bar
  	z_{ref}}$, there are equilibrium states and inputs $\mbf{\bar
  	x}$ and $\mbf{\bar u}$ verifying \eqref{eq:stdmoduleA}. The existence of these equilibrium points can be proven by following the proof of
  Proposition 1 in \cite{7419890}.
  
  The dynamics of integrators are (as shown in Fig. \ref{fig:ctrl_part_Module}, where $z_{ref_{[i]}}^{Pri,C}=I_{ref,i}^{Pri,pu}*I_{cap,i}$ , $z_{ref_{[i]}}^{Pri,V}={V_{ref,i}^{Pri}}$)
   \begin{subequations}
  	\label{eq:intdynamics}
  	\begin{empheq}[left=\empheqlbrace]{align}
  	\label{eq:inte_V}
  	\subss{\dot{v}}{i}^C(t) &= \subss{e}{i}^C(t) = z_{ref_{[i]}}^{Pri,C}(t)-I_{ti}^C(t)\\
  	\label{eq:inte_C}
  	\subss{\dot{v}}{i}^V(t) &= \subss{e}{i}^V(t) = z_{ref_{[i]}}^{Pri,V}(t)-V_{i}(t)
  	\end{empheq}	
  \end{subequations}
  and hence, the augmented model is
  \begin{equation}
  \label{eq:modelmodule-aug}
  \subss{\hat{\Sigma}}{i}^{MG} :
  \left\lbrace
  \begin{aligned}
  \subss{\dot{\hat{x}}}{i}(t) &= \hat{A}_{ii}\subss{\hat{x}}{i}(t) + \hat{B}_{i}\subss{u}{i}(t)+\hat{M}_{i}\subss{\hat{d}}{i}(t)+\subss{\hat\xi}i(t)+\hat{A}_{load,i}\subss{\hat{x}}{i}(t)\\
  \subss{z}{i}(t)       &= \hat{H}_{i}\subss{\hat{x}}{i}(t)
  \end{aligned}
  \right.
  \end{equation}
  where $\subss{\hat{x}}{i}=[V_{i},I_{ti}^C,v_{i}^C,I_{ti}^V
  ,v_{i}^V]^T\in\Rset^5$ is the state,
  $\subss{\hat{d}}{i}=[\subss{d}{i},\subss{z_{ref}}{i}^{Pri,C},\subss{z_{ref}}{i}^{Pri,V}]^T\in\Rset^3$
  collects the exogenous signals and
  $\subss{\hat\xi}{i}=\sum_{j\in\NN_i}\hat{A}_{ij}(\subss{\hat{x}}{j}-\subss{\hat{x}}{i})$. Matrices
  in \eqref{eq:modelmodule-aug} are defined as follows
  \begin{equation*}
  \begin{aligned}
  \hat{A}_{ii} &=\begin{bmatrix} 
  \renewcommand\arraystretch{1.5}
   0 & \frac{1}{C_{ti}} &0 &\frac{1}{C_{ti}} & 0\\
  -\frac{1}{L_{ti}^C} & -\frac{R_{ti}^C}{L_{ti}^C} &0 &0 &0 \\
  0 &-1 &0 &0 &0\\
  -\frac{1}{L_{ti}^V} & 0 & 0 &-\frac{R_{ti}^V}{L_{ti}^V}&0 \\
  -1 &0 &0 &0 &0\\
  \end{bmatrix},
  \hat{B}_{i}=\begin{bmatrix}
  0 & 0\\
  -\frac{1}{L_{ti}^C} & 0\\
  0 & 0\\
  0 & -\frac{1}{L_{ti}^V}\\
  0 & 0\\
  \end{bmatrix},
  \hat{M}_{i}=\begin{bmatrix}
  -\frac{1}{C_{ti}} & 0 & 0\\
  0 & 0 & 0 \\
  0 & 1 & 0 \\
  0 & 0 & 0 \\
  0 & 0 & 1 \\
  \end{bmatrix},\hspace{5mm}\\
  \hat{A}_{ij}&=\begin{bmatrix}
  A_{ij} & \mbf{0}_{1\times 2}\\
  \mbf{0}_{2\times 1}&\mbf{0}_{2\times 2}\\
  \end{bmatrix},
  \hat{A}_{load,i}=\begin{bmatrix}
  A_{load,i} &\mbf{0}_{1\times 2}\\
  \mbf{0}_{2\times 1}&\mbf{0}_{2\times 2}\\
  \end{bmatrix}, 
  \hat{H}_{i}=\begin{bmatrix}
  H_{i} & \mbf{0}_{2\times 2}
  \end{bmatrix}.
  \end{aligned}
  \end{equation*}   
  Based on Proposition 2 in \cite{7419890}, it can be proven that the
  pair $(\hat{A}_{ii},\hat{B}_{i})$ is controllable. Hence,
  system \eqref{eq:modelmodule-aug} can be stabilized.
   
  The overall augmented system is obtained from \eqref{eq:modelmodule-aug} as
  \begin{equation}
  \label{eq:sysaugmoduleoverall}
  \left\lbrace
  \begin{aligned}
  \mbf{\dot{\hat{x}}}(t) &= \mbf{\hat{A}\hat{x}}(t) + \mbf{\hat{B}u}(t)+ \mbf{\hat{M}\hat{d}}(t)\\
  \mbf{z}(t)       &= \mbf{\hat{H}\hat{x}}(t)
  \end{aligned}
  \right.
  \end{equation}
  where $\mbf{\hat{x}}$ and $\mbf{\hat{d}}$ collect variables $\subss{\hat{x}}{i}$ and $\subss{\hat{d}}{i}$ respectively, and matrices $\mbf{\hat{A}}, \mbf{\hat{B}}, \mbf{\hat{M}}$ and $\mbf{\hat{H}}$ are obtained from systems \eqref{eq:modelmodule-aug}. 
  
  Each MG $\subss{\hat{\Sigma}}{i}^{MG}$ is with the following state-feedback controller
  \begin{equation}
  \label{eq:ctrldec_module}
  \subss{\CC}{i}^{MG}:\qquad \subss{u}{i}(t)=K_{i}\subss{\hat{x}}{i}(t)
  \end{equation}
  where 
  \begin{equation*}
  	 K_{i}=\begin{bmatrix} 
  	 k_{1,i}^C & k_{2,i}^C & k_{3,i}^C & 0 &0\\
  	 k_{1,i}^V & 0 & 0 & k_{2,i}^V & k_{3,i}^V
  	 \end{bmatrix}\in\Rset^{2\times5}.
  \end{equation*}
  Noting that the control variables $V_{ti}^C$ and $V_{ti}^V$ are coupled through the coefficients $k_{1,i}^C$ and $k_{1,i}^V$ appearing in the first column of $K_{i}$. In other words, measurement of $V_{i}$ are used for generating both $V_{ti}^C$ and $V_{ti}^V$. It turns out that, together with the integral actions \eqref{eq:intdynamics}, controllers $\subss{\CC}{i}^{MG}$, define a multivariable PI regulator, see Fig. \ref{fig:ctrl_part_Module}. In particular, the overall control architecture is
  decentralized since the computation of
  $\subss{u}{i}$ requires the state of
  $\subss{\hat{\Sigma}}{i}^{MG}$ only. In the sequel, we show how structured Lyapunov functions can be used to ensure asymptotic stability of the MG clusters, when MGs are equipped with controllers \eqref{eq:ctrldec_module}.
  	  \subsection{Conditions for stability of the closed-loop with MG Clusters}
  \label{sec:pnp_design_modules}
    \begin{assum} 
  	\label{ass:ctrl_module}
  	As same in Section \ref{sec:pnp_design}, we will use local structured Lyapunov function 
  	\begin{equation}
  	\label{eq_sep_lyap_m}
  	V_{i}(\subss{\hat{x}}{i})=[\subss{\hat{x}}{i}]^TP_{i}\subss{\hat{x}}{i}
  	\end{equation} 
  	where the positive definite matrix $P_{i}\in\Rset^{5\times5}$ has the structure
  	\begin{equation}
  	\label{eq:pstruct_module}
  	P_{i}=\left[
  	\renewcommand\arraystretch{1.5}
  	\begin{array}{c|c|c}
  	\eta_{i} & \mbf{0}_{1\times 2} & \mbf{0}_{1\times 2}\\
  	\hline
  	\mbf{0}_{2\times 1}  & \PP_{22,i}^C & \mbf{0}_{2\times 2}\\
  	\hline
  	\mbf{0}_{2\times 1}  & \mbf{0}_{2\times 2} & \PP_{44,i}^V 
  	\end{array}\right],
  	\end{equation}
  	where 
  	\begin{equation}
  	\label{eqn:P22_P44}
  	\PP_{22,i}^C=
  	\left[ \begin{array}{cc}
  	p_{22,i}^C & p_{23,i}^C  \\
  	p_{23,i}^C & p_{33,i}^C
  	\end{array}\right],
  	\PP_{44,i}^V=
  	\left[ \begin{array}{cc}
  	p_{44,i}^V & p_{45,i}^V  \\
  	p_{54,i}^V & p_{55,i}^V
  	\end{array}\right].
  	\end{equation}
  	And $\eta_{i}>0$ is a local parameter and satisfy the eq. \eqref{eq:equal_ratio}.
  \end{assum}
  In absence of coupling terms $\subss{\hat\xi}i(t)$,and load terms $\hat{A}_{load,i}\subss{\hat{x}}{i}(t)$, we
  would like to guarantee asymptotic stability of the nominal closed-loop model
  \begin{equation}
  \label{eq:modelmodule-aug-closed}
  \subss{\dot{\hat{x}}}{i}(t) = \underbrace{(\hat{A}_{ii}+\hat{B}_{i}K_i)}_{F_{i}}\subss{\hat{x}}{i}(t)+\hat{M}_{i}\subss{\hd}{i}(t).\\
  \end{equation}
  By direct calculation, one can show that $F_{i}$ has the following structure
  \begin{equation}
  \begin{aligned}
  \label{eq:Fi_module}
  \renewcommand\arraystretch{3}
  F_{i}=&\left[ \begin{array}{c|cc|cc}
  0 & f_{12,i} & 0 & f_{14,i} & 0\\
  \hline
  f_{21,i} & f_{22,i} & f_{23,i} & 0 & 0\\
  0 & f_{32,i} & 0 & 0 & 0\\
  \hline
  f_{41,i}  & 0 & 0& f_{44,i} & f_{45,i}\\
  f_{51,i}  & 0 & 0& 0 & 0
  \end{array}\right]\\=&
  \left[
  \renewcommand\arraystretch{1.8}
  \begin{array}{c|cc|cc}
  0 & \frac{1}{C_{ti}} & 0 & \frac{1}{C_{ti}} & 0\\
  \hline
  \frac{(k_{1,i}^C-1)}{L_{ti}^C} & \frac{(k_{2,i}^C-R_{ti}^C)}{L_{ti}^C}& \frac{k_{3,i}^C}{L_{ti}^C} & 0 & 0\\
  0 & -1 & 0 & 0 & 0\\
  \hline
  \frac{(k_{1,i}^V-1)}{L_{ti}^V} & 0 & 0 &\frac{(k_{2,i}^V-R_{ti}^V)}{L_{ti}^V}& \frac{k_{3,i}^V}{L_{ti}^V}  \\
  -1 & 0 & 0 & 0 & 0
  \end{array}\right]=\left[
  \renewcommand\arraystretch{1.5} \begin{array}{c|c|c}
  0 & \FF_{12,i}^C & \FF_{14,i}^V \\
  \hline
  \FF_{21,i}^C & \FF_{22,i}^C &0\\
  \hline
  \FF_{41,i}^V & 0 & \FF_{44,i}^V
  \end{array}\right].
  \end{aligned}
  \end{equation}
  From Lyapunov theory, asymptotic stability of \eqref{eq:modelmodule-aug-closed} can be certified by the existence of a Lyapunov function $\VV_i(\subss \hx i)=[\subss \hx i] ^T P_i \subss \hx i$ where  $P_{i}\in\Rset^{5\times5}$, $P_{i} = P_{i}^T>0$ and 
  \begin{equation}
  \label{eq:Lyapeqnith_module}
  Q_{i} = F_{i}^T
  P_{i}+P_{i}F_{i} 
  \end{equation}
  is negative definite. 
  In presence of nonzero coupling terms, we will show that asymptotic stability can be achieved under Assumption \ref{ass:ctrl_module}.
%
  
  Based on  \eqref{eq:pstruct_module} and \eqref{eq:Fi_module}, the \eqref{eq:Lyapeqnith_module} can be rewritten as
        \begin{equation}
\label{fullQ_module}
\begin{aligned}
Q_{i}=&\left[
\renewcommand\arraystretch{1.5} \begin{array}{c|c|c}
0 & [\FF_{21,i}^C]^T\PP_{22,i}^C+\eta_{i}\FF_{12,i}^C & [\FF_{41,i}^V]^T\PP_{44,i}^V+\eta_{i}\FF_{14,i}^V\\
\hline
[\FF_{12,i}^C]^T\eta_{i}+\PP_{22,i}^C\FF_{21,i}^C & [\FF_{22,i}^C]^T\PP_{22,i}^C+\PP_{22,i}^C\FF_{22,i}^C & \mbf{0}_{2\times 2}\\
\hline
[\FF_{14,i}^V]^T\eta_{i}+\PP_{44,i}^V\FF_{41,i}^V & \mbf{0}_{2\times 2} & [\FF_{44,i}^V]^T\PP_{44,i}^V+\PP_{44,i}^V\FF_{44,i}^V
\end{array}\right]\\
=&\left[
\renewcommand\arraystretch{1.5} \begin{array}{c|c|c}
0 & \QQ_{12,i}^C & \QQ_{14,i}^V\\
\hline
[\QQ_{12,i}^C]^T & \QQ_{22,i}^C & \mbf{0}_{2\times 2}\\
\hline
[\QQ_{14,i}^V]^T & \mbf{0}_{2\times 2} & \QQ_{44,i}^V
\end{array}\right]\\
\end{aligned}
\end{equation} 
\begin{lem}
	\label{le:negative_semidefinite_matrix}
	Under Assumption \ref{ass:ctrl_module}, if $Q_{i}\leq 0$, $Q_{i}$ has the following structure
	\begin{equation}
	\label{eq:negative_semidefinite_matrix}
	Q_{i}=\left[
	\renewcommand\arraystretch{1.5} \begin{array}{c|c|c}
	0 & \mbf{0}_{1\times 2} & \mbf{0}_{1\times 2}\\
	\hline
	\mbf{0}_{2\times 1} & \QQ_{22,i}^C & \mbf{0}_{2\times 2}\\
	\hline
	\mbf{0}_{2\times 1} & \mbf{0}_{2\times 2} & \QQ_{44,i}^V
	\end{array}\right]\\	
	\end{equation}
	Furthermore, the diagonal block matrix must verify
	\begin{subequations}
		\label{eq:Q_features}
		\begin{empheq}[left=\empheqlbrace]{align}
		\label{eq:Q_22_C_module}
		\QQ_{22,i}^C &\leq 0\\
		\label{eq:Q_44_V_module}
		\QQ_{44,i}^V &\leq 0
		\end{empheq}	
	\end{subequations}
\end{lem}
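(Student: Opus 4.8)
Lemma~\ref{le:negative_semidefinite_matrix} asserts that when $Q_i \leq 0$, the block structure of $Q_i$ in \eqref{fullQ_module} must collapse so that all off-diagonal blocks ($\QQ_{12,i}^C$ and $\QQ_{14,i}^V$) vanish and only the two diagonal blocks $\QQ_{22,i}^C$ and $\QQ_{44,i}^V$ survive, each itself negative semidefinite.

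**Plan of proof.** The key observation is that the $(1,1)$ entry of $Q_i$ in \eqref{fullQ_module} equals zero. This is exactly the situation of Proposition~\ref{pr:prop_Q_semi}: a negative semidefinite symmetric matrix with a zero on the diagonal must have the entire corresponding row and column equal to zero. First I would invoke part (ii) of Proposition~\ref{pr:prop_Q_semi} with the diagonal index corresponding to the first scalar entry. Since $q_{11}=0$ and $Q_i\leq 0$, the whole first row and first column of $Q_i$ must vanish. In block terms this forces $\QQ_{12,i}^C = \mbf{0}_{1\times 2}$ and $\QQ_{14,i}^V = \mbf{0}_{1\times 2}$, which immediately gives the claimed structure \eqref{eq:negative_semidefinite_matrix}, because the two off-diagonal blocks $\mbf{0}_{2\times2}$ between the $C$- and $V$-subsystems are already zero by construction in \eqref{fullQ_module}.

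**Deriving the block inequalities.** Once \eqref{eq:negative_semidefinite_matrix} is established, $Q_i$ is block diagonal: up to the harmless zero scalar in position $(1,1)$, it is $\mathrm{diag}(0,\QQ_{22,i}^C,\QQ_{44,i}^V)$. For any vector $w$ supported only on the coordinates of the $C$-block, the quadratic form $w^T Q_i w = w^T \QQ_{22,i}^C w$, and since $Q_i\leq 0$ this is nonpositive; hence $\QQ_{22,i}^C \leq 0$, giving \eqref{eq:Q_22_C_module}. The identical argument restricted to the $V$-block coordinates yields $\QQ_{44,i}^V \leq 0$, which is \eqref{eq:Q_44_V_module}. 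Formally this is the standard fact that a principal submatrix of a negative semidefinite matrix is itself negative semidefinite, applied to the two diagonal blocks.

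**Main obstacle.** The argument is essentially a structural one and the only subtlety is verifying that the off-diagonal block between the $C$- and $V$-subsystems is genuinely zero \emph{before} appealing to negative semidefiniteness, so that killing the first row and column via Proposition~\ref{pr:prop_Q_semi} really does produce a fully block-diagonal matrix. This is already visible in \eqref{fullQ_module}, where that block is $\mbf{0}_{2\times2}$ by direct calculation (the $C$-states and $V$-states do not couple through $F_i$ in \eqref{eq:Fi_module} except via the shared voltage coordinate, which is precisely the first row/column now being zeroed out). So the real work was done upstream in computing \eqref{fullQ_module}; the lemma itself is then a short two-step deduction: apply Proposition~\ref{pr:prop_Q_semi} to eliminate the first row and column, then read off the two diagonal-block inequalities from the resulting block-diagonal form.
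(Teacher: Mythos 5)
Your proposal is correct and follows essentially the same route as the paper's proof: both invoke Proposition~\ref{pr:prop_Q_semi} on the zero $(1,1)$ entry of $Q_i$ in \eqref{fullQ_module} to annihilate the first row and column, and both then obtain $\QQ_{22,i}^C \leq 0$ and $\QQ_{44,i}^V \leq 0$ by restricting the quadratic form $x^T Q_i x \leq 0$ to vectors supported on the $C$-block and $V$-block coordinates respectively (the paper phrases this as partitioning $x$ and setting $\tilde{x}_2$ or $\tilde{x}_4$ to zero, which is the same principal-submatrix argument you use).
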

\begin{proof}
	If $Q_{i}\leq 0$ is satisfied, from Proposition \ref{pr:prop_Q_semi}, the first block-row and block-column in \eqref{eq:negative_semidefinite_matrix} are null. Then $x^TQ_{i}x\leq 0$, $\forall x\in\Rset^5$. Partitioning $x$ as
	$$	x=\left[
		\renewcommand\arraystretch{1.5} \begin{array}{ccc}
		x_{11} \\ 	\tilde{x}_{2} \\ \tilde{x}_{4}\\
		\end{array}\right]\\$$
	where $\tilde{x}_{11}\in \mathbb{R}$, $\tilde{x}_{2}\in \mathbb{R}^2$, $\tilde{x}_{4}\in \mathbb{R}^2$.\\
	We obtain 
	$$ x^TQ_{i}x = {\tilde{x}_{2}}^T\QQ_{22,i}^C\tilde{x}_{2}+{\tilde{x}_{4}}^T\QQ_{44,i}^V\tilde{x}_{4}.$$
	For $\tilde{x}_{2}=0$ and $\tilde{x}_{4}\neq 0$, one has $$ x^TQ_{i}x = {\tilde{x}_{4}}^T\QQ_{44,i}^V\tilde{x}_{4}\leq 0, \text{ }\forall \tilde{x}_{4}\in\Rset^2$$
	which means 
	$$\QQ_{44,i}^V\leq 0$$
	Setting $\tilde{x}_{4}=0$ and $\tilde{x}_{2}\neq 0$, one has $$ x^TQ_{i}x = {\tilde{x}_{2}}^T\QQ_{22,i}^C\tilde{x}_{2}\leq 0,\text{ }\forall \tilde{x}_{2}\in\Rset^2$$
	which means 
	$$\QQ_{22,i}^C\leq 0$$
\end{proof}
\begin{rmk}
	\label{det_remark}
	Because the block of matrix $Q_{i}$ as $\QQ_{22,i}^C$ and $\QQ_{44,i}^V$ belong to $\mathbb{R}^{2\times2}$, based on Lemma \ref{le:negative_semidefinite_matrix}, $\QQ_{22,i}^C\leq 0$ and $\QQ_{44,i}^V\leq 0$, the determinants of $\QQ_{22,i}^C$ and $\QQ_{44,i}^V$ are nonnegative. 
\end{rmk}
\begin{prop}
	\label{prop:Struct_of_P_Q}
	Under Assumption \ref{ass:ctrl_module}, then $P_{i}$ and $Q_{i}$ have the following structure:
\begin{subequations}
	\label{eq:pi_qi_modele}
	\begin{empheq}{align}
P_{i}&=\label{eq:pi_diagonal_module}
\left
[ \renewcommand\arraystretch{1.8}
\begin{array}{c|cc|cc}
\eta_{i} & 0 & 0 & 0 & 0\\
\hline
0 & p_{22,i}^C & 0& 0 & 0\\
0 & 0 & \frac{k_{3,i}^C}{L_{ti}^C} p_{22,i}^C& 0 & 0\\
\hline
0 & 0 & 0 & \frac{L_{ti}^V}{C_{ti}^V}\frac{(k_{2,i}^V-R_{ti}^V)}{h_{i}} & \frac{L_{ti}^V}{C_{ti}}\frac{k_{3,i}^V}{h_{i}}\\
0 & 0 & 0 &\frac{L_{ti}^V}{C_{ti}}\frac{k_{3,i}^V}{h_{i}} & \frac{1}{C_{ti}}\frac{k_{3,i}^V(k_{1,i}^V-1)}{h_{i}}
\end{array}\right]\\
\hspace{2mm}
Q_{i}&=\label{eq:qi_diagonal_module}
\left
[ \renewcommand\arraystretch{1.8}
\begin{array}{c|cc|cc}
0 & 0 & 0 & 0 & 0 \\
\hline
0 & 2\frac{(k_{2,i}^C-R_{ti}^C)}{L_{ti}^C} p_{22,i}^C & 0 & 0 & 0\\
0 & 0 & 0& 0 & 0\\
\hline\
0 & 0 & 0 & 2\frac{(k_{2,i}^V-R_{ti}^V)^2}{C_{ti}h_{i}} & 2\frac{(k_{2,i}^V-R_{ti}^V)k_{3,i}^V}{C_{ti}h_{i}}\\
0 & 0 & 0 & 2\frac{(k_{2,i}^V-R_{ti}^V)k_{3,i}^V}{C_{ti}h_{i}} & 2\frac{(k_{3,i}^V)^2}{C_{ti}h_{i}}
\end{array}\right]
\end{empheq}
\end{subequations}
where $h_{i}= L_{ti}^Vk_{3,i}^V-(k_{1,i}^V-1)(k_{2,i}^V-R_{ti}^V)$.
Moreover, if $P_{i}>0$, $Q_{i}\leq0$ and $Q_{i}\neq0$, one has
\begin{equation}
\label{eq:condition_module}
\hspace{0mm}\quad\left\lbrace
\begin{aligned}
	k_{1,i}^C& <1\\
	k_{2,i}^C  &<R_{ti}^C\\
   k_{3,i}^C  &>0
\end{aligned}
\right.
\hspace{10mm}\quad\left\lbrace
\begin{aligned}
k_{1,i}^V& <1 \\
k_{2,i}^V  &<R_{ti}^V\\
 0&<k_{3,i}^V<\frac{1}{L_{ti}^V}(k_{1,i}^V-1)(k_{2,i}^V-R_{ti}^V)
\end{aligned}
\right.
\end{equation}
\begin{proof}
		Based on \eqref{eq:pstruct_module} and \eqref{eq:Fi_module}, the upper middle block of \eqref{fullQ_module} $\QQ_{12,i}^C$ can be written as 
	\begin{equation}	
	[\FF_{21,i}^C]^T\PP_{22,i}^C+\eta_{i}\FF_{12,i}^C \label{eq:offdiagonal_Q12_module}= 
	\left
	[ \renewcommand\arraystretch{1.8}
	\begin{array}{c|cc}
	\frac{(k_{1,i}^C-1)}{L_{ti}^C}p_{22,i}^C+\frac{1}{C_{ti}}\eta_{i} & \frac{(k_{1,i}^C-1)}{L_{ti}^C}p_{23,i}^C 
	\end{array}\right],
	\end{equation}
	From Proposition \ref{pr:prop_Q_semi}, $\QQ_{12,i}^C$ should be equal to zero vector which means
	\begin{subequations}
		\label{eq:condition1_module}
		\begin{empheq}[left=\empheqlbrace]{align}
		\label{eq:condition1_1_module}
		\frac{(k_{1,i}^C-1)}{L_{ti}^C}p_{22,i}^C &= -\frac{1}{C_{ti}}\eta_{i}\\
		\label{eq:condition1_2_module}
		\frac{(k_{1,i}^C-1)}{L_{ti}^C}p_{23,i}^C   &= 0
		\end{empheq}	
	\end{subequations}
	Because $\eta_{i}$ is positive, thus it derives that
	\begin{subequations}
		\label{eq:condition_detail_1_module}
		\begin{empheq} [left=\empheqlbrace]{align}
		\label{eq:condition_detail_k1_module}
		k_{1,i}^C &<1\\
		\label{eq:condition_detail_p22_module}
		p_{23,i}^C &=0
		\end{empheq}
	\end{subequations}
	With the results \eqref{eq:condition_detail_1_module}, the diagonal item of \eqref{fullQ_module} $\QQ_{22,i}^C$ can be direct recalculated as
	\begin{equation}
	[\FF_{22,i}^C]^T\PP_{22,i}^C+\PP_{22,i}^C\FF_{22,i}^C \label{eq:diagonal_Q22_module}= 
	\left
	[ \renewcommand\arraystretch{1.8}
	\begin{array}{c|cc}
	2\frac{(k_{2,i}^C-R_{ti}^C)}{L_{ti}^C} p_{22,i}^C & -p_{33,i}^C+\frac{k_{3,i}^C}{L_{ti}^C} p_{22,i}^C\\
	\hline
	-p_{33,i}^C+\frac{k_{3,i}^C}{L_{ti}^C} p_{22,i}^C & 0\\
	\end{array}\right]
	\end{equation}
	Again from Proposition 1, the off diagonal item of \eqref{eq:diagonal_Q22_module} should be equal to zero which means
	\begin{equation}
	\frac{k_{3,i}^C}{L_{ti}^C} p_{22,i}^C \label{eq:condition_2_module}=p_{33,i}^C\\ 
	\end{equation}
	Thus,based on \eqref{eq:condition_2_module} and $P_{i}>0$
	\begin{equation}
	k_{3,i}^C \label{eq:condition_2.2_module}>0\\
	\end{equation}
	From Proposition \ref{pr:prop_Q_semi}, $Q_{i}$ should be at least negative semidefinite, thus
	\begin{equation}
	k_{2,i}^C \label{eq:condition_2.3_module}< R_{ti}^C\\
	\end{equation}
	Because the upper left corner $3\times3$ matrix of $P_{i}$ is diagonal matrix and the matrix $P_{i}$ is positive definite, one has 
	\begin{equation}
	\label{p44_V}
	p_{44,i}^V>0
	\end{equation} 
	Based on \eqref{eq:pstruct_module} and \eqref{eq:Fi_module}, the off diagonal of \eqref{fullQ_module} $\QQ_{14,i}^V$ can be written as 
	\begin{equation}	
	[\FF_{41,i}^V]^T\PP_{44,i}^V+\eta_{i}\FF_{14,i}^V \label{eq:offdiagonal_Q14_module}= 
	\left
	[ \renewcommand\arraystretch{1.8}
	\begin{array}{c|cc}
	\frac{(k_{1,i}^V-1)}{L_{ti}^V}p_{44,i}^V-p_{45,i}^V+\frac{1}{C_{ti}}\eta_{i} & \frac{(k_{1,i}^V-1)}{L_{ti}^V}p_{45,i}^V-p_{55,i}^V 
	\end{array}\right],
	\end{equation}
	From Proposition \ref{pr:prop_Q_semi}, $\QQ_{14,i}^V$ is a zero vector which means
	\begin{subequations}
	\label{eq:relationship_p45_p44_p55}
	\begin{empheq}[left=\empheqlbrace]{align}
	\label{eq:relationship_p45_p44_1}
	p_{45,i}^V & =\frac{(k_{1,i}^V-1)}{L_{ti}^V}p_{44,i}^V+\frac{1}{C_{ti}}\eta_{i} \\
	\label{eq:relationship_p55_p45}
	p_{55,i}^V & = \frac{(k_{1,i}^V-1)}{L_{ti}^V}p_{45,i}^V 
	\end{empheq}	
	\end{subequations}
    Then by explicitly computation of $\QQ_{44,i}^V$, we can derive that
    \begin{equation}	
    [\FF_{44,i}^V]^T\PP_{44,i}^V+\PP_{44,i}^V\FF_{44,i}^V \label{eq:diagonal_Q44_module}= 
    \left
    [ \renewcommand\arraystretch{1.8}
    \begin{array}{c|cc}
    2\frac{(k_{2,i}^V-R_{ti}^V)}{L_{ti}^V}p_{44,i}^V & \frac{(k_{2,i}^V-R_{ti}^V)}{L_{ti}^V}p_{45,i}^V+\frac{k_{3,i}^V}{L_{ti}^V}p_{44,i}^V\\
    \hline
    \frac{(k_{2,i}^V-R_{ti}^V)}{L_{ti}^V}p_{45,i}^V+\frac{k_{3,i}^V}{L_{ti}^V}p_{44,i}^V & 2\frac{k_{3,i}^V}{L_{ti}^V}p_{45,i}^V
    \end{array}\right],
    \end{equation}
    Based on the Lemma \ref{le:negative_semidefinite_matrix} and eq. \eqref{p44_V}
    \begin{equation}
    \label{condition_K2_V}
    2\frac{(k_{2,i}^V-R_{ti}^V)}{L_{ti}^V}p_{44,i}^V\leq0 \Longrightarrow k_{2,i}^V \leq R_{ti}^V
   \end{equation}
  Computing the determinant of $\QQ_{44,i}^V$, one obtains
   \begin{equation}
   \label{determinant_of_Q44}
   	det(\QQ_{44,i}^V)=-\left[\frac{(k_{2,i}^V-R_{ti}^V)}{L_{ti}^V}p_{45,i}^V-\frac{k_{3,i}^V}{L_{ti}^V}p_{44,i}^V\right]^2   	
   \end{equation}
    Based on the Lemma \ref{le:negative_semidefinite_matrix}, the second principal minor of $\QQ_{44,i}^V$ which is also the determinant $\QQ_{44,i}^V$ is nonnegative. From \eqref{determinant_of_Q44}, the maximum value is zero, thus the determinant of $\QQ_{44,i}^V$ should be equal to zero. It follows that
    \begin{equation}
    	\label{eq:relationship_p44_p45_2}
    	\frac{(k_{2,i}^V-R_{ti}^V)}{L_{ti}^V}p_{45,i}^V=\frac{k_{3,i}^V}{L_{ti}^V}p_{44,i}^V \Longrightarrow p_{44,i}^V=\frac{(k_{2,i}^V-R_{ti}^V)}{k_{3,i}^V}p_{45,i}^V
    \end{equation}
    By solving the system of equation given by \eqref{eq:relationship_p45_p44_p55} and \eqref{eq:relationship_p44_p45_2}, it derives that
	\begin{subequations}
	 \label{eq:P44}
	 \begin{empheq}[left=\empheqlbrace]{align}
	 \label{eq:p44}
    	p_{44,i}^V & =\frac{L_{ti}^V}{C_{ti}^V}\frac{(k_{2,i}^V-R_{ti}^V)}{h_{i}} \\
  	\label{eq:p45}
 	p_{45,i}^V & = \frac{L_{ti}^V}{C_{ti}}\frac{k_{3,i}^V}{h_{i}}\\
 	\label{eq:p55}
 	p_{55,i}^V & = \frac{1}{C_{ti}}\frac{k_{3,i}^V(k_{1,i}^V-1)}{h_{i}}
	\end{empheq}	
      \end{subequations}
  where $h_{i}= L_{ti}^Vk_{3,i}^V-(k_{1,i}^V-1)(k_{2,i}^V-R_{ti}^V)$.
  
\vspace{2mm}
  Because $P_{44,i}^V$ is positive definite, all its principal minor should be positive definite. Then 
  \begin{itemize}
  	\item  $det\left(\renewcommand\arraystretch{1.8}\frac{L_{ti}^V}{C_{ti}^V}\frac{(k_{2,i}^V-R_{ti}^V)}{h_{i}}\right)>0$, combining this result with \eqref{condition_K2_V}, the feasible parameters $k_{2,i}^V$ and $h_{i}$ set should be 	
  	$Z_{1}={\{ k_{2,i}^V< R_{ti}^V \}} \cap {\{h_{i}<0\}}$
  	\item  $det\left(\left
  [ \renewcommand\arraystretch{1.8}
  	\begin{array}{c|cc}
  	\frac{L_{ti}^V}{C_{ti}^V}\frac{(k_{2,i}^V-R_{ti}^V)}{h_{i}} & \frac{L_{ti}^V}{C_{ti}}\frac{k_{3,i}^V}{h_{i}}\\
  	\hline
  	\frac{L_{ti}^V}{C_{ti}}\frac{k_{3,i}^V}{h_{i}} & \frac{1}{C_{ti}}\frac{k_{3,i}^V(k_{1,i}^V-1)}{h_{i}}
  	\end{array}\right ]\right)= -\frac{L_{ti}^VK_{3,i}^V}{C_{ti}^2h_{i}}>0$, considering this result, the feasible parameters $k_{3,i}^V$ and $h_{i}$ set should be
  	\vspace{2mm}
  	$Z_{2}=\{\{ k_{3,i}^V < 0 \} \cap \{h_{i}>0\}\}\cup\{\{ k_{3,i}^V > 0 \} \cap \{h_{i}<0\}\}$
  \end{itemize}
  By combing the $Z_{1}$ and $Z_{2}$ together, one has
  \begin{equation}
  	\label{eq:Z}
  	\pmb Z = \{Z_{1}\}\cap\{Z_{2}\} =  \{ k_{2,i}^V< R_{ti}^V \} \cap \{ k_{3,i}^V > 0 \} \cap \{h_{i}<0\}
  \end{equation}
  Because $ k_{3,i}^V > 0$, the set $\{h_{i}<0\}$ can be further split. Then, combining the set with \eqref{eq:Z}, it can derive that
  \begin{equation}
  \label{eq:Z3}
   \pmb Z=\{ k_{1,i}^V < 1 \}\cap \{ k_{2,i}^V< R_{ti}^V \}\cap \{ 0<k_{3,i}^V< \frac{1}{L_{ti}^V}(k_{1,i}^V-1)(k_{2,i}^V-R_{ti}^V) \} 
  \end{equation}
  Thus, \eqref{eq:pi_qi_modele} can be derived by combining the result in \eqref{eq:condition_detail_p22_module}, \eqref{eq:condition_2_module} and\eqref{eq:P44}. Then, combining the results in \eqref{eq:condition_detail_k1_module}, \eqref{eq:condition_2.2_module}, \eqref{eq:condition_2.3_module} and \eqref{eq:Z3}, the set for control coefficients \eqref{eq:condition_module} is derived.	
\end{proof}
\end{prop}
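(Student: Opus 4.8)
The plan is to mirror the scalar-block argument of Proposition~\ref{pr:pr_P_Q}, exploiting the block-diagonal structure of $P_i$ in \eqref{eq:pstruct_module} together with the sign constraints on $Q_i$ that follow from Proposition~\ref{pr:prop_Q_semi} and Lemma~\ref{le:negative_semidefinite_matrix}. First I would use that the $(1,1)$ entry of $Q_i$ in \eqref{fullQ_module} is zero: since $Q_i\leq 0$, Proposition~\ref{pr:prop_Q_semi} forces the entire first block-row and block-column to vanish, i.e. both off-diagonal blocks $\QQ_{12,i}^C$ and $\QQ_{14,i}^V$ are null. Setting $\QQ_{12,i}^C=0$ reproduces verbatim the CDGU computation: because $\eta_i>0$ and $p_{22,i}^C>0$, the first component gives $k_{1,i}^C<1$ and the second gives $p_{23,i}^C=0$. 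With $p_{23,i}^C=0$ the block $\QQ_{22,i}^C$ has a zero diagonal entry, so Proposition~\ref{pr:prop_Q_semi} again kills its off-diagonal, yielding $p_{33,i}^C=\frac{k_{3,i}^C}{L_{ti}^C}p_{22,i}^C$; positivity of $P_i$ then forces $k_{3,i}^C>0$, and $Q_i\neq0$ forces the surviving entry $2\frac{(k_{2,i}^C-R_{ti}^C)}{L_{ti}^C}p_{22,i}^C<0$, i.e. $k_{2,i}^C<R_{ti}^C$. This settles the CDGU column of \eqref{eq:condition_module} and fixes the upper-left block of $P_i$ and $Q_i$ in \eqref{eq:pi_qi_modele}.

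The genuinely new work is the VDGU block, which unlike $\QQ_{22,i}^C$ is generically \emph{full}, so Proposition~\ref{pr:prop_Q_semi} cannot be invoked to diagonalize it. Imposing $\QQ_{14,i}^V=0$ gives the two linear relations \eqref{eq:relationship_p45_p44_p55} expressing $p_{45,i}^V$ and $p_{55,i}^V$ through $p_{44,i}^V$, $\eta_i$ and the gains. Substituting these into the explicit form of $\QQ_{44,i}^V$, I would read off from its $(1,1)$ entry that $p_{44,i}^V>0$ (from $P_i>0$) together with $\QQ_{44,i}^V\leq 0$ forces $k_{2,i}^V\leq R_{ti}^V$. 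The key trick is then to compare two expressions for $\det\QQ_{44,i}^V$: by Lemma~\ref{le:negative_semidefinite_matrix} and Remark~\ref{det_remark} it must be nonnegative, whereas the direct computation \eqref{determinant_of_Q44} shows it equals $-[\,\cdot\,]^2\leq 0$. Hence the determinant vanishes, giving the extra relation \eqref{eq:relationship_p44_p45_2}. Solving the resulting linear system for $p_{44,i}^V$, $p_{45,i}^V$, $p_{55,i}^V$ produces the closed forms in \eqref{eq:P44}, with the denominator $h_i=L_{ti}^Vk_{3,i}^V-(k_{1,i}^V-1)(k_{2,i}^V-R_{ti}^V)$.

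It remains to translate $\PP_{44,i}^V>0$ into inequalities on the gains, which I would do via Sylvester's criterion. The leading minor $p_{44,i}^V>0$ combined with $k_{2,i}^V\leq R_{ti}^V$ yields $Z_1=\{k_{2,i}^V<R_{ti}^V\}\cap\{h_i<0\}$, while the $2\times 2$ minor equals $-\frac{L_{ti}^Vk_{3,i}^V}{C_{ti}^2h_i}$, whose positivity forces $k_{3,i}^V$ and $h_i$ to have opposite signs, giving $Z_2$. Intersecting $Z_1$ and $Z_2$ collapses the sign possibilities to $\{k_{2,i}^V<R_{ti}^V\}\cap\{k_{3,i}^V>0\}\cap\{h_i<0\}$, and finally unfolding $h_i<0$ under $k_{3,i}^V>0$ gives the band $0<k_{3,i}^V<\frac{1}{L_{ti}^V}(k_{1,i}^V-1)(k_{2,i}^V-R_{ti}^V)$; since the right-hand side must be positive while $k_{2,i}^V-R_{ti}^V<0$, the factor $k_{1,i}^V-1<0$ appears automatically, recovering $k_{1,i}^V<1$ and the whole VDGU column of \eqref{eq:condition_module}.

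I expect the main obstacle to be the VDGU block analysis: unlike the CDGU case there is no zero diagonal entry to exploit, so the argument must pivot on the clash between the sign of $\det\QQ_{44,i}^V$ forced by semidefiniteness and its intrinsically nonpositive algebraic form, followed by the somewhat delicate bookkeeping of sign regions in the Sylvester analysis that ultimately pins down all three VDGU inequalities.
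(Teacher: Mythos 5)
Your proposal is correct and follows essentially the same route as the paper's own proof: you null the first block-row/column of $Q_i$ via Proposition \ref{pr:prop_Q_semi}, reuse the CDGU computation verbatim for the upper blocks, force $\det\QQ_{44,i}^V=0$ by playing the nonnegativity from Lemma \ref{le:negative_semidefinite_matrix} against the explicit form $-[\,\cdot\,]^2$ in \eqref{determinant_of_Q44}, solve the resulting linear system for $\PP_{44,i}^V$, and finish with Sylvester's criterion and the same sign-region bookkeeping ($Z_1\cap Z_2$, then unfolding $h_i<0$ to recover $k_{1,i}^V<1$). The only difference is cosmetic ordering of steps, so there is nothing to add.
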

\begin{lem}
	\label{le:ker_F33_module}
		Let Assumptions \ref{ass:ctrl_module} and Proposition \ref{prop:Struct_of_P_Q} hold, let us define $h_i(v_i) = v_i^T\QQ_{44,i}^Vv_i$, with
		$v_i\in\mathbb{R}^2$. If $Q_{i}\leq 0$ and $Q_{i}\neq 0$ is guaranteed, then
		\begin{equation*}
		h_i(\bar{v}_i)  =0 \Longleftrightarrow\bar{v}_i\in\mathrm{Ker}(\FF_{44,i}^V).
		\end{equation*}
\end{lem}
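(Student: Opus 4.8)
The plan is to exploit the explicit closed forms for $\QQ_{44,i}^V$ and $\FF_{44,i}^V$ already obtained in Proposition~\ref{prop:Struct_of_P_Q} (eq.~\eqref{eq:qi_diagonal_module}) and in \eqref{eq:Fi_module}, and to observe that $\QQ_{44,i}^V$ is a rank-one matrix whose generating direction is exactly the single nontrivial row of $\FF_{44,i}^V$. This mirrors the argument used for Lemma~\ref{le:le_2} in the CDGU case. I introduce the shorthand $w_i = [\,k_{2,i}^V-R_{ti}^V,\ k_{3,i}^V\,]^T\in\Rset^2$ and the scalar $c_i = \tfrac{2}{C_{ti}h_i}$ (here $h_i$ denotes the scalar $L_{ti}^Vk_{3,i}^V-(k_{1,i}^V-1)(k_{2,i}^V-R_{ti}^V)$). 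Reading off the lower-right block of \eqref{eq:qi_diagonal_module}, one checks directly that $\QQ_{44,i}^V = c_i\, w_i w_i^T$, so the quadratic form factors as a perfect square up to scale: $h_i(v_i)=v_i^T\QQ_{44,i}^V v_i = c_i\,(w_i^T v_i)^2$.

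Next I read $\FF_{44,i}^V$ off \eqref{eq:Fi_module}: its first row is $\tfrac{1}{L_{ti}^V}w_i^T$ and its second row is zero. Since $L_{ti}^V>0$, it follows that $\FF_{44,i}^V v_i = 0$ if and only if $w_i^T v_i = 0$, hence $\mathrm{Ker}(\FF_{44,i}^V)=\{v_i\in\Rset^2 : w_i^T v_i = 0\}$. Combining the two observations, $h_i(\bar v_i)=c_i\,(w_i^T\bar v_i)^2$. The reverse implication is immediate: if $\bar v_i\in\mathrm{Ker}(\FF_{44,i}^V)$ then $w_i^T\bar v_i=0$, whence $h_i(\bar v_i)=0$ irrespective of $c_i$. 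For the forward implication I use that $c_i\neq 0$: this is where the hypotheses $Q_i\leq 0$ and $Q_i\neq 0$ enter, since through Proposition~\ref{prop:Struct_of_P_Q} they yield the strict inequalities \eqref{eq:condition_module}, in particular $h_i<0$, which together with $C_{ti}>0$ gives $c_i\neq 0$ (indeed $c_i<0$). Then $h_i(\bar v_i)=0$ forces $(w_i^T\bar v_i)^2=0$, i.e. $w_i^T\bar v_i=0$, i.e. $\bar v_i\in\mathrm{Ker}(\FF_{44,i}^V)$.

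The main (and essentially only) obstacle is the bookkeeping needed to confirm the rank-one factorization $\QQ_{44,i}^V=c_iw_iw_i^T$ from the explicit entries and to verify that the nonzero row of $\FF_{44,i}^V$ is proportional to the generating vector $w_i$; this alignment is precisely what makes $\mathrm{Ker}(\FF_{44,i}^V)$ coincide with the null set of the quadratic form. The nontriviality hypothesis $Q_i\neq 0$ is used only to guarantee $c_i\neq 0$ (equivalently $h_i\neq 0$), which is already secured by \eqref{eq:condition_module}, so no further case analysis is required.
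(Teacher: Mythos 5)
Your proof is correct. Note that the paper does not actually prove this lemma within its own text: its ``proof'' is a one-line deferral to Proposition 3 of \cite{7934339}, so your argument supplies exactly the computation that the paper outsources. Every ingredient you use checks out against the paper's own formulas: the rank-one factorization $\QQ_{44,i}^V = \tfrac{2}{C_{ti}h_i}\, w_i w_i^T$ with $w_i = [\,k_{2,i}^V-R_{ti}^V,\ k_{3,i}^V\,]^T$ matches the lower-right block of \eqref{eq:qi_diagonal_module} entry by entry; the identification $\mathrm{Ker}(\FF_{44,i}^V)=\{v\in\Rset^2 : w_i^T v = 0\}$ agrees with the explicit kernel computation \eqref{eqn:Ker_F_33_module} that the paper itself performs later, inside the proof of Proposition \ref{prop:quadr_form_module}; and the nondegeneracy $c_i\neq 0$ follows, as you say, from $h_i<0$, which is part of \eqref{eq:condition_module} delivered by Proposition \ref{prop:Struct_of_P_Q} under the lemma's hypotheses (indeed $h_i\neq 0$ is needed even for the entries of \eqref{eq:qi_diagonal_module} to be well defined, so no circularity arises). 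The perfect-square identity $h_i(v)=c_i\,(w_i^T v)^2$ then yields both implications at once, and your argument also parallels the role that Lemma \ref{le:le_2} plays in the CDGU case, which is stated in the paper without proof as well. One cosmetic remark: the symbol $h_i$ is overloaded (the quadratic form of the lemma versus the scalar $L_{ti}^Vk_{3,i}^V-(k_{1,i}^V-1)(k_{2,i}^V-R_{ti}^V)$); this collision is inherited from the paper and you flagged it, but in a final writeup it would be cleaner to rename one of the two.
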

\begin{proof}
	The proof is same as the proof for Proposition 3 in \cite{7934339}.
\end{proof}
\begin{prop}
	\label{prop:quadr_form_module}
	Let $g_i(w_i) = w_i^T Q_i w_i$. Under the same Assumptions of Lemma \ref{le:ker_F33_module}, $\forall i\in\DD$, and Proposition \ref{prop:Struct_of_P_Q} and Lemma \ref{le:ker_F33_module}, only vectors $\bar w_i$ in the form
	\begin{equation*}
	\bar{w}_i =\left[ \begin{array}{ccccc}
	\alpha_i& 0 & \gamma_i&
	\beta_i  &
	\delta_i\beta_i
	\end{array}\right]^T
	\end{equation*} with $\alpha_i$, $\gamma_i$, $\beta_i\in\Rset$, and $\delta_i =
	-\frac{k_{2,i}^V-R_{ti}^V}{k_{3,i}^V}$, fulfill
	\begin{equation}
	\label{eq:vQv=0_module}
	g_i(\bar{w}_i) = \bar{w}_i^T Q_i \bar{w}_i=0.
	\end{equation}
\end{prop}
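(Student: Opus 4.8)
The plan is to exploit the fully explicit block form of $Q_i$ obtained in Proposition \ref{prop:Struct_of_P_Q}, eq. \eqref{eq:qi_diagonal_module}, which makes the quadratic form $g_i(w_i)=w_i^TQ_iw_i$ decouple. Writing $w_i=[w_{i,1},w_{i,2},w_{i,3},w_{i,4},w_{i,5}]^T$ and grouping $\tilde w_4=[w_{i,4},w_{i,5}]^T$, the vanishing first and third rows/columns of $Q_i$ in \eqref{eq:qi_diagonal_module} immediately eliminate $w_{i,1}$ and $w_{i,3}$, leaving
\begin{equation*}
g_i(w_i)=2\frac{(k_{2,i}^C-R_{ti}^C)}{L_{ti}^C}p_{22,i}^C\,w_{i,2}^2+\tilde w_4^T\QQ_{44,i}^V\tilde w_4 .
\end{equation*}
Both summands are nonpositive: the scalar coefficient is strictly negative since $k_{2,i}^C<R_{ti}^C$ and $p_{22,i}^C>0$ by Proposition \ref{prop:Struct_of_P_Q}, while $\QQ_{44,i}^V\le0$ by Lemma \ref{le:negative_semidefinite_matrix}. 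Hence $g_i(w_i)=0$ forces each summand to vanish separately.

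I would then dispatch the two vanishing conditions in turn. Strict negativity of the scalar coefficient gives $w_{i,2}=0$. For the remaining condition $\tilde w_4^T\QQ_{44,i}^V\tilde w_4=0$, I would invoke Lemma \ref{le:ker_F33_module} directly, which asserts that $h_i(\tilde w_4)=\tilde w_4^T\QQ_{44,i}^V\tilde w_4=0$ holds if and only if $\tilde w_4\in\mathrm{Ker}(\FF_{44,i}^V)$.

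The last step is to read off that kernel explicitly. From the definition of $F_i$ in \eqref{eq:Fi_module} one has $\FF_{44,i}^V=\left[\begin{smallmatrix}\frac{(k_{2,i}^V-R_{ti}^V)}{L_{ti}^V}&\frac{k_{3,i}^V}{L_{ti}^V}\\0&0\end{smallmatrix}\right]$, so $\tilde w_4$ lies in its kernel exactly when $(k_{2,i}^V-R_{ti}^V)w_{i,4}+k_{3,i}^Vw_{i,5}=0$, i.e. $w_{i,5}=-\frac{k_{2,i}^V-R_{ti}^V}{k_{3,i}^V}w_{i,4}=\delta_iw_{i,4}$, which is well defined because $k_{3,i}^V>0$ by \eqref{eq:condition_module}. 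Setting the free coordinates $w_{i,1}=\alpha_i$, $w_{i,3}=\gamma_i$ and $w_{i,4}=\beta_i$ reproduces exactly $\bar w_i=[\alpha_i,0,\gamma_i,\beta_i,\delta_i\beta_i]^T$. The converse is immediate: substituting a vector of this form into the displayed expression for $g_i$ gives $0$, since its second entry is zero and its last two entries lie in $\mathrm{Ker}(\FF_{44,i}^V)$.

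I expect the only delicate point to be the justification that two nonpositive contributions must vanish simultaneously, which rests jointly on the strict sign of the scalar coefficient and on $\QQ_{44,i}^V\le0$ from Lemma \ref{le:negative_semidefinite_matrix}, together with the correct interpretation of the kernel of the \emph{nonsymmetric} block $\FF_{44,i}^V$ through Lemma \ref{le:ker_F33_module}; everything else reduces to a direct inspection of the explicit entries of $Q_i$.
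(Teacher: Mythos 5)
Your proof is correct, and it rests on the same two pillars as the paper's: the explicit block structure of $Q_i$ from Proposition~\ref{prop:Struct_of_P_Q} (eq.~\eqref{eq:qi_diagonal_module}) and Lemma~\ref{le:ker_F33_module} to translate $\tilde w_4^T\QQ_{44,i}^V\tilde w_4=0$ into $\tilde w_4\in\mathrm{Ker}(\FF_{44,i}^V)$. Where you differ is in how the vanishing of $g_i$ is dispatched: the paper argues via stationarity, observing that since $Q_i\le 0$ any zero of $g_i$ is a maximizer and must therefore satisfy $Q_i\bar w_i=0$, extracts from this the family $[\alpha,0,\gamma,0,0]^T$, then separately considers vectors whose last two components lie in $\mathrm{Ker}(\FF_{44,i}^V)$, and finally ``merges'' the two families. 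Your route--splitting $g_i$ into the two nonpositive summands $2\frac{(k_{2,i}^C-R_{ti}^C)}{L_{ti}^C}p_{22,i}^C\,w_{i,2}^2$ and $\tilde w_4^T\QQ_{44,i}^V\tilde w_4$ and forcing each to vanish separately--is more direct and, in fact, tighter: the paper's intermediate description of the second family (eq.~\eqref{eq:tilde_v_module}, with $w_2\in\Rset^2$ apparently unconstrained) is imprecise as written, since a nonzero first component of $w_2$ would make $g_i$ strictly negative; the constraint $w_{i,2}=0$ only reappears implicitly at the merging step. Your decomposition makes that constraint explicit from the start, so nothing needs to be merged, and the ``only if'' direction is airtight. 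Both approaches buy the same conclusion at essentially the same cost; yours is the cleaner write-up of the same underlying argument.
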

\begin{proof}
	In the sequel, the subscript
	$i$ is omitted for convenience. From \eqref{eq:pi_diagonal_module}, $g(w)$ is equal to 
	\begin{equation}
	\label{eq:quadr_form_module}
	\left[ \begin{array}{c|c|c} 
	w_1 & {w}_2^T & {w}_3^T
	\end{array}
	\right]
\left
[ \renewcommand\arraystretch{1.6}
\begin{array}{c|cc|cc}
0 & 0 & 0 & 0 & 0 \\
\hline
0 & 2\frac{(k_{2}^C-R_{t}^C)}{L_{t}^C} p_{22}^C & 0 & 0 & 0\\
0 & 0 & 0& 0 & 0\\
\hline\
0 & 0 & 0 & q_{44}^V & q_{45}^V\\
0 & 0 & 0 & q_{45}^V & q_{55}^V
\end{array}\right]\left[ \begin{array}{c}
	w_1  \\
	\hline
	{w}_2\\
	\hline
	{w}_3
	\end{array}\right],
	\end{equation} 
	where $ w_2,w_3\in\Rset^2$. Since $Q$ is
	negative semidefinite, the 
	vectors $\bar w$ satisfying \eqref{eq:vQv=0_module} also maximize $g(\cdot)$. Hence, it must hold $ \frac{\mathrm{d}g}{\mathrm{d}w}(\bar w)
	= Q\bar w=0$, i.e.
	\begin{equation}
	\label{eq:maximum_module}
\left
[ \renewcommand\arraystretch{1.6}
\begin{array}{c|cc|cc}
0 & 0 & 0 & 0 & 0 \\
\hline
0 & 2\frac{(k_{2}^C-R_{t}^C)}{L_{t}^C} p_{22}^C & 0 & 0 & 0\\
0 & 0 & 0& 0 & 0\\
\hline\
0 & 0 & 0 & q_{44}^V & q_{45}^V\\
0 & 0 & 0 & q_{45}^V & q_{55}^V
\end{array}\right]\left[ \begin{array}{c}
	\bar{w}_1  \\
	\hline
	\bar{{w}}_2\\
	\hline
	\bar{{w}}_3
	\end{array}\right]=0.
	\end{equation}
	Based on the results in Proposition \ref{prop:Struct_of_P_Q}, it is easy to show that, by direct calculation, a set of solutions
	to \eqref{eq:vQv=0_module} and \eqref{eq:maximum_module} is composed of vectors in the form
	\begin{equation}
	\label{eq:first_solution_module}
	\bar{w} = \left[ \begin{array}{ccccc}
	\alpha &
	0&
	\gamma&
	0&
	0
	\end{array}\right]^T ,\quad\alpha,\gamma \in\Rset.
	\end{equation}
	Moreover, from \eqref{eq:quadr_form_module}, we have that \eqref{eq:vQv=0_module} is
	also verified if there exist vectors 
	\begin{equation}
	\label{eq:tilde_v_module}
	\tilde w =\left[ \begin{array}{c|c|c}
	w_1 &
	w_{2}^T &
	{\underline{w}}_3^T
	\end{array}\right]^T,\quad{\underline{w}}_3\neq [0\text{ }0]^T,
	\end{equation}
	such that $w_1\in\Rset$, $w_2\in\Rset^2$and
	\begin{equation}
	\label{eq:second_solution_module}
	{\underline{w}}_3^T
	Q_{44}^V {\underline{w}}_3 =0.
	\end{equation}
	By exploiting the result of Lemma \ref{le:ker_F33_module}, we know that vectors $\underline{w}_3$ fulfilling \eqref{eq:second_solution_module} belong to $\text{Ker}(F_{44}^V)$, which, recalling \eqref{eq:Fi_module}, can be explicitly computed as follows
	\begin{equation}
	\label{eqn:Ker_F_33_module}
	\begin{aligned}
	\mathrm{Ker}(\FF_{44}^V) &=\left\lbrace x\in\Rset^2 : \left[ \begin{array}{cc}
	f_{44}^V  & f_{45}^V\\
	 0 & 0
	\end{array}\right]x = 0\right\rbrace =\\
	&=\left\lbrace x\in\Rset^2 : x = [\text{ } \beta\text{ }\delta\beta\text{ }]^T, \beta\in\Rset,\delta = -\frac{k_{2}^V-R_{t}^V}{k_{3}^V} \right\rbrace .
	\end{aligned}
	\end{equation}
	The proof ends by merging \eqref{eq:first_solution_module} and \eqref{eq:tilde_v_module}, with ${\underline{w}}_3$ as in \eqref{eqn:Ker_F_33_module}.	
\end{proof}
  Consider the overall closed-loop MG cluster model 
  \begin{equation}
  \label{eq:sysaugoverallclosed_module}
  \left\lbrace
  \begin{aligned}
  \mbf{\dot{\hat{x}}}(t) &= (\mbf{\hat{A}+\hat{B}K})\mbf{\hat{x}}(t)+ \mbf{\hat{M}\hd}(t)\\
  \mbf{z}(t)       &= \mbf{\hat{H}\hat{x}}(t)
  \end{aligned}
  \right.
  \end{equation}
  obtained by combining \eqref{eq:sysaugmoduleoverall} and \eqref{eq:ctrldec_module}, with
  $\mbf{K}=\diag(K_{1},\dots,K_{N})$. Considering also the collective Lyapunov function
  \begin{equation}
  \label{eq_coll_module_lyap}
  \VV(\mbf{\hx})=\sum_{i=1}^N\VV_{i}(\hat{x}_{[i]})=\mbf{\hx}^T\mbf{P}\mbf{\hx}
  \end{equation}
  where $\mbf{P}=\diag(P_{1},\dots,P_{N})$. One has $\dot \VV(\mbf{\hx})=\mbf{\hx}^T\mbf{Q}\mbf{\hx}$ where
  \begin{equation}
  \mbf{Q} = (\mbf{\hat{A}}+\mbf{\hat{B}K})^T
  \mbf{P}+\mbf{P}(\mbf{\hat{A}}+\mbf{\hat{B}K}).
  \end{equation}
  A consequence of Proposition 1 is that, under Assumption \ref{ass:ctrl_module}, the matrix  $\mbf{Q}$
  cannot be negative definite. At most, one has
  \begin{equation}
  \label{eq:Lyapeqnoverall_module}
  \mbf{Q} \leq 0.
  \end{equation}
  Moreover, even if $Q_{i} \leq 0$ holds for all $i\in\DD$, the
  inequality \eqref{eq:Lyapeqnoverall_module} might be violated because of the nonzero
  coupling terms $\hat{A}_{ij}$ and load terms $\hat{A}_{load,i}$ in matrix $\mbf{\hat A}$.
  The next result shows that this cannot happen.
    \begin{prop}
  	\label{pr:semidefinite_abc_module}
  	If gains $K_{i}$ are chosen according to \eqref{eq:condition_module} and then $Q_{i}\leq0$ for all $i\in\DD$, then \eqref{eq:Lyapeqnoverall_module} holds.
  \end{prop}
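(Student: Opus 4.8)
The plan is to reproduce, \emph{mutatis mutandis}, the argument used for Proposition~\ref{pr:semidefinite_abc}, the only change being that each diagonal block now lives in $\Rset^{5\times5}$ rather than $\Rset^{3\times3}$. First I would split the overall augmented matrix as
\begin{equation*}
\mbf{\hat A} = \mbf{\hat A_D}+\mbf{\hat A_{\Xi}} + \mbf{\hat A_{L}} + \mbf{\hat A_{C}},
\end{equation*}
with $\mbf{\hat A_D}=\diag(\hat A_{ii})$ the local dynamics, $\mbf{\hat A_C}$ the off-diagonal coupling blocks $\hat A_{ij}$, and $\mbf{\hat A_\Xi}=\diag(\hat A_{\xi i})$, $\mbf{\hat A_L}=\diag(\hat A_{load,i})$ the diagonal neighbour-sum and load terms. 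Then $\mbf Q\le0$ becomes $(a)+(b)+(c)\le0$ with $(a)=\diag(Q_i)$, $(b)=2(\mbf{\hat A_\Xi}+\mbf{\hat A_L})\mbf P$ and $(c)=(\mbf{\hat A_C})^T\mbf P+\mbf P\,\mbf{\hat A_C}$. By hypothesis $Q_i\le0$, so $(a)$ is negative semidefinite.

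The key step is to check that $(b)+(c)$ is supported only on the $(1,1)$ entry of each $5\times5$ block. This rests on two facts I would verify by inspection: (i) the inter-MG coupling $\hat A_{ij}$ and the load $\hat A_{load,i}$ act only on the shared PCC voltage $V_i$, so each has a single nonzero entry, at position $(1,1)$, and similarly for $\hat A_{\xi i}$; and (ii) the matrix $P_i$ supplied by Proposition~\ref{prop:Struct_of_P_Q} is block diagonal with first row and column equal to $[\eta_i\ 0\ 0\ 0\ 0]$. Multiplying, the blocks $2(\hat A_{\xi i}+\hat A_{load,i})P_i$ and $P_i\hat A_{ij}+\hat A_{ji}^TP_j$ therefore vanish except at $(1,1)$, where they equal $-2\tilde\eta_{Li}-2\sum_{j\in\NN_i}\tilde\eta_{ij}$ and $\tilde\eta_{ij}+\tilde\eta_{ji}$, with $\tilde\eta_{ij}=\eta_i/(R_{ij}C_{ti})$ as in \eqref{eq:etaij}. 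In particular the VDGU coordinates $I_{ti}^V,v_i^V$ contribute nothing, since the off-diagonal blocks $\FF_{14,i}^V,\FF_{41,i}^V$ sit away from the first row and column of $P_i$.

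Having localized $(b)+(c)$ to the $(1,1)$ entries, I would delete the remaining four rows and columns of each block and study the resulting $N\times N$ matrix $\LL$, which has exactly the form \eqref{eq:fake_laplacian}. Using $\eta_i=\bar\sigma C_{ti}$ from Assumption~\ref{ass:ctrl_module} gives $\tilde\eta_{ij}=\tilde\eta_{ji}$, so $\LL$ is symmetric with nonnegative off-diagonal entries; hence $-\LL$ is a graph Laplacian plus the positive-definite diagonal carrying the load terms, giving $\LL<0$ by construction. Reinserting the deleted rows and columns shows $(b)+(c)\le0$, and summing with the negative-semidefinite $(a)$ yields $\mbf Q\le0$, i.e. \eqref{eq:Lyapeqnoverall_module}.

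I do not expect a genuine obstacle: the computation transfers essentially verbatim from the CDGU case. The only point requiring care is the bookkeeping for the larger block, namely confirming that the two extra VDGU states never feed into $(b)$ or $(c)$; this is precisely what fact~(ii) above guarantees, and it is the reason the argument never needs the detailed entries of $\PP_{44,i}^V$.
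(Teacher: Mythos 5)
Your proposal is correct and follows the paper's own proof essentially verbatim: the same four-term decomposition of $\mbf{\hat A}$, the same localization of $(b)+(c)$ to the $(1,1)$ entries of each $5\times5$ block via the structure of $P_i$ from Proposition~\ref{prop:Struct_of_P_Q}, and the same reduction to the $N\times N$ matrix $\LL$ whose negative definiteness follows from $\eta_i=\bar\sigma C_{ti}$ making $-\LL$ a Laplacian plus a positive-definite diagonal. The one point you flag as needing care --- that the VDGU coordinates never enter $(b)$ or $(c)$ --- is indeed the only bookkeeping difference from the CDGU case, and your fact~(ii) resolves it exactly as the paper does.
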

  \begin{proof}
  	Consider the following decomposition of matrix $\mbf{\hat A}$
  	\begin{equation}
  	\label{eq:decomposition_module}
  	\mbf{\hat A} = \mbf{\hat A_D}+\mbf{\hat A_{\Xi}} + \mbf{\hat{A}_{L}} + \mbf{\hat A_{C}},
  	\end{equation}
  	where $\mbf{\hat{A}_{D}}=\diag(\hat{A}_{ii},\dots,\hat{A}_{NN})$
  	collects the local dynamics only, $\mbf{\hat A_{C}}$ collects the coupling dynamic representing the off-diagonal items of matrix $\mbf{\hat A}$. Meanwhile, $\mbf{\hat{A}_{\Xi}}=\diag(\hat{A}_{\xi 1},\dots,\hat{A}_{\xi
  		N})$ and $\mbf{\hat{A}_{L}}=\diag(\hat{A}_{load,1},\dots,\hat{A}_{load
  		,N})$ with 
  	\begin{equation*}
  	\renewcommand\arraystretch{1.5}
  	\hat{A}_{\xi i}=\begin{bmatrix}
  	-\sum\limits_{j \in \NN_i}\frac{1}{R_{ij}C_{ti}} &\mbf{0}_{1\times 4}\\
  	\mbf{0}_{4\times 1} & \mbf{0}_{4\times 4}
  	\end{bmatrix},
  	\hat{A}_{load,i}=\begin{bmatrix}
  	-\frac{1}{R_{Li}C_{ti}} &\mbf{0}_{1\times 4}\\
  	\mbf{0}_{4\times 1} & \mbf{0}_{4\times 4}
  	\end{bmatrix}.
  	\end{equation*} 
  	takes into account the dependence of each local state on the neighboring MGs and the local resistive load. According to the decomposition \eqref{eq:decomposition_module}, the inequality \eqref{eq:Lyapeqnoverall_module} is equivalent to show that
  	\begin{small}
  		\begin{equation}
  		\label{eq:Lyap_abc_module}
  		\underbrace{\mbf{(\hat{A}_{D}+\hat{B}K)^TP+
  				P(\hat{A}_{D}+\hat{B}K)}}_{({a})}+\underbrace{\mbf{2({\hat{A}_{\Xi}}+\mbf{\hat{A}_{L}})P}}_{(b)}+\underbrace{\mbf{\hat A_{C}^T
  				P+P\hat{A}_{C}}}_{(c)}\leq 0           				
  		\end{equation}
  	\end{small}
  	By means of Proposition \ref{pr:prop_Q_semi}, matrix $(a) =
  	\mathrm{diag}(Q_{1},\dots,Q_{N})$ is negative semidefinite. Then, the contribution of $(b)+(c)$ in \eqref{eq:Lyap_abc_module} is studied as follows. Matrix $(b)$,
  	by construction, is block
  	diagonal and collects on its diagonal blocks in the form
  	\begin{equation}
  	\label{eq:element_of_b_module}
  	\begin{aligned}
  	2({\hat{A}_{\xi i}}+\hat{A}_{load,i}^C)P_{i} &=
  	\renewcommand\arraystretch{2}
  	\begin{bmatrix}
  	-2\frac{1}{R_{Li}C_{ti}}-2\sum\limits_{j\in\NN_i}\frac{1 }{R_{ij}C_{ti}} & \mbf{0}_{1\times 4} \\
  	\mbf{0}_{4\times 1} & \mbf{0}_{4\times 4}
  	\end{bmatrix}
  	\left[ \begin{array}{c|c|c}
  	\eta_i & \mbf{0}_{1\times 2} & \mbf{0}_{1\times 2}\\
  	\hline
  	\mbf{0}_{2\times 1} & \PP_{22,i}^C & \mbf{0}_{2\times 2}\\
  	\hline
  	\mbf{0}_{2\times 1} & \mbf{0}_{2\times 2} & \PP_{44,i}^V
  	\end{array}\right]=                   \\
  	&=    \begin{bmatrix}
  	-2\tilde\eta_{i}-2\sum\limits_{j\in\NN_i}\tilde\eta_{ij} & \mbf{0}_{1\times 4} \\
  	\mbf{0}_{4\times 1} & \mbf{0}_{4\times 4}
  	\end{bmatrix}
  	\end{aligned}
  	\end{equation}
  	where 
  	\begin{equation}
  	\label{eq:etaij_module}
  	\tilde\eta_{ij} = \frac{\eta_i }{R_{ij}C_{ti}},\text{ }
  	\tilde\eta_{Li} = \frac{\eta_i }{R_{Li}C_{ti}}
  	\end{equation}
  	Considering matrix $(c)$, each the block in
  	position $(i,j)$ is equal to  
  	\begin{displaymath}
  	\left\{ \begin{array}{ll}
  (\hat{A}_{ji})^TP_{j}+P_{i}\hat{A}_{ij} & \hspace{7mm}\mbox{if } j\in\mathcal{N}_i \\
  	0 & \hspace{7mm}\mbox{otherwise}
  	\end{array} \right.
  	\end{displaymath}
  	where
  	\begin{equation}
  	\label{eq:element_of_c_module}
  	\begin{aligned}
  	P_{i}\hat{A}_{ij}+\hat{A}_{ji}^TP_{j} &=
  	\renewcommand\arraystretch{2}
  	\begin{bmatrix}
  	\tilde\eta_{ij}+\tilde\eta_{ji} & \mbf{0}_{1\times 4} \\
  	\mbf{0}_{4\times 1} & \mbf{0}_{4\times 4}
  	\end{bmatrix}.
  	\end{aligned}
  	\end{equation}
  	From \eqref{eq:element_of_b_module} and \eqref{eq:element_of_c_module}, we notice
  	that only the elements in position $(1,1)$ of each $5\times 5$
  	block of $(b)+(c)$ can be different from zero. Hence, in order to
  	evaluate the positive/negative definiteness of the $5N\times 5N$
  	matrix $(b)+(c)$, we can equivalently consider the $N\times N$ matrix as
  	\begin{equation} 
  	\label{eq:fake_laplacian_module}
  	\LL = \left[\begin{array}{cccc}
  	(-2\tilde\eta_{1}-2\sum\limits_{j\in\NN_1}\tilde\eta_{1j}) &\bar\eta_{12} & \dots  & \bar\eta_{1N} \\
  	\bar\eta_{21} & \ddots &\ddots & \vdots \\
  	\vdots &\ddots & (-2\tilde\eta_{N-1}-2\sum\limits_{j\in\NN_{N-1}}\tilde\eta_{N-1j}) & \bar\eta_{N-1N}  \\
  	\bar\eta_{N1}  & \dots  & \bar\eta_{NN-1} & (-2\tilde\eta_{N}-2\sum\limits_{j\in\NN_N}\tilde\eta_{Nj}) 
  	\end{array}
  	\right]
  	\end{equation}
  	obtained by deleting the second to fifth rows and columns in each block
  	of $(b)+(c)$. One has $\LL = \MM+\UU+\GG$, where
  	\begin{equation*}
  	\mathcal{M}= \begin{bmatrix}
  	-2\sum\limits_{j\in\NN_1}\tilde\eta_{1j} & 0 &\dots &
  	0\\
  	0 & -2\sum\limits_{j\in\NN_2}\tilde\eta_{2j} & \ddots  & \vdots\\
  	\vdots &\ddots& \ddots &0 \\
  	0 & \dots & 0 & -2\sum\limits_{j\in\NN_N}\tilde\eta_{Nj} 
  	\end{bmatrix},
  	  	\mathcal{U}= \begin{bmatrix}
  	-2\tilde\eta_{L1} & 0 &\dots &
  	0\\
  	0 & -2\tilde\eta_{L2} & \ddots  & \vdots\\
  	\vdots &\ddots& \ddots &0 \\
  	0 & \dots & 0 & -2\tilde\eta_{LN} 
  	\end{bmatrix}
  	\end{equation*}
  	and
  	\begin{equation}
  	\label{eq:GG_module}
  	\mathcal{G}=\left[\begin{array}{cccc}
  	0&\bar\eta_{12} & \dots  & \bar\eta_{1N} \\
  	\bar\eta_{21} & 0 & \ddots  & \vdots \\
  	\vdots &\ddots & \ddots  & \bar\eta_{N-1N}  \\
  	\bar\eta_{N1}  & \dots & \bar\eta_{N N-1}  & 0
  	\end{array}
  	\right].
  	\end{equation}
  	Notice that each off-diagonal element $\bar\eta_{ij}$ in \eqref{eq:GG_module} is equal to 
  	\begin{equation}
  	\label{eq:bar_eta_module}
  	\bar{\eta}_{ij} =\left\{ \begin{array}{ll}
  	(\tilde\eta_{ij}+\tilde\eta_{ji})& \hspace{7mm}\mbox{if } j\in\mathcal{N}_i \\
  	0 & \hspace{7mm}\mbox{otherwise}
  	\end{array} \right.
  	\end{equation}
  	At this point, from Assumption \ref{ass:ctrl_module}, one obtains that
  	$\tilde\eta_{ij}=\tilde\eta_{ji}$ (see \eqref{eq:etaij_module}) and,
  	consequently, $\bar\eta_{ij} = \bar\eta_{ji}=2\tilde\eta_{ij}$ (see \eqref{eq:bar_eta_module}). Hence, $\LL$ is
  	symmetric and has non negative off-diagonal elements. It follows that $-\LL$ is equal to a Laplacian matrix
  	\cite{grone1990laplacian,godsil2001algebraic} plus an positive definite diagonal matrix. As such, it verifies $\LL
  	< 0$ by construction. By adding the deleted second to fifth rows and columns in each block of $(b)+(c)$, we have shown that
  	\eqref{eq:Lyap_abc_module} holds.
  \end{proof}
   \begin{thm}
  	\label{thm:overall_stability_module}
  	If Assumptions \ref{ass:ctrl_module} is fulfilled,
  	the graph $\GG_{el}$ is connected, control coefficients are chosen according to \eqref{eq:condition_module}, the origin of \eqref{eq:sysaugmoduleoverall} is asymptotically stable.
  \end{thm}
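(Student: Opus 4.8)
The plan is to follow the architecture of the proof of Theorem~\ref{thm:overall_stability}, adapting each ingredient to the five-dimensional per-MG model. First I would apply Proposition~\ref{pr:semidefinite_abc_module} to conclude that, along trajectories of the closed-loop MG cluster \eqref{eq:sysaugoverallclosed_module}, the collective Lyapunov function satisfies $\dot\VV(\mbf{\hx}) = \mbf{\hx}^T\mbf{Q}\mbf{\hx}\le 0$, i.e. \eqref{eq:Lyapeqnoverall_module} holds. Since $\mbf{P}>0$ is block diagonal and radially unbounded, $\VV$ is a (weak) Lyapunov function, trajectories are bounded, and the LaSalle invariance theorem \cite{khalil2001nonlinear} applies. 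The remaining task is to characterize the set $R=\{\mbf{\hx}:\mbf{\hx}^T\mbf{Q}\mbf{\hx}=0\}$ and to prove that the largest invariant set $M\subseteq R$ is the origin.

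To describe $R$, I would reuse the decomposition $\mbf{Q}=(a)+(b)+(c)$ from \eqref{eq:Lyap_abc_module}, in which $(a)=\diag(Q_1,\dots,Q_N)$ and $(b)+(c)$ is negative semidefinite by the Laplacian-plus-positive-definite argument of Proposition~\ref{pr:semidefinite_abc_module}. Because both summands are negative semidefinite, $R=X_1\cap X_2$ with $X_1=\{\mbf{\hx}:\mbf{\hx}^T(a)\mbf{\hx}=0\}$ and $X_2=\{\mbf{\hx}:\mbf{\hx}^T[(b)+(c)]\mbf{\hx}=0\}$. For $X_1$, Proposition~\ref{prop:quadr_form_module} states that each block must take the form $\bar w_i=[\alpha_i\ 0\ \gamma_i\ \beta_i\ \delta_i\beta_i]^T$ with $\delta_i=-\frac{k_{2,i}^V-R_{ti}^V}{k_{3,i}^V}$. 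For $X_2$, since $(b)+(c)$ collapses to the matrix $\LL<0$ of \eqref{eq:fake_laplacian_module} acting only on the PCC-voltage components $(V_1,\dots,V_N)$, membership forces $V_i=0$ for every $i$, i.e. the first entry of each block vanishes. Intersecting the two descriptions, $R$ consists of the vectors whose $i$-th block equals $[\,0\ 0\ \gamma_i\ \beta_i\ \delta_i\beta_i\,]^T$.

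Finally, to compute $M$ I would take any $\mbf{\hx}(0)\in R$, set $\mbf{\hd}=0$, and impose $\mbf{\dot{\hx}}(0)\in R$. Because $\hat A_{ij}$ and $\hat A_{load,i}$ act only through $V_j-V_i$ and $V_i$, which are zero on $R$, both the coupling and load terms vanish, so $\dot{\hat x}_{[i]}(0)=F_i\,\hat x_{[i]}(0)$ with $F_i$ as in \eqref{eq:Fi_module}. A direct multiplication yields $\dot{\hat x}_{[i]}(0)=\left[\tfrac{\beta_i}{C_{ti}}\ \tfrac{k_{3,i}^C}{L_{ti}^C}\gamma_i\ 0\ 0\ 0\right]^T$, where the vanishing of the fourth entry is precisely the cancellation produced by the slope $\delta_i$. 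Requiring this vector to lie in $R$ forces its first entry $\tfrac{\beta_i}{C_{ti}}=0$, hence $\beta_i=0$, and its second entry $\tfrac{k_{3,i}^C}{L_{ti}^C}\gamma_i=0$, hence $\gamma_i=0$ since $k_{3,i}^C>0$ by \eqref{eq:condition_module}. Thus $\hat x_{[i]}(0)=0$ for every $i\in\DD$, so $M=\{0\}$ and asymptotic stability follows.

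I expect the main obstacle to be the characterization of $X_1$, specifically ensuring that the VDGU block of the null directions is exactly one-dimensional with slope $\delta_i$; but this is already supplied by Proposition~\ref{prop:quadr_form_module} together with Lemma~\ref{le:ker_F33_module}. Once it is in hand, the only genuinely new step relative to Theorem~\ref{thm:overall_stability} is the routine but essential cancellation check in the invariant-set computation above.
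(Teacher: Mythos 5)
Your proposal is correct and follows essentially the same route as the paper's own proof: Proposition~\ref{pr:semidefinite_abc_module} for negative semidefiniteness of $\mbf{Q}$, the decomposition $(a)+(b)+(c)$ to write $R=X_1\cap X_2$, Proposition~\ref{prop:quadr_form_module} for $X_1$, the Laplacian-based collapse for $X_2$, and the same invariant-set computation forcing $\beta_i=0$ and $\gamma_i=0$ (via $k_{3,i}^C>0$). The only cosmetic difference is that you discard the load term $\hat{A}_{load,i}$ up front because it acts through $V_i=0$ on $R$, whereas the paper carries it inside the matrix before multiplying by the same zero entry; the computations are identical.
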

  \begin{proof}
  	From Proposition \ref{pr:semidefinite_abc_module}, $\dot\VV(\mbf{\hat x})$ is negative semidefinite
  	(i.e. \eqref{eq:Lyapeqnoverall_module} holds). 
  	It should be shown that the origin of the MG is also attractive by using the LaSalle
  	invariance Theorem \cite{khalil2001nonlinear}. For this purpose, the set
  	$R = \{\mbf{x}\in\Rset^{5N} : (\mbf{x})^T \mbf{Q}\mbf{x}= 0 \}$ is first computed by means of the decomposition in \eqref{eq:Lyap_abc_module}, which coincides with
  	\begin{equation}
  	\label{eq:R_module}
  	\begin{aligned}
  	R &= \{\mbf{x} : \mbf{(x)}^T \left( (a)+(b)+(c)\right)\mbf{x}
  	= 0 \}\\
  	&=\{\mbf{x} : \mbf{(x)}^T (a) \mbf{x} +\mbf{(x)}^T(b)\mbf{x}+\mbf{(x)}^T(c) \mbf{x}
  	= 0 \}\\
  	&=\underbrace{\{\mbf{x} : \mbf{(x)}^T (a) \mbf{x} =0\}}_{X_{1}}\cap \underbrace{\{\mbf{x}:\mbf{(x)}^T\left[(b)+(c)\right] \mbf{x} =0\}}_{X_{2}} .
  	\end{aligned}
  	\end{equation}
  	In particular, the last equality follows from the fact that matrix $(a)$ and $(b)+(c)$ are negative semidefinite matrices based on the proof of Proposition \ref{prop:Struct_of_P_Q} and \ref{pr:semidefinite_abc_module}.
  	
  	First, we characterize the set $X_{1}$. By exploiting Proposition \ref{prop:quadr_form_module}, it follows that
  	\begin{small}
  		\begin{equation}
  		\label{eq:X1_module}
  		X_{1} = \{\mbf{x}:\mbf{x} =\left[ \text{ }\alpha_1\text{ } 0
  		\text{ } \gamma_1 \text{ }\beta_1\text{ }\delta_1\beta_1|\text{ }\cdots \text{ }| \text{ }\alpha_N\text{ } 0
  		\text{ } \gamma_N \text{ }\beta_N\text{ }\delta_N\beta_N \text{ }\right]^T, \alpha_i,\gamma_i,\beta_i\in\Rset\},
  		\end{equation}
  	\end{small}  
    Then, the elements of set $X_{2}$ can be characterized with Proposition \ref{pr:semidefinite_abc_module}. Since matrix $(b)+(c)$ can be seen as an "expansion" of a matrix which is negative definite matrix with zero entries on the second to fifth rows and columns of each $5\times 5$ block, by construction, the vectors in the form 
  	\begin{small}
  		\begin{equation}
  		\label{eq:X2_module}
  		X_{2} = \{\mbf{x}:\mbf{x} =\left[ \text{ }0\text{ } \tilde x_{12}
  		\text{ } \tilde x_{13} \text{ }\tilde x_{14} \text{ }\tilde x_{15} \text{ } |\text{ }\cdots \text{ }| \text{ }0 \text{ }\tilde x_{N2}
  		\text{ } \tilde x_{N3} \text{ }\tilde x_{N4} \text{ }\tilde x_{N5} \text{ }\right]^T, \tilde x_{i2},\tilde x_{i3},\tilde x_{i4},\tilde x_{i5}\in\Rset\},
  		\end{equation}
  	\end{small}
  	\normalsize
  	Hence, by merging \eqref{eq:X1_module} and \eqref{eq:X2_module}, it derives that
  	\begin{small}
  		\begin{equation}
  		\label{eq:R_new_module}
  		R = \{\mbf{x}:\mbf{x} =\left[  \text{ }0\text{ }0
  		\text{ } \gamma_1 \text{ }\beta_1\text{ }\delta_1\beta_1|\text{ }|\text{ }\cdots \text{ }| \text{ }0\text{ }0
  		\text{ } \gamma_N \text{ }\beta_N\text{ }\delta_N\beta_N|\text{ }\right]^T,\gamma_i, \delta_i,\beta_i\in\Rset\}.
  		\end{equation}
  	\end{small}
  	To conclude the proof, it should be shown that the largest
  	invariant set $M\subseteq R$ is the origin. To this purpose, we consider \eqref{eq:modelmodule-aug-closed}, include coupling terms $\subss{\hat\xi}i$, resistance load term $\hat{A}_{load,i}\hat{x}_{i}(0)$, set $\hat d_{[i]}= 0$ and choose as initial
  	state $\mbf{\hat x}(0) = \left[ \hat x_{1}(0)|\dots|\hat
  	x_{N}(0)\right]^T\in R$. We aim to find conditions on the
  	elements of  $\mbf{\hat x}(0)$ that must hold for having
  	$\mbf{\dot{\hat{x}}}\in R$. One has
  	\begin{equation*}
  	\begin{aligned}
  	\dot{\hat x}_{i}(0) &={F_{i}}\hat
  	x_{i}(0)+\hat{A}_{load,i}\hat{x}_{i}(0)+\sum\limits_{j\in\NN_i}\underbrace{\hat A_{ij}\left(\hat
  		x_{j}(0)-\hat x_{i}(0)\right)}_{=0}\\
  	&= \left[
  	\renewcommand\arraystretch{1.8}
  	\begin{array}{c|cc|cc}
  	-\frac{1}{R_{Li}C_{ti}} & \frac{1}{C_ti} & 0 & \frac{1}{C_ti} & 0\\
  	\hline
  	\frac{(k_{1,i}^C-1)}{L_{ti}^C} & \frac{(k_{2,i}^C-R_{ti}^C)}{L_{ti}^C}& \frac{k_{3,i}^C}{L_{ti}^C} & 0 & 0\\
  	0 & -1 & 0 & 0 & 0\\
  	\hline
  	\frac{(k_{1,i}^V-1)}{L_{ti}^V} & 0 & 0 &\frac{(k_{2,i}^V-R_{ti}^V)}{L_{ti}^V}& \frac{k_{3,i}^V}{L_{ti}^V}  \\
  	-1 & 0 & 0 & 0 & 0
  	\end{array}\right]\left[\begin{array}{c}
  	0 \vspace{3mm}\\
  	0 \vspace{3mm}\\
  	\gamma_i\vspace{3mm}\\
  	\beta_{i}\vspace{3mm}\\
  	\delta_i\beta_i\vspace{3mm}
  	\end{array}\right]
  	\\
  	&=\left[\begin{array}{c}
  	\frac{\beta_i}{C_{ti}}  \vspace{3mm}\\
  	\frac{k_{3,i}^C}{L_{ti}^C}\gamma_i \vspace{3mm}\\
  	0\vspace{3mm}\\
  	\underbrace{\frac{k_{2,i}^V-R_{ti}^V}{L_{ti}^V}\beta_i+\frac{k_{3,i}^V}{L_{ti}^V}\delta_i\beta_i}_{=0}\vspace{3mm}\\
  	0\vspace{3mm}
  	\end{array}\right]=\left[\begin{array}{c}
  	\frac{\beta_i}{C_{ti}}  \vspace{3mm}\\
  	\frac{k_{3,i}^C}{L_{ti}^C}\gamma_i \vspace{3mm}\\
  	0\vspace{3mm}\\
  	0\vspace{3mm}\\
  	0\vspace{3mm}
  	\end{array}\right]
  	\normalsize
  	\end{aligned}
  	\end{equation*}
  	for all $i\in\DD$. It follows that $\mbf{\dot{\hat{x}}}(0)\in R$ only if
  	$\beta_i = 0$ and $\gamma_i = 0$. Since $M\subseteq R$, from \eqref{eq:R_new_module} one has $M = \{0\}$.
  \end{proof}
\begin{rmk}
	\label{rmk:_Module}
		The design of stabilizing controller for each MG can be conducted according to Proposition \ref{prop:Struct_of_P_Q}. In particular, differently from the approach in \cite{7934339}, no optimization problem has to be solved for computing a local controller. Indeed, it is enough to choose control coefficient $k_{1,i}^C$, $k_{2,i}^C$, $k_{3,i}^C$ and $k_{1,i}^V$, $k_{2,i}^V$, $k_{3,i}^V$ from inequality set \eqref{eq:condition_module}. Note that these inequalities are always feasible, implying that a stabilizing controller always exists. Moreover, the inequalities depend only on the parameters $R_{ti}^C$ and $R_{ti}^V$ of the MG $i$. Therefore, the control synthesis is independent of parameters of MGs and power lines which means that controller design can be executed only once for each converter in a plug-and play fashion. From Theorem \ref{thm:overall_stability_module}, local controllers also guarantee stability of the whole MG cluster. When new MGs are plugged in the MG cluster, their controller are designed as described above, the connectivity of the electrical graph $\GG_{el}$ is preserved and have Theorem \ref{thm:overall_stability_module} applied to the whole MG cluster. Instead, when a MG is plugged out, the electrical graph $\GG_{el}$ might be disconnected and split into two connected graphs. Theorem \ref{thm:overall_stability_module} can still be applied to show the stability of each sub-cluster.
\end{rmk}

 \section{Leader-based Distributed Secondary Controller}
  \label{leader-based controller}
  The proposed primary PnP controller can achieve both the voltage and current tracking control in which the reference is given by the local controller. However, to achieve the coordination among MGs, references should be provided by the upper control layer to achieve voltage tracking and current sharing reasonably. Furthermore, to avoid using the centralized controller to send the reference value for each PnP controller, the leader-based distributed consensus algorithm is proposed in the secondary control level including leader-based voltage and current controllers by which not each controller need to know the leader reference.  
 
 In this section, the proposed primary PnP controller is approximated as unitary gains from the perspective of secondary control level. Then the leader-based voltage and current controller is proposed in the secondary control level. Finally, combining with the proposed leader-based voltage and current controller, the asymptotic stability of the proposed controller is proven by Lyapunov stability theory.
   
 \begin{figure}
 	\centering
 	\hspace{-15mm}
 	\includegraphics[scale=0.58]{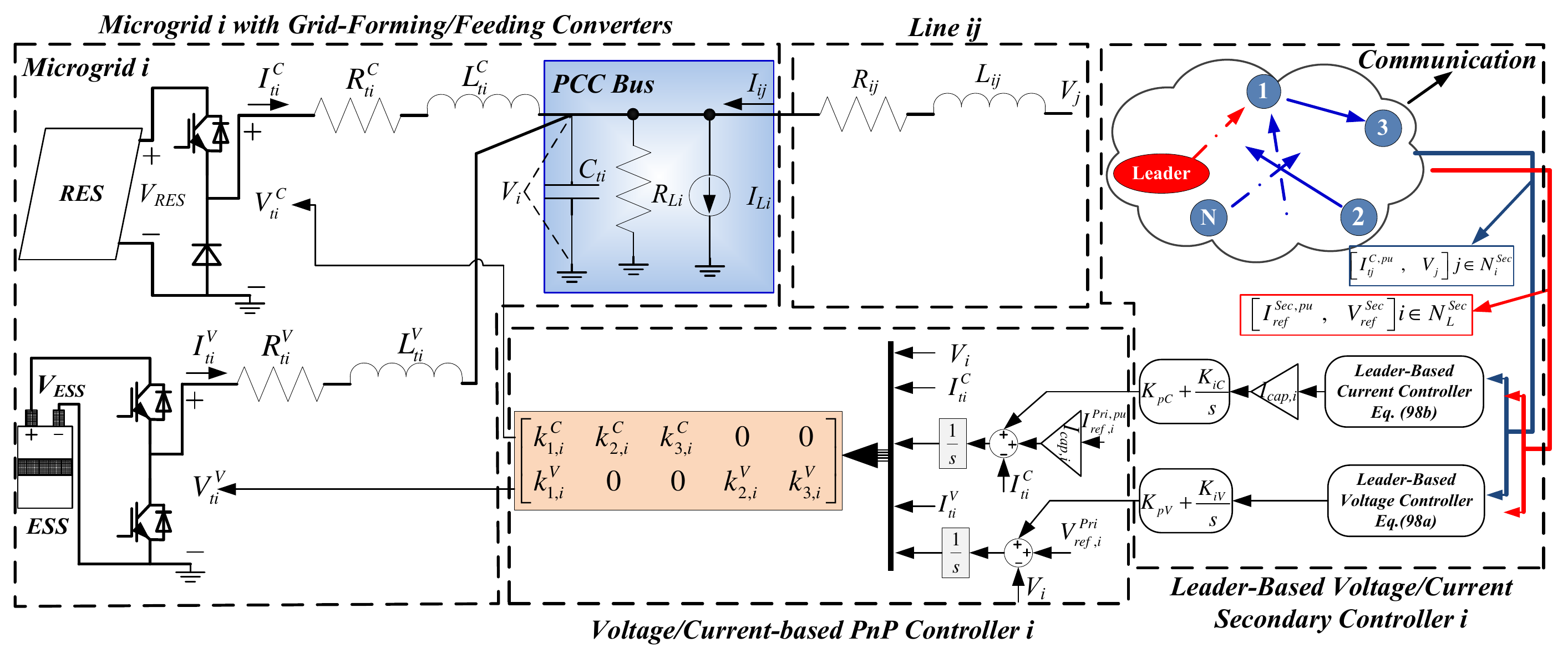}%
 	\caption{Electrical Scheme of MG $i$ with Leader-based Voltage/Current Distributed Secondary Controller.}
 	\label{fig:ctrl_Sec_Module}
 \end{figure}

\subsection{Leader-based Voltage/Current Distributed Secondary Controller}
 The leader-based voltage and current distributed secondary controller is proposed in this subsection to achieve transfer reference information in a distributed way.

Based on \eqref{eq:modelmodule-aug-closed} and \eqref{eq:Fi_module}, the transfer function from voltage reference $z_{ref_{[i]}}^{Pri,V}$ and current reference $z_{ref_{[i]}}^{Pri,C}$ to output voltage $V_{i}$ and output current $I_{ti}^C$ can be written as $\hat{H}_{i}(sI-\hat{F}_{i})\tilde{M}_{i}$ where $\tilde{M}_{i}$ collects the second and third columns of $\hat{M}_{i}$. If setting $s=0 Hz$, the unit matrix is obtained which means the primary PnP controller can be approximated as unit-gain
\begin{subequations}
	\label{eq:unit_appro}
	\begin{empheq}[left=\empheqlbrace]{align}
	\label{eq:V_V_ref}
	V_{i} & =V_{ref,i}^{Pri} \\
	\label{eq:I_I_ref}
	\frac{I_{ti}^C}{I_{cap,i}} & =I_{ti}^{C,pu} = I_{ref,i}^{Pri,pu}
	\end{empheq}	
\end{subequations} 
 The secondary control layer exploit a communication network interconnecting MGs and fulfilling the following Assumption 
\begin{assum}
	\label{assum_communication}
	The communication graph $\GG^{Sec}=(\DD,\EE^{Sec})$ is connected and undirected implying that communication links within MG clusters are bidirectional. Over each communication link $(i,j)\in\EE^{Sec}$, the pairs of variables $(I_{ti}^{C,pu},V_{i})$ and $(I_{tj}^{C,pu},V_{j})$ are transmitted. Furthermore, the graph $\GG^{Sec}$ is endowed with an additional node termed the leader node, carrying the reference values $(I_{ref}^{Sec,pu},V_{ref}^{Sec})$ and connected to at least one node belongs to $\DD$. 
\end{assum}

The proposed leader-based voltage and current distributed secondary controller can be written as
	\begin{subequations}
	\label{eq:Leader_based_controller}
	\begin{empheq}[left=\empheqlbrace]{align}
	\label{eq:V_Leader}
	e_{Vi}&=\sum\limits_{j\in\NN_{i}^{Sec}}a_{ij}\left({V_{i}}-{V_{j}}\right)+g_{i}\left({V_{i}}-{V_{ref}^{Sec}}\right) \\
	\label{eq:C_Leader}
	e_{Ci}&=\sum\limits_{j\in\NN_{i}^{Sec}}a_{ij}\left({I_{i}}^{C,pu}-{I_{j}}^{C,pu}\right)+g_{i}\left({I_{i}}^{C,pu}-{I_{ref}^{Sec,pu}}\right)
	\end{empheq}	
\end{subequations} 
where $\NN_{i}^{Sec}$ is the set of communication neighbors of MG $i$, $a_{ij}=1$ if MGs $i$ and $j$ can communicate with each other through a communication link, $g_{i}=1$ if MG $i$ can receive the reference values about voltage and per-unit current which means $i\in\NN_{L}^{Sec}$ and $\NN_{L}^{Sec}$ is the set for MG who can receive the reference values. 

To be specific, the current reference value $I_{ref}^{Sec,pu}$ is a per-unit value considering the total load requirement and the total system capacity. If the per-unit values of all the output currents from MGs are equals to the reference value, it means that MGs within the cluster share the output current properly according to their own capacities.

In matrix form, \eqref{eq:Leader_based_controller} is given by the equations:
	\begin{subequations}
	\label{eq:Leader_based_controller_M}
	\begin{empheq}[left=\empheqlbrace]{align}
	\label{eq:V_Leader_M}
	e_{V}&=(L+G)(V-V_{ref}^{Sec} \pmb {1_{N}}) \\
	\label{eq:C_Leader_M}
	e_{C}&=(L+G)(I_{t}^{C,pu}-I_{ref}^{Sec,pu}\pmb{ 1_{N}})
	\end{empheq}	
\end{subequations}
where $e_{V}=[\text{ }e_{V1}\text{ } e_{V2} \text{ } \dots \text{ }e_{VN}]^T$, $V=[\text{ }V_{1}\text{ } V_{2} \text{ } \dots \text{ }V_{N}]^T$,$I_{t}^{C,pu}=[\text{ }I_{t1}^{C,pu}\text{ } I_{t2}^{C,pu} \text{ } \dots \text{ }I_{tn}^{C,pu}]^T$, and $G$ is a diagonal matrix with diagonal entries equal to the gains $g_{i}$. Based on Assumption \ref{assum_communication}, $L$ is symmetric Laplacian matrix.

Then, the error $e_{Vi}$ and $e_{Ci}$ are filtered by PI controllers respectively. In order to provide the output $\Delta V_i$ and $\Delta I_{ti}^{C,pu}$ of the secondary controller layer, it can be written as
\begin{subequations}
	\label{eq:Leader_PI_controller}
	\begin{empheq}[left=\empheqlbrace]{align}
	\label{eq:V_PI}
	\Delta V &=-K_{pV}e_{V}-\int{K_{iV}e_{V}} \\
	\label{eq:C_PI}
	\Delta I_{t}^{C,pu} &=-K_{pC}e_{C}-\int{K_{iC}e_{C}}
	\end{empheq}	
\end{subequations}
where $\Delta V=[\text{ }\Delta V_{1}\text{ } \Delta V_{2} \text{ } \dots \text{ }\Delta V_{N}]^T$, $\Delta I_{t}^{C,pu}=[\text{ }\Delta I_{t1}^{C,pu}\text{ } \Delta I_{t2}^{C,pu} \text{ } \dots \text{ }\Delta I_{tn}^{C,pu}]^T$, in addition, $K_{pV}$ and $K_{iV}$ are proportional and integral coefficients of the leader-based voltage controller and $K_{pC}$ and $K_{iC}$ are proportional and integral coefficients of the leader-based current controller. All the coefficients are common to all MGs, thus these are scalar variables.
\begin{rmk}
	Here, for the consensus-based algorithm, in the literature\cite{DBLP:journals/corr/TucciMGF16}, consider only the integral controllers interfacing with the consensus algorithm and the primary control level. In this paper, PI controller is used in order to improve the convergence speed of the secondary controller.  
\end{rmk}
The relationship between the primary PnP controller and the leader-based secondary controller are shown in Fig. \ref{fig:ctrl_Sec_Module}. Exploiting the unit gain approximation of primary loops, one obtains that  \eqref{eq:unit_appro} is replaced by
 \begin{subequations}
 	\label{eq:P_S_Control}
 	\begin{empheq}[left=\empheqlbrace]{align}
 	\label{eq:V_P_S_Control}
 	V & =V_{ref}^{Pri}+\Delta V  \\
 	\label{eq:C_P_S_Control}
 	I_{t}^{C,pu} & = I_{ref}^{Pri,pu}+\Delta I_{t}^{C,pu}
 	\end{empheq}	
 \end{subequations}
where $V_{ref}^{Pri}=[\text{ }V_{ref,1}^{Pri}\text{ } V_{ref,2}^{Pri} \text{ } \dots \text{ }V_{ref,n}^{Pri}]^T$, $I_{ref}^{Pri,pu}=[\text{ }I_{ref,1}^{Pri,pu}\text{ } I_{ref,2}^{Pri,pu} \text{ } \dots \text{ }I_{ref,n}^{Pri,pu}]^T$.

Focusing the time derivative of \eqref{eq:P_S_Control}, we get
 \begin{subequations}
	\label{eq:P_S_Control_D}
	\begin{empheq}[left=\empheqlbrace]{align}
	\label{eq:V_P_S_Control_D}
	\dot{V} & =-K_{iV}[I+K_{pV}(L+G)]^{-1}e_{V}  \\
	\label{eq:C_P_S_Control_D}
	\dot{I}_{t}^{C,pu} & = -K_{iC}[I+K_{pC}(L+G)]^{-1}e_{C}
	\end{empheq}	
\end{subequations}
\subsection{Stability Analysis}
The aim is to show that under the effect of secondary control layer, all PCC voltage converge to the leader value $V_{ref}^{Sec}$ and all the output current converge to the same per-unit value $I_{ref}^{Sec,pu}$. 
\begin{lem}
	\label{lem_po_de}
	Under Assumption \ref{assum_communication}, 
	$L$ is symmetric Laplacian matrix, $G=diag{[g_{1},g_{2},\dots,g_{n}]}$ is diagonal matrix in which $g_{i}\geq 0$ and at least one $g_{i}>0$, 
	then matrix $L+G$ is positive definite.
\end{lem}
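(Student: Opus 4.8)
The plan is to exploit that $L+G$ is a sum of two positive semidefinite matrices and then to pin down the equality case of the associated quadratic form. First I would recall the standard spectral property of the Laplacian of a connected undirected graph: $L=L^T\geq 0$, and its kernel is one-dimensional, spanned by the all-ones vector $\mbf{1_n}$. Equivalently, for every $x\in\Rset^n$ the edge-difference identity gives $x^T L x = \sum_{(i,j)\in\EE^{Sec}} a_{ij}(x_i-x_j)^2 \geq 0$, with equality if and only if $x_i=x_j$ across every edge, i.e. (by connectivity of $\GG^{Sec}$, guaranteed by Assumption \ref{assum_communication}) if and only if $x=\alpha\mbf{1_n}$ for some $\alpha\in\Rset$.

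Next, since $G=\diag(g_1,\dots,g_n)$ has nonnegative diagonal entries, the quadratic form $x^T G x=\sum_{i=1}^n g_i x_i^2$ is also nonnegative for every $x$. Hence $L+G$ is symmetric and positive semidefinite, and the crucial point is that $x^T(L+G)x=x^T L x + x^T G x$ decomposes as a sum of two nonnegative contributions.

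The key step is the analysis of the null case. Suppose $x^T(L+G)x=0$ for some $x\in\Rset^n$. Because both summands are nonnegative and add to zero, each must vanish separately, so $x^T L x=0$ and $x^T G x=0$. From $x^T L x=0$ together with connectivity I conclude $x=\alpha\mbf{1_n}$ for some scalar $\alpha$. Substituting into the second condition yields $x^T G x=\sum_{i=1}^n g_i x_i^2=\alpha^2\sum_{i=1}^n g_i=0$; invoking the hypothesis that at least one $g_k>0$ gives $\alpha^2 g_k=0$, hence $\alpha=0$ and therefore $x=0$. Consequently $x^T(L+G)x=0$ forces $x=0$, which together with $L+G\geq 0$ establishes $L+G>0$.

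I do not expect a substantial obstacle here, since the argument rests on two standard facts about graph Laplacians already cited in the paper. The only points deserving care are the justification that the two nonnegative terms must each vanish (a direct consequence of nonnegativity of a sum equal to zero) and the reduction of $\ker L$ to $\mathrm{span}\{\mbf{1_n}\}$ via connectivity; the role of the condition ``at least one $g_i>0$'' is precisely to break the single remaining degeneracy of the Laplacian along $\mbf{1_n}$.
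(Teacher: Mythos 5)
Your proof is correct and takes essentially the same approach as the paper's: both arguments rest on the two facts that connectivity of $\GG^{Sec}$ forces $x^TLx=0$ only for $x\in\mathrm{span}\{\mbf{1_n}\}$, and that the hypothesis of at least one $g_i>0$ makes $x^TGx$ strictly positive along that direction. The paper merely packages this via the decomposition $x=\hat{x}+\bar{x}$ with $\hat{x}\in H^1$, $\bar{x}\in H^1_{\perp}$ and a two-case argument, while you analyze the equality case of the sum of two positive semidefinite quadratic forms; the mathematical content is identical.
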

\begin{proof}
	\label{proof_po_de}
	As mentioned in $\mbf{Notation}$ at the beginning of this technical report, each vector $x\in\Rset^n$ can always be written in a unique way as
	\begin{equation}
		\label{eq_x_H1_Ho}
		x=\hat{x}+\bar{x} \text{ } \text{with } \hat{x}\in H^1 \text{ and } \bar{x}\in H_{\perp}^1
	\end{equation} 
	Then, one has
	\begin{equation}
	   \label{eq_x}
		x^T(L+G)x=\hat{x}^TL\hat{x}+x^TGx
	\end{equation}
	\eqref{eq_x} is equivalent to the two following cases 
    \begin{subequations}
	\label{eq:hat_ueq0_eq0}
	\begin{empheq}[left=\empheqlbrace]{align}
	\label{eq:hat_ueq0}
	\text{If } \hat{x}\neq0\text{, } \underbrace{\hat{x}^TL\hat{x}}_{>0}+\underbrace{{x}^TG{x}}_{\geq0}&>0\\
	\label{eq:hat_eq0}
	\text{If } \hat{x}=0\text{, } \underbrace{\bar{x}^TL\bar{x}}_{=0}+\underbrace{\bar{x}^TG\bar{x}}_{>0}&>0
	\end{empheq}	
	\end{subequations}
Thus, matrix $L+G$ is positive definite matrix.
\end{proof}
\begin{cor}
	\label{cor}
	Under Lemma \ref{lem_po_de}, matrix $(L+G)^{-1}$ is positive definite and matrix $[I+\alpha(L+G)]^{-1}$ where scalar $\alpha>0$ is also positive definite.
\end{cor}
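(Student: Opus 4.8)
The plan is to build directly on Lemma~\ref{lem_po_de}, which already certifies that $L+G$ is positive definite, and then to invoke two elementary facts from matrix theory: the inverse of a symmetric positive-definite matrix is again symmetric positive definite, and the sum of a positive-definite matrix with a positive-semidefinite matrix is positive definite. Both claims of the corollary then follow almost immediately.

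First I would record symmetry. By Assumption~\ref{assum_communication} the matrix $L$ is a symmetric Laplacian, and $G=\diag(g_1,\dots,g_n)$ is diagonal, so $L+G=(L+G)^T$; together with Lemma~\ref{lem_po_de} this makes $L+G$ symmetric positive definite, hence nonsingular, so $(L+G)^{-1}$ exists and is symmetric. To prove $(L+G)^{-1}>0$ directly, I would fix any $x\neq 0$ and set $y=(L+G)^{-1}x$, which is nonzero by invertibility, and compute
\[
x^T(L+G)^{-1}x = \big((L+G)y\big)^T (L+G)^{-1}\big((L+G)y\big) = y^T (L+G)\, y > 0,
\]
where the middle equality uses the symmetry of $L+G$ and the final inequality uses $L+G>0$. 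Since this holds for every $x\neq 0$, we conclude $(L+G)^{-1}>0$.

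For the second claim, note that for a scalar $\alpha>0$ the matrix $\alpha(L+G)$ is positive definite (positive scaling preserves positive definiteness), and $I$ is positive definite, so $I+\alpha(L+G)$ is a sum of positive-definite matrices and is therefore positive definite; it is also clearly symmetric. Applying the identical quadratic-form argument as above, with $L+G$ replaced by $I+\alpha(L+G)$, yields $[I+\alpha(L+G)]^{-1}>0$.

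There is no real obstacle here; the statement is routine once Lemma~\ref{lem_po_de} is available. The only points that merit mild care are (i) explicitly confirming symmetry so that the substitution $y=(L+G)^{-1}x$ legitimately turns $x^T(L+G)^{-1}x$ into $y^T(L+G)y$, and (ii) observing that the positive definiteness of $I+\alpha(L+G)$ actually holds independently of the lemma, since $I>0$ alone guarantees it even when $\alpha(L+G)$ is merely positive semidefinite.
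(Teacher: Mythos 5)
Your proof is correct, and it supplies exactly the routine verification that the paper itself omits: the paper states this corollary without any proof, treating it as an immediate consequence of Lemma \ref{lem_po_de} via the standard facts that the inverse of a symmetric positive-definite matrix is positive definite and that $I+\alpha(L+G)$, being a sum of positive-definite matrices, is positive definite. Your quadratic-form substitution $y=(L+G)^{-1}x$ and your remark that $I>0$ alone already forces $I+\alpha(L+G)>0$ are precisely those standard facts spelled out, so the proposal matches the paper's (implicit) reasoning and is complete and sound.
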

We recall that if $\alpha$ is a scalar, $A$ is positive definite matrix and $I$ is unit matrix which is also positive definite matrix, from Woodbury matrix identity theory \cite{doi:10.1137/1.9780898718027} , one has
	\begin{equation}
		\label{inverse_I_L}
		[I+\alpha A]^{-1} = \alpha^{-1}A^{-1}-\alpha^{-1}A^{-1}(\alpha^{-1}A^{-1}+I)^{-1}\alpha^{-1}A^{-1}
	\end{equation}	
\begin{lem}
	\label{AB}
	\cite{horn2012matrix} Let $A$, $B$ $\in\Rset^{n\times n}$ be positive definite matrices. If $AB=BA$ is satisfied, then $AB$ is positive definite.
\end{lem}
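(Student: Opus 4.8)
The plan is to establish the conclusion in two independent stages: first that $AB$ is symmetric, and then that all its eigenvalues are strictly positive; together these give $AB>0$ in the sense used throughout this report.

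First I would verify symmetry, which is the only place the hypothesis $AB=BA$ is truly needed. Since $A$ and $B$ are real symmetric positive definite matrices, $A^T=A$ and $B^T=B$, so $(AB)^T=B^TA^T=BA=AB$, i.e. $AB$ is symmetric. Next I would invoke the spectral theorem for commuting symmetric matrices: because $A$ and $B$ are symmetric and commute, there exists an orthogonal matrix $U\in\Rset^{n\times n}$ that simultaneously diagonalizes them, so that $U^TAU=\Lambda_A$ and $U^TBU=\Lambda_B$ with $\Lambda_A,\Lambda_B$ diagonal. Since $A>0$ and $B>0$, the diagonal entries of $\Lambda_A$ and $\Lambda_B$ are strictly positive. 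Then $U^T(AB)U=\Lambda_A\Lambda_B$ is diagonal with strictly positive entries (each a product of two positive numbers), and consequently $AB>0$.

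As an alternative route that sidesteps simultaneous diagonalization, I would exploit the unique symmetric positive definite square root $A^{1/2}$ guaranteed by $A>0$. Observe that $A^{-1/2}(AB)A^{1/2}=A^{1/2}BA^{1/2}$, and that the right-hand side is symmetric and satisfies $x^TA^{1/2}BA^{1/2}x=(A^{1/2}x)^TB(A^{1/2}x)>0$ for every $x\neq 0$, hence is positive definite. Since $AB$ is similar to $A^{1/2}BA^{1/2}$ via $A^{1/2}$, the two matrices share the same spectrum, so all eigenvalues of $AB$ are strictly positive; combined with the symmetry of $AB$ established above, this again yields $AB>0$. (Note this second argument isolates commutativity entirely in the symmetry step, which is instructive.)

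The main obstacle, such as it is, is the justification of the simultaneous diagonalizability of commuting symmetric matrices in the first route, which rests on the fact that every eigenspace of $A$ is invariant under $B$ (so $B$ restricts to a symmetric operator on each eigenspace and can be diagonalized there); everything else is routine. Since this is a standard result, I would simply cite it, as the statement does with \cite{horn2012matrix}, and present whichever of the two arguments above is shortest. I expect no genuine difficulty beyond keeping the two requirements---symmetry and positivity of the spectrum---clearly separated.
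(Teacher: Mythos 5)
Your proof is correct; note that the paper itself offers no argument for this lemma, delegating it entirely to the citation of \cite{horn2012matrix}, so there is no in-paper proof to diverge from. Both of your routes are the standard ones (simultaneous orthogonal diagonalization of commuting symmetric matrices, and the similarity $A^{-1/2}(AB)A^{1/2}=A^{1/2}BA^{1/2}$ combined with the symmetry of $AB$), and your separation of the two requirements---symmetry, which is exactly where $AB=BA$ is used, and positivity of the spectrum, which holds even without commutativity---is clean and consistent with the paper's convention that positive definiteness presupposes symmetry.
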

\begin{lem}
	\label{lem_AB_positive_proof}
	Under Lemma \ref{AB} and Corollary \ref{cor} and eq. \eqref{inverse_I_L}, it is known that scalar $K_{pV}>0$ and $(L+G)$ is positive definite matrix, matrix $(L+G)[I+K_{pV}(L+G)]^{-1}$ is positive definite.
\end{lem}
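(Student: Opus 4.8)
The plan is to set $A=L+G$ and $\alpha=K_{pV}>0$, rewrite the matrix of interest as the product $A\,(I+\alpha A)^{-1}$, and apply Lemma \ref{AB} to these two factors. To invoke that lemma I must verify three things: that $A$ is symmetric positive definite, that $(I+\alpha A)^{-1}$ is symmetric positive definite, and that the two factors commute.

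The first two points are already at hand. By Lemma \ref{lem_po_de}, $A=L+G$ is symmetric positive definite, since $L$ is a symmetric Laplacian and $G$ is diagonal with nonnegative entries. By Corollary \ref{cor}, the matrix $(I+\alpha A)^{-1}=[I+K_{pV}(L+G)]^{-1}$ is positive definite; it is moreover symmetric, because $I+\alpha A$ is symmetric and the inverse of a symmetric invertible matrix is symmetric.

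The key step is commutativity. Since $I+\alpha A$ is a polynomial in $A$, it commutes with $A$, i.e. $A(I+\alpha A)=(I+\alpha A)A$; multiplying this identity on the left and on the right by $(I+\alpha A)^{-1}$ yields $(I+\alpha A)^{-1}A=A(I+\alpha A)^{-1}$. Hence the two symmetric positive definite matrices $A$ and $(I+\alpha A)^{-1}$ commute, and Lemma \ref{AB} gives that their product $A(I+\alpha A)^{-1}=(L+G)[I+K_{pV}(L+G)]^{-1}$ is positive definite, which is exactly the claim.

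I do not expect a genuine obstacle: the only subtlety is checking the hypotheses of Lemma \ref{AB}, and in particular the commutativity, which follows at once from the fact that both factors are functions of the single matrix $A$. As a self-contained alternative one could diagonalize $A=U\Lambda U^T$ with $U$ orthogonal and $\Lambda>0$; then $A(I+\alpha A)^{-1}=U\,\Lambda(I+\alpha\Lambda)^{-1}U^T$ is symmetric with eigenvalues $\lambda_k/(1+\alpha\lambda_k)>0$, hence positive definite, recovering the same conclusion without explicitly invoking Lemma \ref{AB}.
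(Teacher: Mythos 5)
Your proof is correct, and it reaches the conclusion through the same overall skeleton as the paper (both factors positive definite, the two factors commute, then Lemma \ref{AB}), but the decisive step --- commutativity --- is established by a genuinely different and simpler argument. The paper proves $(L+G)[I+K_{pV}(L+G)]^{-1}=[I+K_{pV}(L+G)]^{-1}(L+G)$ by expanding \emph{both} products with the Woodbury identity \eqref{inverse_I_L} and observing that the two expansions coincide; this is why the identity appears among the lemma's stated hypotheses. You instead note that $I+K_{pV}(L+G)$ is a polynomial in $A=L+G$, so $A(I+\alpha A)=(I+\alpha A)A$, and conjugating by $(I+\alpha A)^{-1}$ immediately gives the commutation with the inverse. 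This bypasses the Woodbury computation entirely, is shorter, and makes transparent \emph{why} the matrices commute (both are functions of the single matrix $A$), whereas the paper's route verifies the same fact by explicit but less illuminating algebra. Your closing spectral argument --- diagonalizing $A=U\Lambda U^T$ and reading off the eigenvalues $\lambda_k/(1+\alpha\lambda_k)>0$ of $U\Lambda(I+\alpha\Lambda)^{-1}U^T$ --- goes further still: it makes the lemma self-contained, dispensing with Lemma \ref{AB} and Corollary \ref{cor} altogether, at the modest cost of invoking the spectral theorem. Either of your variants would serve as a drop-in replacement for the paper's proof; the only hypothesis of Lemma \ref{AB} that requires any care is symmetry of the factors, which you check explicitly.
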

\begin{proof}
From Corollary \ref{cor}, matrix $[I+K_{pV}(L+G)]^{-1}$ is positive. Then, from \eqref{inverse_I_L}, one has
	\begin{equation}
	\label{L_G_inverse}
	\begin{aligned}	
		(L+G)[I+K_{pV}(L+G)]^{-1}  &=K_{pV}^{-1}I-\left(K_{pV}^{-1}\right)^2\left[K_{pV}^{-1}(L+G)^{-1}+I\right]^{-1}\left(L+G\right)^{-1}\\
		                       &= K_{pV}^{-1}I-\left(K_{pV}^{-1}\right)^2\left[K_{pV}^{-1}+(L+G)\right]^{-1}\\
		\end{aligned}
	\end{equation}
	Then
	\begin{equation}
	\label{G_L_inverse}
	\begin{aligned}	
	[I+K_{pV}(L+G)]^{-1}(L+G)  &=K_{pV}^{-1}I-\left(K_{pV}^{-1}\right)^2\left(L+G\right)^{-1}\left[K_{pV}^{-1}(L+G)^{-1}+I\right]^{-1}\\
	&= K_{pV}^{-1}I-\left(K_{pV}^{-1}\right)^2\left[K_{pV}^{-1}+(L+G)\right]^{-1}\\
	\end{aligned}
	\end{equation}
	Comparing \eqref{L_G_inverse} with \eqref{G_L_inverse}, we have
	\begin{equation}
	\label{AB=BA}
		(L+G)[I+K_{pV}(L+G)]^{-1}=[I+K_{pV}(L+G)]^{-1}(L+G)
	\end{equation}
	To conclude, from Lemma \ref{AB}, since both matrices $(L+G)$ and $[I+K_{pV}(L+G)]^{-1}$ are positive definite, combined with \eqref{AB=BA}, the matrix $(L+G)[I+K_{pV}(L+G)]^{-1}$ is positive definite.
\end{proof}
Note that the consensus schemes \eqref{eq:V_Leader}-\eqref{eq:V_PI} and \eqref{eq:C_Leader}-\eqref{eq:C_PI} have the same structure. Then, in the following, we show convergence to the leader reference value only for voltages.

We consider the following candidate as Lyapunov function
\begin{equation}
	\label{Lyap_V}
	Z=\frac{1}{2}e_{V}^TP^{Sec}e_{V}, \text{   where } P^{Sec}>0 
\end{equation}

The time derivative of \eqref{Lyap_V} is 
 \begin{equation}
 \label{Deri_Lyap_V}
 \begin{aligned}
 \dot{Z} & =e_{V}^TP^{Sec}(L+G)\dot{V}\\
         & =-K_{iV}e_{V}^TP^{Sec}(L+G)[I+K_{pV}(L+G)]^{-1}e_{V}\\
         & =\frac{-K_{iV}}{2}e_{V}^T[P^{Sec}O+O^TP^{Sec}]e_{V}
 \end{aligned}
 \end{equation}
 where $O=(L+G)[I+K_{pV}(L+G)]^{-1}$. 
 
 Based on Lemma \ref{lem_AB_positive_proof}, matrix $O$ is positive definite. Based on Lyapunov theory \cite{qu2009cooperative}, there exists positive definite matrix $P^{Sec}$ which can make $P^{Sec}O+O^TP^{Sec}$ is positive definite. Therefore 
  \begin{equation}
 \label{Deri_Lyap_V_ieq}
 \dot{Z} =\frac{-K_{iV}}{2}e_{V}^T[P^{Sec}O+O^TP^{Sec}]e_{V} < \frac{-K_{iV}}{2} \sigma_{min}(P^{Sec}O+O^TP^{Sec})||e_{V}||^2 < 0
 \end{equation}
 where $\sigma_{min}(P^{Sec}O+O^TP^{Sec})$ denotes the minimal eigenvalues of the symmetric matrix $P^{Sec}O+O^TP^{Sec}$.
From \eqref{Deri_Lyap_V_ieq}, one has that the tracking error $e_{V}$ goes to zero, and that all PCC voltages converge to the reference value provided by the leader. The convergence of output currents to the reference value is the same as above. 
 \section{Hardware-in-Loop Tests}
  \label{sec:simulation_results}
  In order to verify the effectiveness of proposed primary PnP controller combined with leader-based voltage/current distributed controllers for MG clusters, real-time HiL tests are carried out based on dSPACE 1006. The real-time simulation model comprises four MGs with meshed electrical topology shown in Fig. \ref{fig:setup}. The capacity ratio for four MGs rated capacity is $1: 2: 3: 4$ from MGs $1-4$. Communication network has the same topology of the electrical network. And MG $1$ is the only one receiving the leader information. The nominal voltage for the dc MG is 48V. In addition, in Appendix \ref{sec:AppElectrPar}, the electrical setup information is shown in TABLE \ref{tbl:electrical_setup}, the transmission lines parameters are shown in TABLE \ref{tbl:line_parameters} and the control coefficients are shown in TABLE \ref{tbl:control coefficients}. 
  
 \begin{figure}[!htb]
 	\centering
 	\includegraphics[scale=0.48]{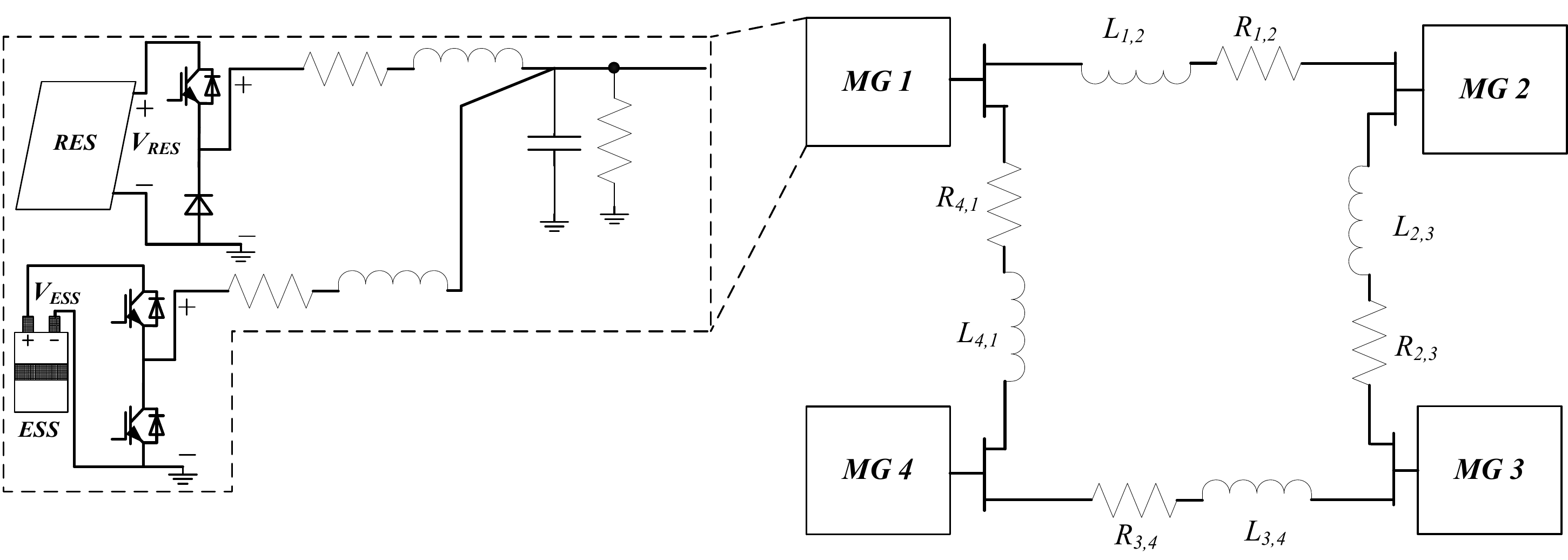}
 	\caption{System Configuration of Hardware-in-Loop Test.}
 	\label{fig:setup}
 \end{figure}
\subsection{Case 1: PnP Test considering Primary Control Level}

In this subsection, the effectiveness of the proposed primary PnP controller is verified. Each MG is started separately. At the beginning, we set different voltage and current references for different MGs. At $t = T1$, MGs $1-3$ are connected together without changing the control coefficients. As shown in Fig. \ref{fig:PnP_Test_V}, after the connection of MGs $1-3$, only small disturbances exist in the voltage waveform. Moreover, there is no major disturbance affecting the output currents as shown in Fig. \ref{fig:PnP_Test_C}. Then at $t = T2$,  MG $4$ is connected to the system. Similarly, as shown in Fig. \ref{fig:PnP_Test}, after small disturbance, both the output voltage and current track the respective reference values. 
    \begin{figure}[!htb]
                      \centering
                      \begin{subfigure}[htb]{0.49\textwidth}
                        \centering
                           \includegraphics[width=1\textwidth]{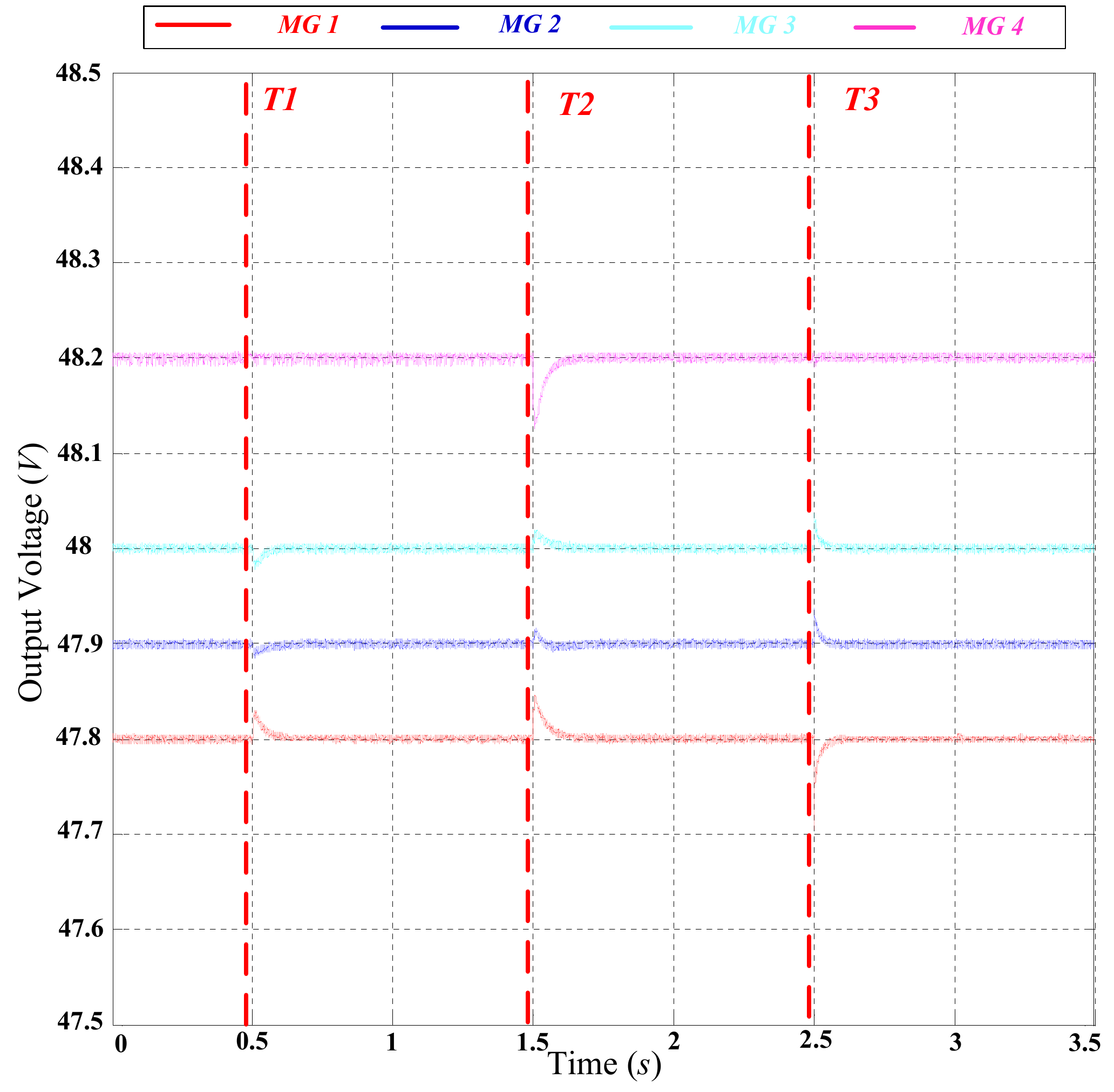}
                        \caption{Voltage Performance for PnP Test.}
                        \label{fig:PnP_Test_V}
                      \end{subfigure}
                      \begin{subfigure}[htb]{0.48\textwidth} 
                        \centering
                       \includegraphics[width=1\textwidth]{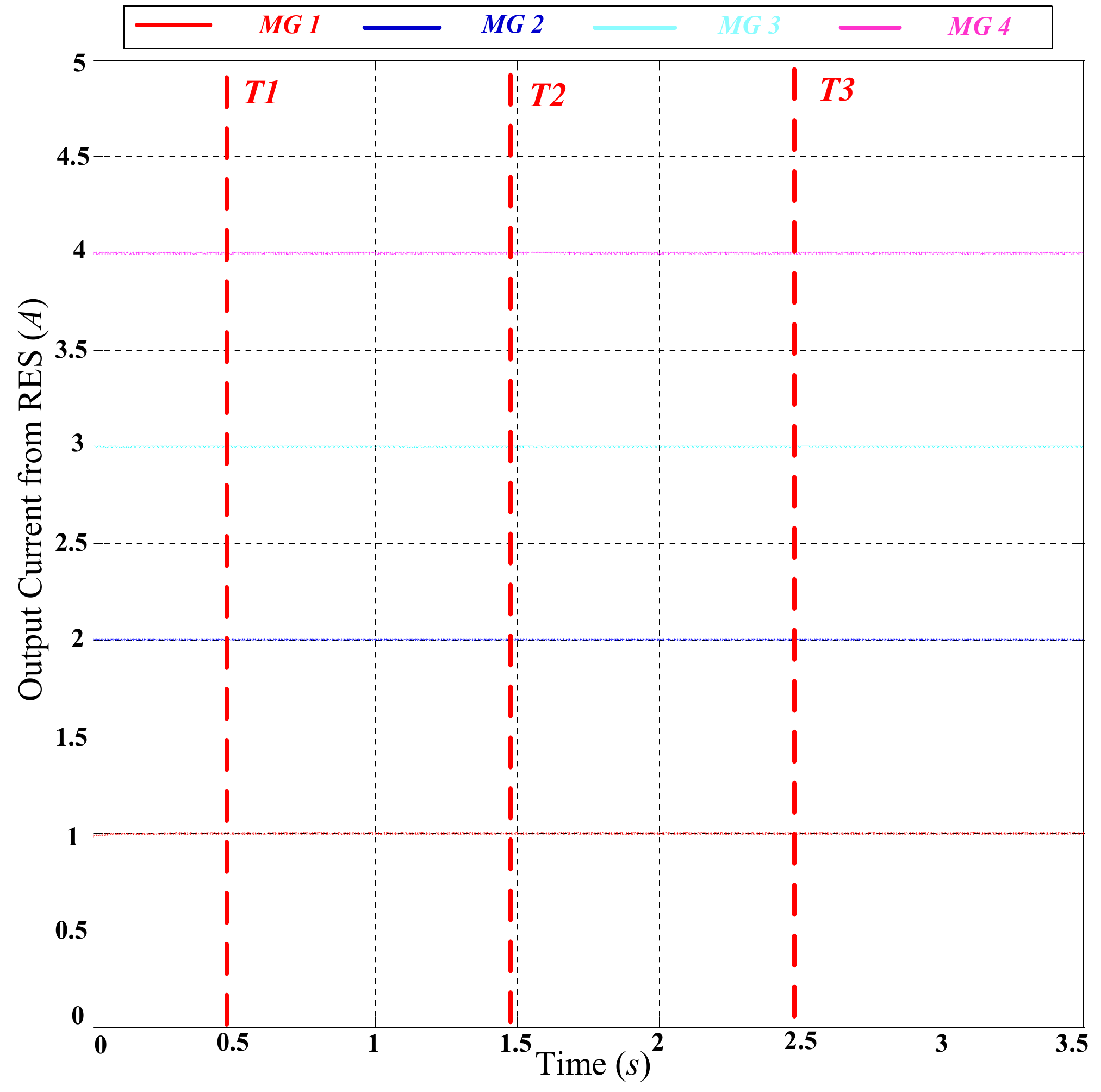}
                        \caption{Current Performance for PnP Test.}
                        \label{fig:PnP_Test_C}
                      \end{subfigure}
                     \caption{The Plug-in/-out Performance of primary PnP controllers.}
                      \label{fig:PnP_Test}                 
                    \end{figure}

Fig. \ref{fig:Tracking_Text} illustrates the current tracking performance by changing the current references for different modules. At $t = T1$, four MGs are connected together simultaneously. At $t = T2$, the current reference for MG $1$ is changed from $1A$ to $2.5A$. At $t = T3$, the current reference for MG $2$ is changed from $2A$ to $3.5A$. At $t = T4$, the current reference for MG $3$ is changed from $3A$ to $1.5A$. At $t = T5$, the current reference for MG $4$ is changed from $4A$ to $5.5A$. As shown in Fig. \ref{Tracking_Text_C}, whether the current references are increased or decreased, the output currents can track the changed reference. In addition, as shown in Fig. \ref{Tracking_Text_V}, when the current references are changed, the output voltages are only affected by little oscillations approximately $0.05V$. 
                    \begin{figure}[!htb]
                	\centering
                	\begin{subfigure}[htb]{0.48\textwidth}
                		\centering
                		\includegraphics[width=1\textwidth]{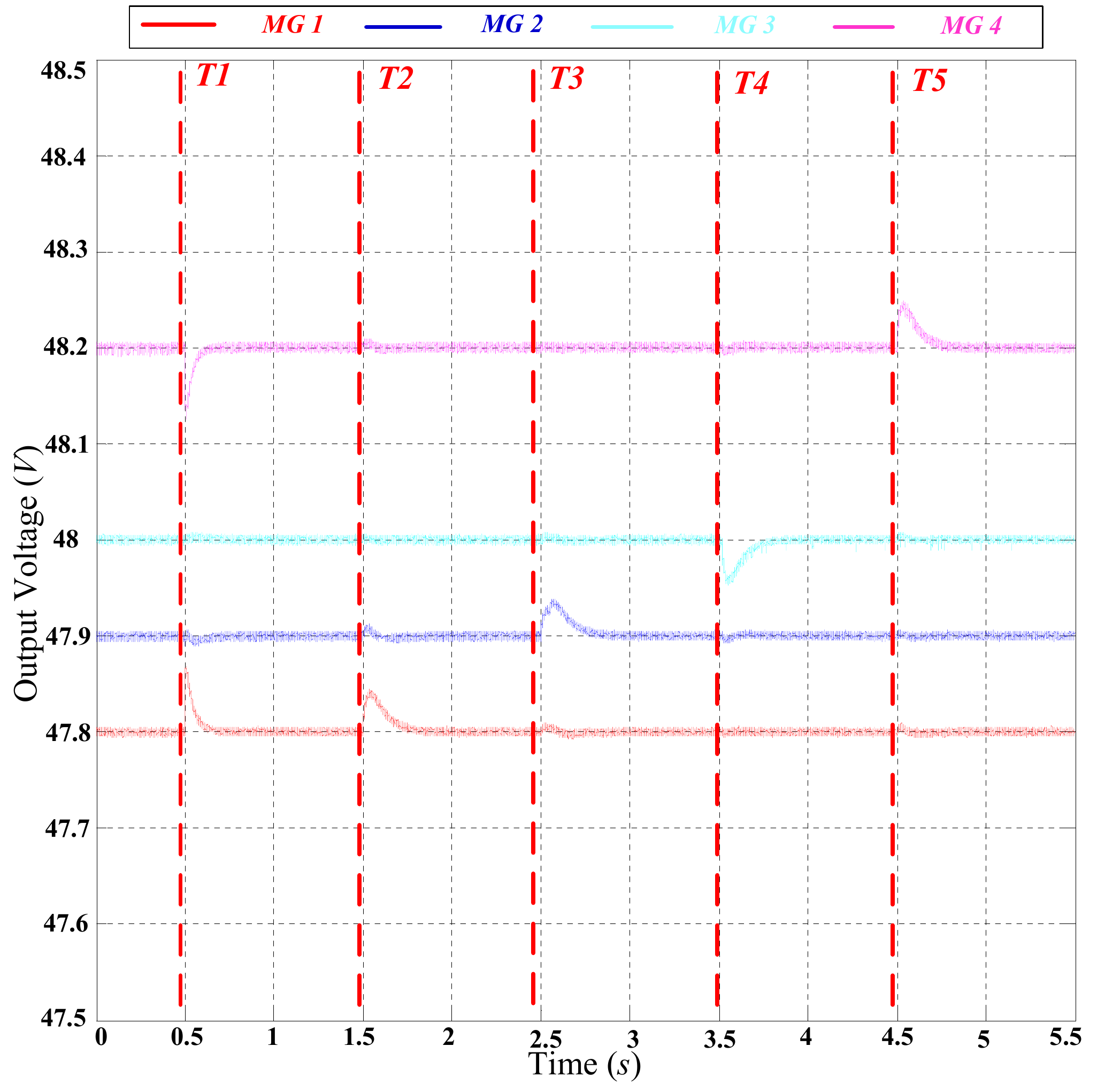}
                		\caption{Voltage Tracking Performance.}
                		\label{Tracking_Text_V}
                	\end{subfigure}
                	\begin{subfigure}[htb]{0.48\textwidth} 
                		\centering
                		\includegraphics[width=1\textwidth]{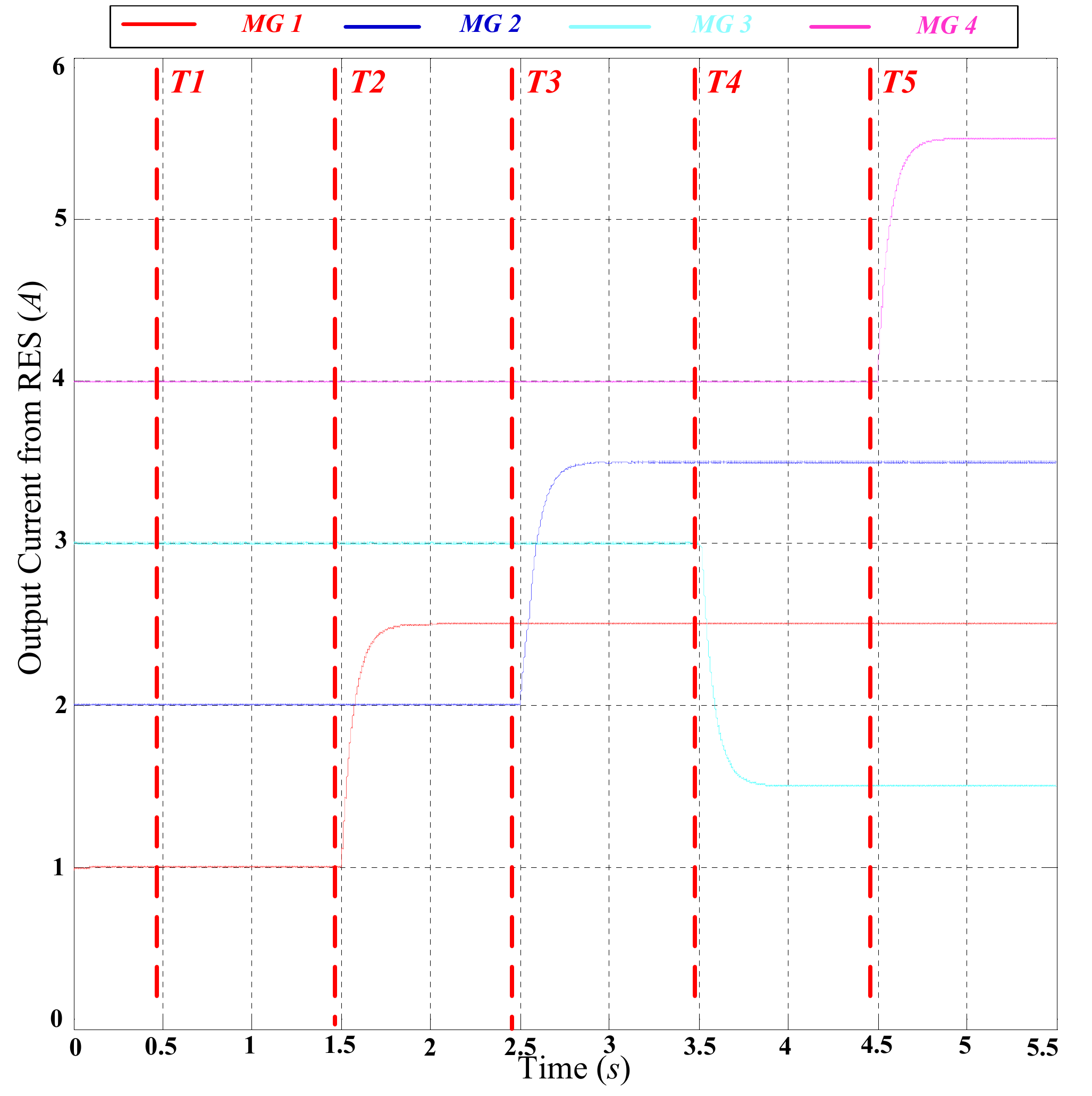}
                		\caption{Current Tracking Performance.}
                		\label{Tracking_Text_C}
                	\end{subfigure}
                	\caption{Voltage and Current Tracking Performance of PnP
                		decentralized controllers}
                	\label{fig:Tracking_Text}                 
                \end{figure}
\subsection{Case 2: Leader-Based Voltage/Current Distributed Secondary Controller Test}
In this subsection, the effect of proposed leader-based voltage/current distributed secondary controller is verified. At $t = T1$, four MGs are connected together simultaneously. At $t = T2$, the proposed leader-based voltage controller is enabled and the leader value is set as $48V$. It is illustrated in Fig. \ref{fig:Case_2_V} that after $t = T2$, the output voltages converge to the leader reference under $0.3s$. Then, at $t = T3$, the proposed leader-based current controller is enabled and leader value is set as $0.3p.u.$. As shown in Fig. \ref{fig:Case_2_C}, the proposed current controller can achieve current sharing in proportional and Fig. \ref{fig:Case_2_C_pu} illustrates that the per-unit current values can converge to the leader value within $1s$. In addition, Fig. \ref{fig:Case_2_V_3_4} illustrates that only $0.04V$ oscillations exist in the output voltages when enabling the leader-based current controller. Furthermore, when the reference for leader-based voltage controller is changed from $48V$ to $49V$ at $t = T4$, the output voltage still track the leader reference, as shown in Fig. \ref{fig:Case_2_V}. Similarly, when the reference for leader-based current controller is changed from $0.3p.u.$ to $0.4p.u.$ at $t = T5$, the output current can also track the new value as shown in Fig. \ref{fig:Case_2_C_pu}. Fig. \ref{fig:Case_2_V_7_8} illustrates that when the reference for leader-based current is changed, the output voltages are not affected.
\begin{figure}[!htb]
                \centering
                      \begin{subfigure}[htb]{0.60\textwidth} 
                        \centering    
                        \includegraphics[width=1\textwidth]{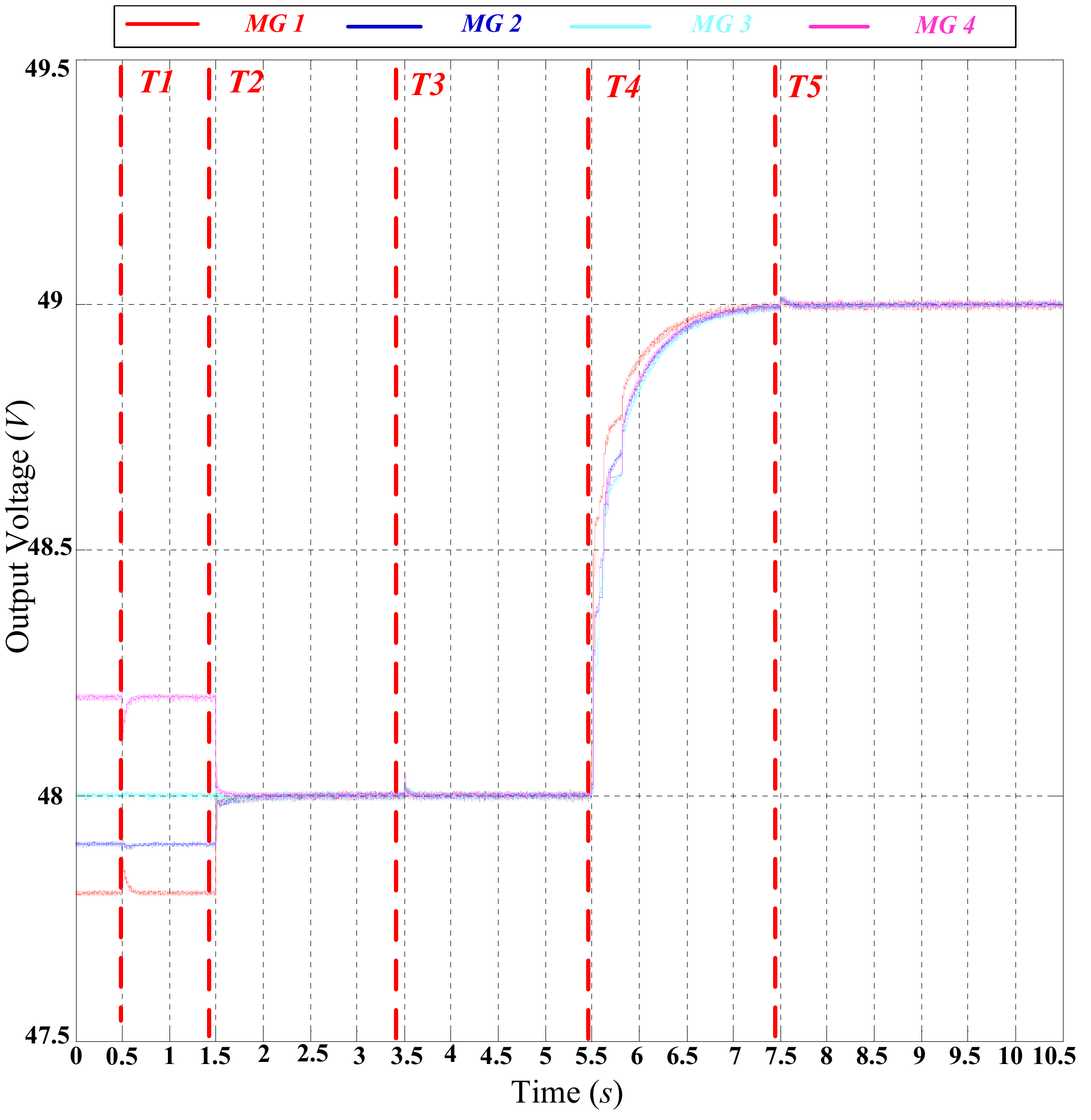}
                        \caption{Voltage Performance.}
                        \label{fig:Case_2_V}
                      \end{subfigure}
                      \begin{subfigure}[htb]{0.49\textwidth}
                       \centering
                       \includegraphics[width=1\textwidth]{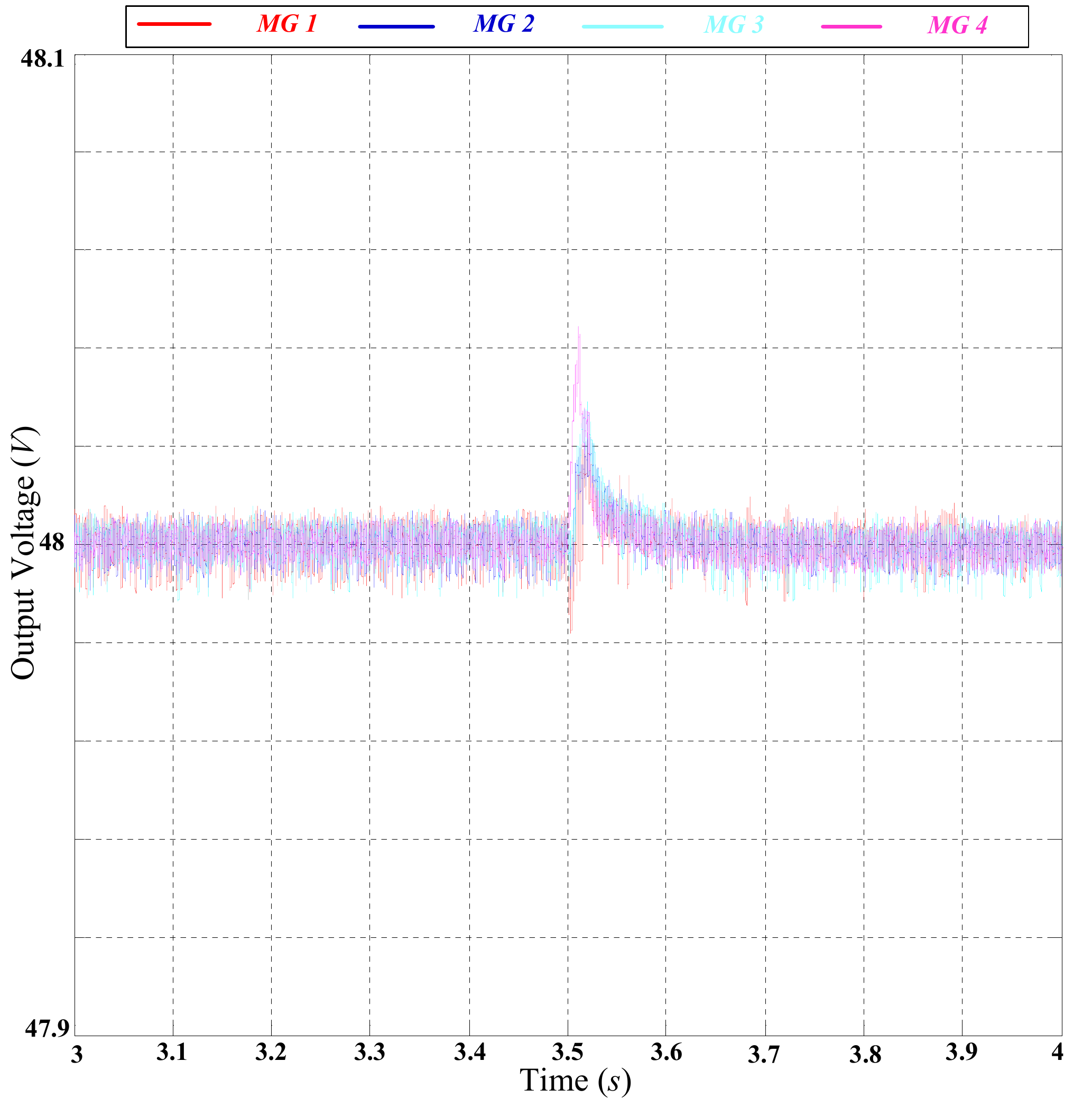}
                        \caption{Zoomed in Voltage Performance Between $3s$ and $4s$.}
                        \label{fig:Case_2_V_3_4}
                      \end{subfigure}
                      \begin{subfigure}[htb]{0.49\textwidth}\hspace{2mm} 
                       \centering
                       \includegraphics[width=1\textwidth]{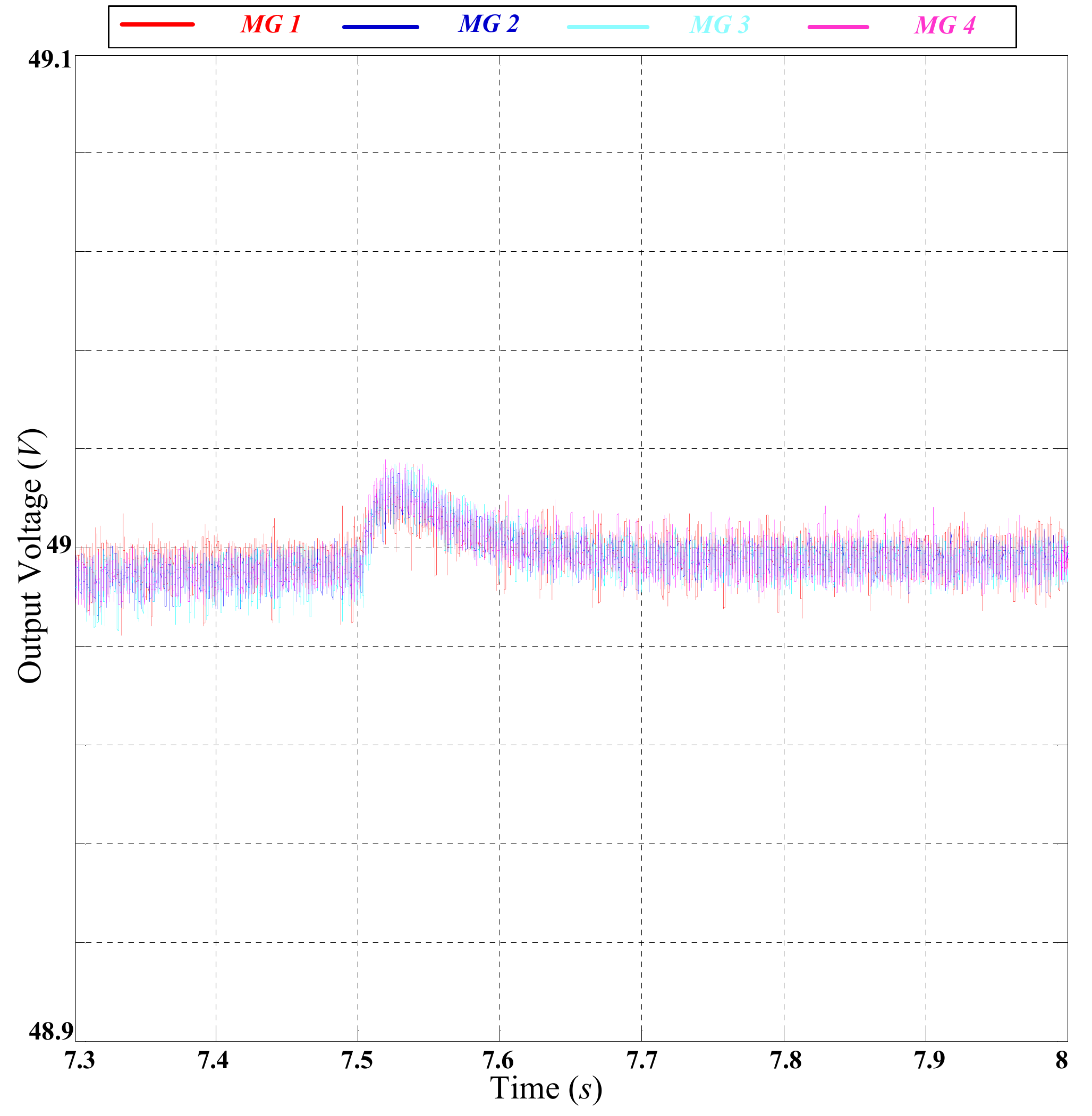}
                        \caption{Zoomed in Voltage Performance Between $7.3s$ and $8s$.}
                        \label{fig:Case_2_V_7_8}
                      \end{subfigure}
                     \caption{Voltage Performance for Leader-Based Voltage Secondary Controllers.}
                      \label{fig:Case_2_V_Full}                 
                    \end{figure}
                \begin{figure}[!htb]
                	\centering
                	\begin{subfigure}[htb]{0.48\textwidth} 
                		\centering    
                		\includegraphics[width=1\textwidth]{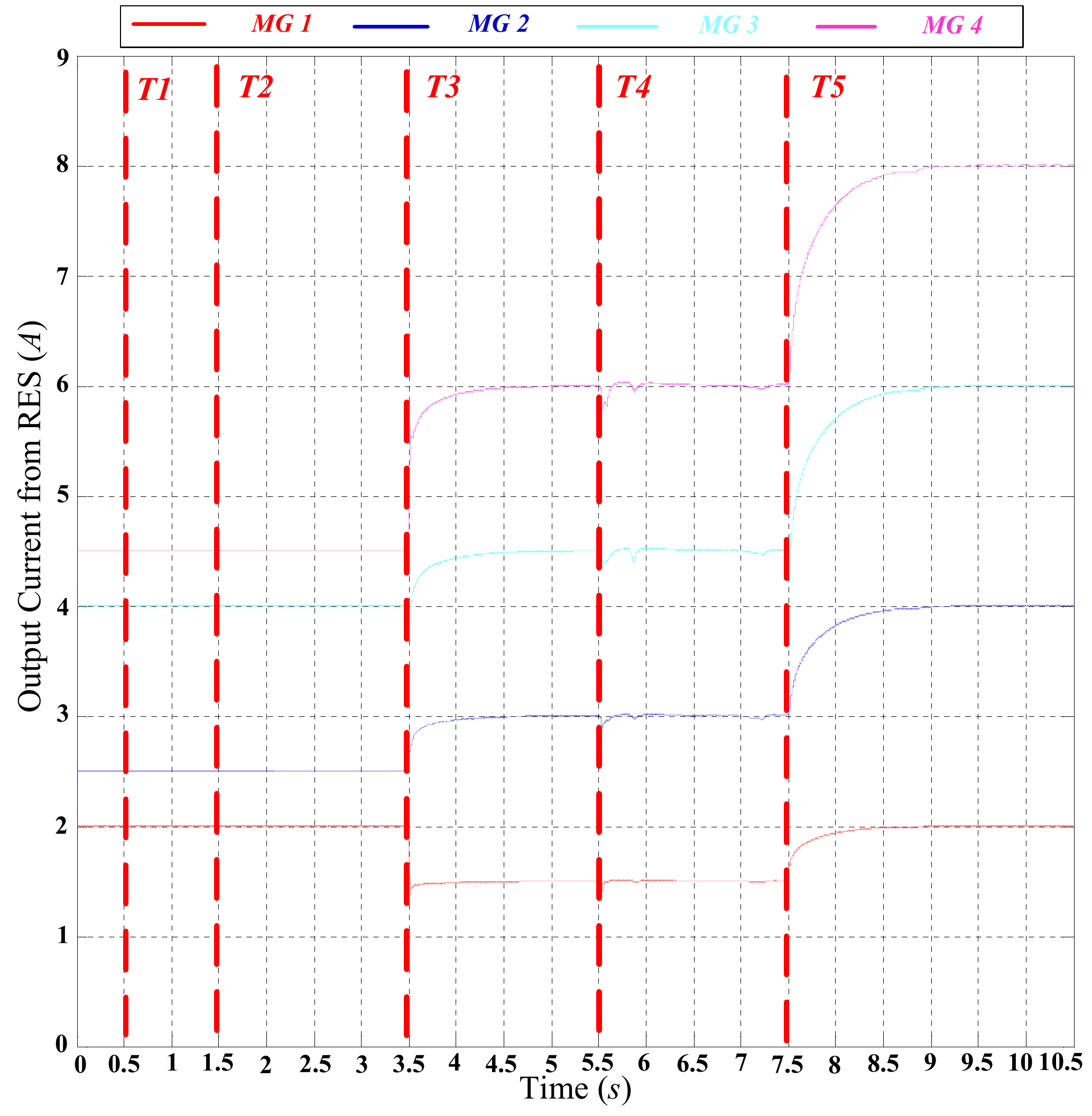}
                		\caption{Current Performance.}
                		\label{fig:Case_2_C}
                	\end{subfigure}
                	\begin{subfigure}[htb]{0.48\textwidth}
                		\centering
                		\includegraphics[width=1\textwidth]{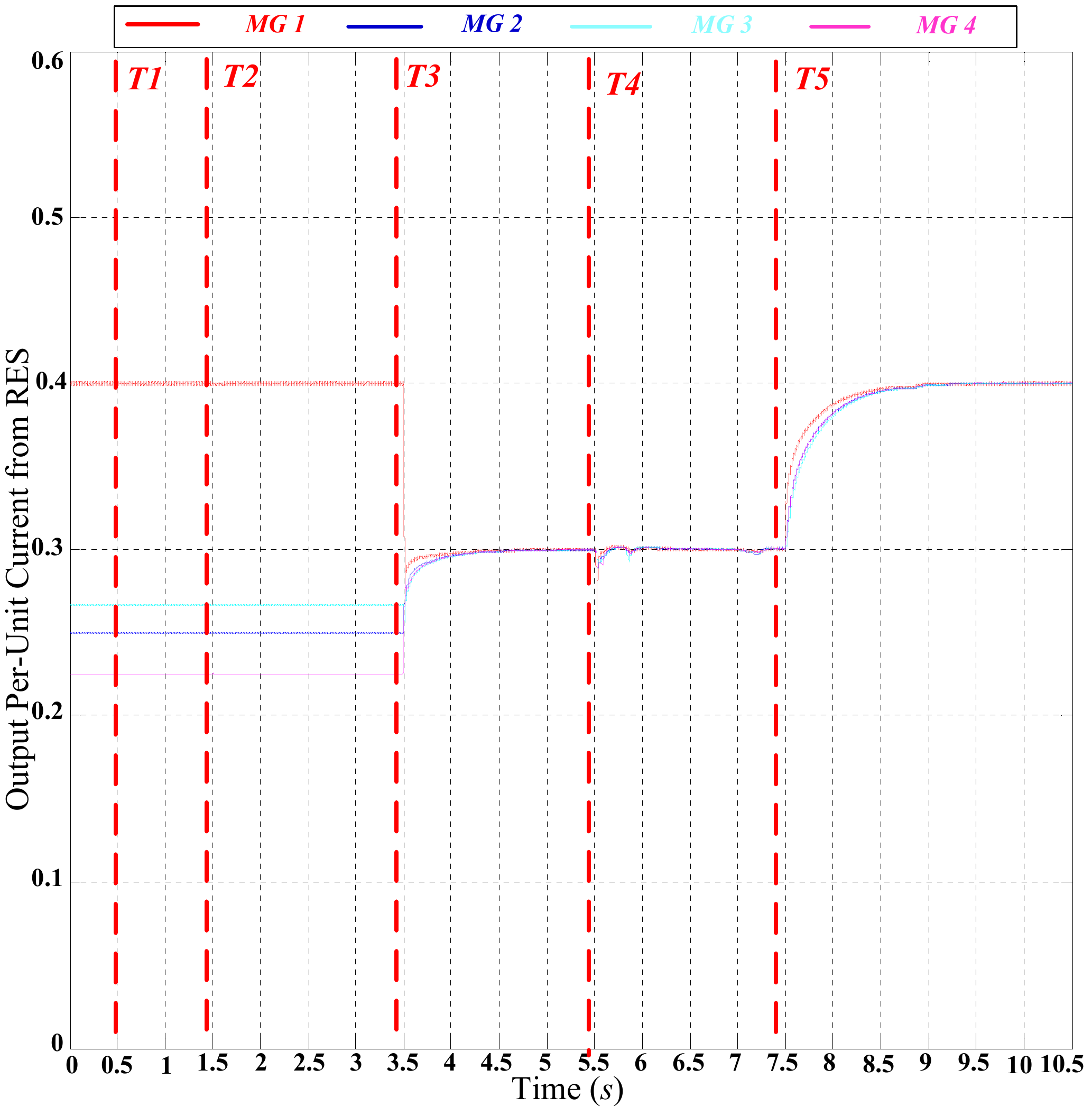}
                		\caption{Per-Unit Current Performance.}
                		\label{fig:Case_2_C_pu}
                	\end{subfigure}
                	\caption{Current Performance for Leader-Based Current Secondary Controllers.}
                	\label{fig:Case_2_C_Full}                 
                \end{figure}
\subsection{Case 3: PnP Test Considering Both Primary and Secondary Control Level}
In this subsection, the PnP effect of both primary and secondary controllers is tested. At $t = T1$, four MFs are connected together simultaneously. At $t = T2$ and $T3$, the proposed leader-based voltage controller and leader-based current controller are enabled, respectively. At $t = T4$, MG $2$ is plugged out of the MG cluster which means the communication links and electrical lines are all disconnected with the MG cluster. As shown in Figs. \ref{fig:Case_3_V} and \ref{fig:Case_3_C_full}, the other three MGs still operate in a stable way and then keep tracking the leader reference from the secondary control level. Meanwhile, MG $2$ can still use its own primary controller following the reference from the primary control level which are $47.8V$ for voltage and $0.25p.u.$ for current. At $t = T5$, MG $2$ is plugged into the cluster and the communication links of MG $2$ are also enabled. As shown in Fig. \ref{fig:Case_3_V} and \ref{fig:Case_3_C_full} after $t = T5$, both the output voltage and current of MG $2$ start to track the reference value of the leader node. Overall, the simulation results shows that even in presence of plug-in/out events, the MG cluster can behave in a stable way. And both output voltage and current tracking performance can be guaranteed. Furthermore, during the whole test, the control coefficients for each MG are not changed. 
\begin{figure}[!htb]
	\centering
	\includegraphics[scale=0.48]{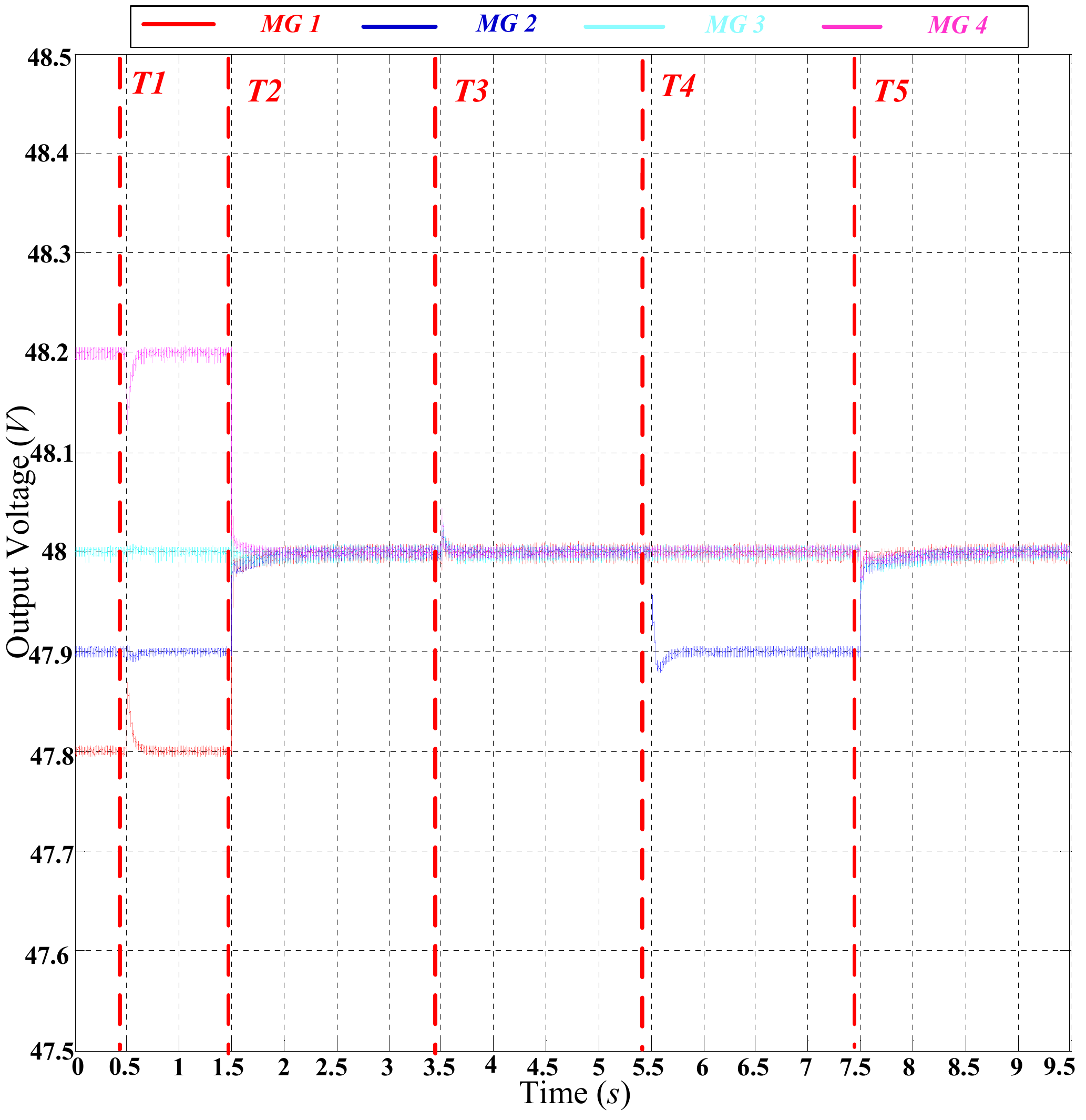}
	\caption{Voltage Performance for PnP Test considering both the Primary and Secondary Level.}
	\label{fig:Case_3_V}
\end{figure}
 \begin{figure}[!htb]
	\begin{subfigure}[!htb]{0.48\textwidth}
		\vspace{-1.1mm}
		\centering
		\includegraphics[width=1\textwidth]{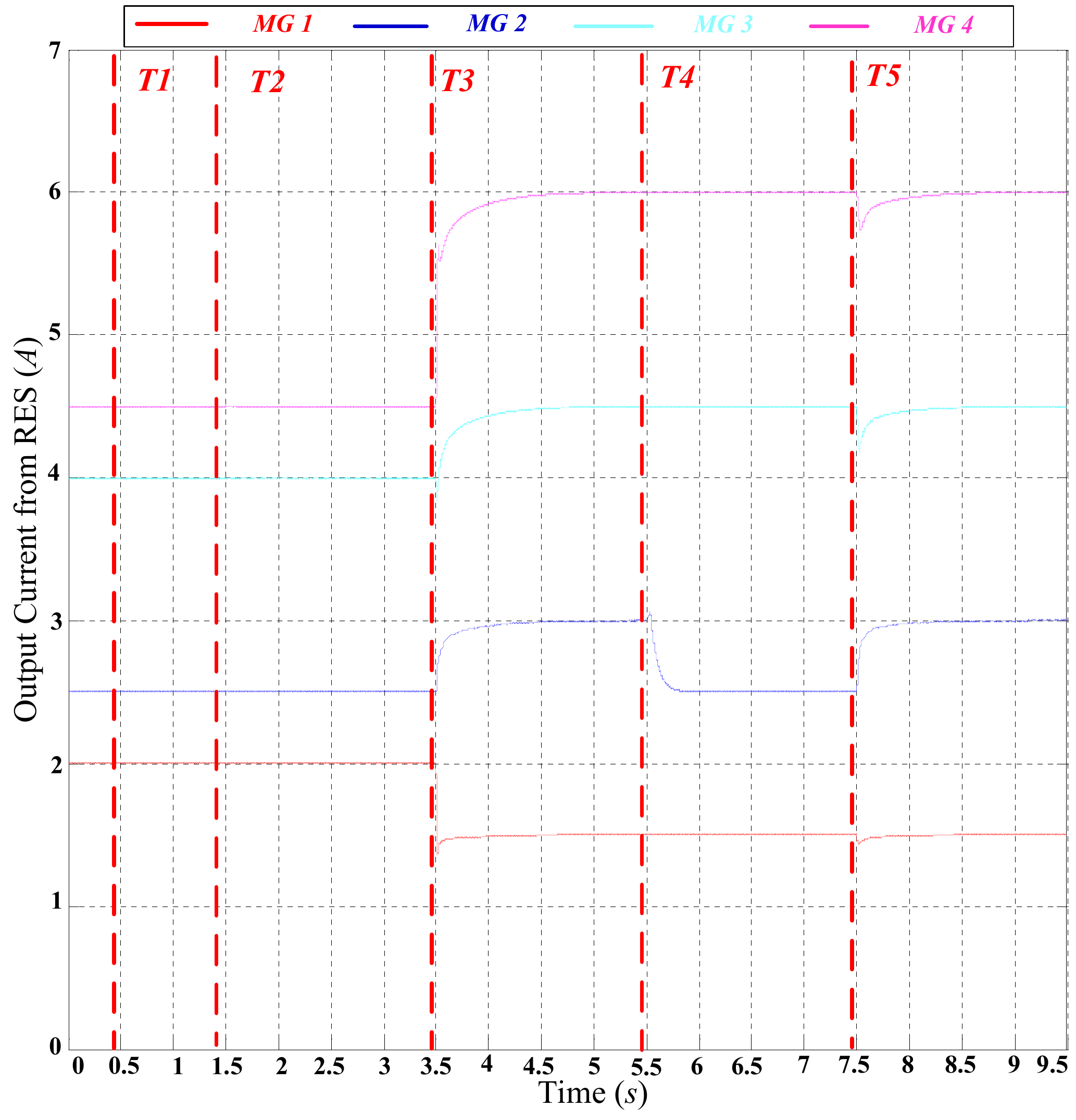}
		\caption{Current Performance.}
		\label{fig:Case_3_C}
	\end{subfigure}
	\begin{subfigure}[!htb]{0.48\textwidth}
		\centering
		\includegraphics[width=1\textwidth]{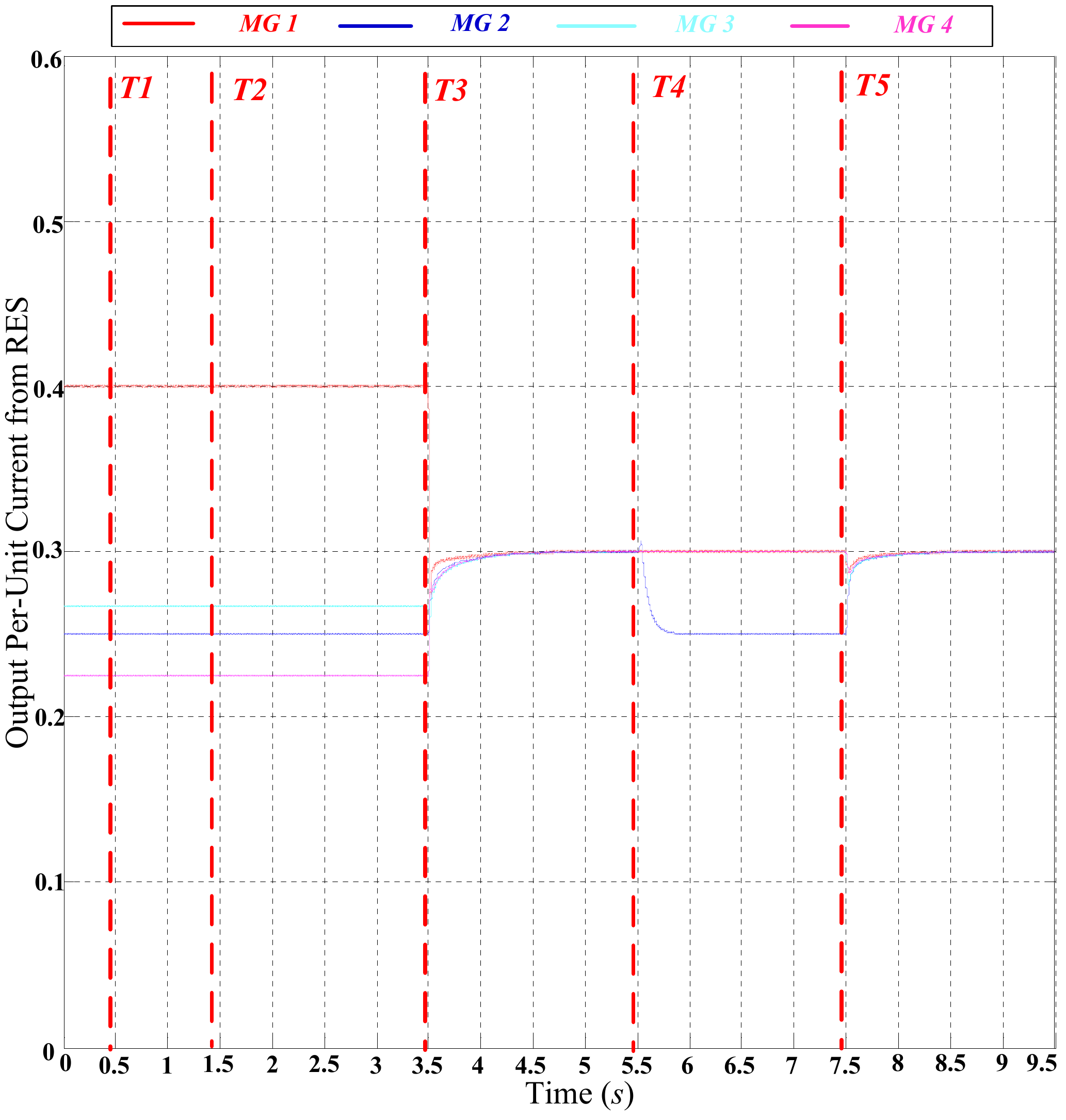}
		\caption{Per-Unit Current Performance.}
		\label{fig:Case_3_C_pu}
	\end{subfigure}
	\caption{Current Performance for PnP Test considering both the Primary and Secondary Level.}
	\label{fig:Case_3_C_full}                 
\end{figure}

\section{Conclusions}
\label{sec:conclusions}

In this paper, a hierarchical PnP Voltage/Current Controller for DC microgrid clusters with grid-forming/feeding modules is proposed including primary control level and secondary control level. In the primary control level, a novel PnP controller is proposed for a MG with grid-forming/feeding converters to achieve both the output voltages and currents tracking with the local control reference. A set only related to the local system information for control coefficients is found by which the controller can always be stable avoiding solving LMI problem. Meanwhile, the MG can achieve plug-in/out operation without changing the control coefficients to guarantee global stability of the MG cluster. In the secondary control level, the leader-based voltage/current distributed secondary controller is proposed to achieve both the voltage and current tracking with the information from the higher control level. Each MG only requires its own information and the information of its neighbours on the communication network graph. By approximating the primary PnP controller with unitary gains, the model of the whole system is established whose stability is proved by Lyapunov stability theory. Finally, the theoretical results are proven by the hardware in loop tests.

\clearpage
\appendix
\section{Matrices appearing in microgrid models}
     \label{sec:AppMatrices}

     \subsection{Matrices in the model of CDGU}
     \label{sec:AppMatrices_CDGU}
    This appendix collects all matrices appearing in Section \ref{sec:Model_C}.

        \paragraph{Overall model of a MG composed by $N$ CDGUs}
             \label{sec:AppOverallsys}

             \begin{equation}
               \label{TheSystem}
                \renewcommand\arraystretch{1.8}
               \begin{aligned}
                 \begin{bmatrix}
                   \subss{\dx}{1}^C \\
                   \subss{\dx}{2}^C \\
                   \subss{\dx}{3}^C \\
                   \vdots \\
                   \subss{\dx}{N}^C
                 \end{bmatrix} 
                 &= 
                 \underbrace{\left[\begin{array}{ccccc}
                       A_{11}^C+A_{load,1}^C & A_{12}^C & A_{13}^C & \dots  & A_{1N}^C \\
                       A_{21}^C & A_{22}+A_{load,2}^C & A_{23} & \dots  & A_{2N}^C \\
                       A_{31}^C & A_{32}^C & A_{33}+A_{load,3}^C & \dots  & A_{3N}^C \\
                       \vdots & \vdots & \vdots & \ddots & \vdots\\
                       A_{N1}^C & A_{N2}^C & A_{N3}^C & \dots  & A_{NN}+A_{load,N}^C
                     \end{array}
                   \right]}_{\mbf{A^C}}
                 \begin{bmatrix}
                   \subss{x}{1}^C \\
                   \subss{x}{2}^C \\
                   \subss{x}{3}^C \\
                   \vdots \\
                   \subss{x}{N}^C
                 \end{bmatrix} 
                 +\\
                 &+\,
                 \underbrace{\begin{bmatrix}
                     B_{1}^C & 0 & \dots & 0\\
                     0 & B_{2}^C & \ddots & \vdots\\
                     \vdots & \ddots & \ddots & 0\\
                     0& \dots & 0  & B_{N}^C
                   \end{bmatrix}}_{\mbf{B^C}}
                 \begin{bmatrix}
                   \subss{u}{1}^C\\
                   \subss{u}{2}^C\\
                   \vdots\\
                   \subss{u}{N}^C
                 \end{bmatrix}
                 + \underbrace{\begin{bmatrix}
                     M_{1}^C & 0 & \dots & 0\\
                     0 & M_{2}^C & \ddots & \vdots\\
                     \vdots & \ddots & \ddots & 0\\
                     0& \dots & 0  & M_{N}^C
                   \end{bmatrix}}_{\mbf{M^C}}
                 \begin{bmatrix}
                   \subss{d}{1}^C\\
                   \subss{d}{2}^C\\
                   \vdots\\
                   \subss{d}{N}^C
                 \end{bmatrix}\\    
                 \begin{bmatrix}
                   \subss{z}{1}^C\\
                   \subss{z}{2}^C\\
                   \subss{z}{3}^C\\
                   \vdots\\
                   \subss{z}{N}^C
                 \end{bmatrix}
                 &=
                 \underbrace{\begin{bmatrix}
                     H_{1}^C & 0 & 0 & \dots & 0 \\
                     0 & H_{2}^C & 0 & \ddots & \vdots \\
                     0 & 0 & H_{3}^C & \ddots & 0 \\
                     \vdots & \ddots & \ddots &\ddots & 0\\
                     0& \dots & 0 & 0  & H_{N}^C
                   \end{bmatrix}}_{\mbf{H^C}}\begin{bmatrix}
                   \subss{x}{1}^C\\
                   \subss{x}{2}^C\\
                   \subss{x}{3}^C\\
                   \vdots\\
                   \subss{x}{N}^C
                 \end{bmatrix}.
               \end{aligned} 
             \end{equation}
             
             \subsection{Matrices in the model of MG Clusters}
              \label{sec:AppMatrices_DGM}
             This appendix provides all matrices appearing in Section \ref{PV/Battery_Model}.
             
              \paragraph{Overall model of MG clusters composed by $N$ MGs}
             \label{sec:AppOverallsys_DGM}
             
             \begin{equation}
             \label{TheSystem_DGM}
             \renewcommand\arraystretch{1.8}
             \begin{aligned}
             \begin{bmatrix}
             \subss{\dx}{1} \\
             \subss{\dx}{2} \\
             \subss{\dx}{3} \\
             \vdots \\
             \subss{\dx}{N}
             \end{bmatrix} 
             &= 
             \underbrace{\left[\begin{array}{ccccc}
             	A_{11}+A_{load,1} & A_{12} & A_{13} & \dots  & A_{1N} \\
             	A_{21} & A_{22}+A_{load,2} & A_{23} & \dots  & A_{2N} \\
             	A_{31} & A_{32} & A_{33}+A_{load,3} & \dots  & A_{3N} \\
             	\vdots & \vdots & \vdots & \ddots & \vdots\\
             	A_{N1} & A_{N2} & A_{N3} & \dots  & A_{NN}+A_{load,N}
             	\end{array}
             	\right]}_{\mbf{A}}
             \begin{bmatrix}
             \subss{x}{1} \\
             \subss{x}{2} \\
             \subss{x}{3} \\
             \vdots \\
             \subss{x}{N}
             \end{bmatrix} 
             +\\
             &+\,
             \underbrace{\begin{bmatrix}
             	B_{1} & 0 & \dots & 0\\
             	0 & B_{2} & \ddots & \vdots\\
             	\vdots & \ddots & \ddots & 0\\
             	0& \dots & 0  & B_{N}
             	\end{bmatrix}}_{\mbf{B}}
             \begin{bmatrix}
             \subss{u}{1}\\
             \subss{u}{2}\\
             \vdots\\
             \subss{u}{N}
             \end{bmatrix}
             + \underbrace{\begin{bmatrix}
             	M_{1} & 0 & \dots & 0\\
             	0 & M_{2} & \ddots & \vdots\\
             	\vdots & \ddots & \ddots & 0\\
             	0& \dots & 0  & M_{N}
             	\end{bmatrix}}_{\mbf{M}}
                              \begin{bmatrix}
                                \subss{d}{1}\\
                                \subss{d}{2}\\
                                \vdots\\
                                \subss{d}{N}
                              \end{bmatrix}\\    
             \begin{bmatrix}
             \subss{z}{1}\\
             \subss{z}{2}\\
             \subss{z}{3}\\
             \vdots\\
             \subss{z}{N}
             \end{bmatrix}
             &=
             \underbrace{\begin{bmatrix}
             	H_{1} & 0 & 0 & \dots & 0 \\
             	0 & H_{2} & 0 & \ddots & \vdots \\
             	0 & 0 & H_{3} & \ddots & 0 \\
             	\vdots & \ddots & \ddots &\ddots & 0\\
             	0& \dots & 0 & 0  & H_{N}
             	\end{bmatrix}}_{\mbf{H}}\begin{bmatrix}
             \subss{x}{1}\\
             \subss{x}{2}\\
             \subss{x}{3}\\
             \vdots\\
             \subss{x}{N}
             \end{bmatrix}.
             \end{aligned} 
             \end{equation}
\section{Electrical Parameters and Control Coefficients for HiL Test}
     \label{sec:AppElectrPar}
     In this appendix, all the electrical parameters and HiL control
     coefficients used in Section \ref{sec:simulation_results} are provided.
   \begin{table}[!htb]                 
                      \caption{Electrical setup parameters}
                      \centering
                      \begin{tabular}{*{4}{c}}
                        \toprule
                        Parameter & Symbol & Value \\
                        \midrule
                        Output capacitance & $C_{t*}$ & 2.2 $mF$\\
                         Inductance for CDGU & $L_{t*}^C$ & 0.018$\mbox{ }H$\\
                        Inductor + switch loss resistance for CDGU & $R_{t*}^C$ & 0.2 $\Omega$ \\
                        Inductance for VDGU & $L_{t*}^V$ & 0.0018$\mbox{ }H$\\
                        Inductor + switch loss resistance for VDGU & $R_{t*}^V$ & 0.1 $\Omega$ \\
                        Switching frequency & $f_{sw}$ & 10 kHz\\
                        \bottomrule 
                      \end{tabular}                  	
                      \label{tbl:electrical_setup}
                    \end{table}
                     \begin{table}[!htb]
                	\caption{Transmission lines parameters}	
                	\label{tbl:linespar5}
                	\centering
                	\begin{tabular}{*{3}{c}}
                		\toprule
                		Connected MGs $(i,j)$ & Resistance $R_{i,j} (\Omega)$ & Inductance
                		$L_{i,j} (mH)$ \\
                		\midrule
                		$(1,2)$ & 0.3 & 1.8 \\
                		$(2,3)$ & 0.6 & 5.4 \\
                		$(3,4)$ & 0.8 & 7.2 \\
                		$(4,1)$ & 0.7 & 3.6 \\
                		\bottomrule
                	\end{tabular}
                \label{tbl:line_parameters}
                \end{table}  
            \begin{table}[!htb]                 
         	\caption{Control Coefficients}
         	\centering
         	\begin{tabular}{*{4}{c}}
         		\toprule
         	\multicolumn{2}{c}{Control Coefficients}	 & Symbol & Value \\
         		\midrule
         	\multirow{6}{*}{Primary Control Level for Single MG}  &\multirow{3}{*}{Coefficients for CDGUs} & $     k_{1,*}^C$ & -0.01 \\
         	&	& $ k_{2,*}^C$ & -2.7015 \\
         	&   & $ k_{3,*}^C$ & 40.4018  \\
             & 	\multirow{3}{*}{ Coefficients for VDGUs} & $ k_{1,*}^V$ & -0.480 \\
         	&	 & $ k_{2,*}^V$ & -0.108 \\
         	&	 & $k_{3,*}^V$ & 30.673\\
         		 \midrule
         	\multirow{4}{*}{Secondary Control Level for MG Cluster}&\multirow{2}{*}{Leader-based Voltage Controllers}& $     k_{pV}$ & 4\\
         	& & $     k_{iV}$ & 22\\
         	& \multirow{2}{*}{Leader-based Current Controllers}& $     k_{pC}$ & 3 \\
         	& & $     k_{iC}$ & 20\\
         	\bottomrule
         	\end{tabular}                  	
         	\label{tbl:control coefficients}
         \end{table}

\bibliographystyle{IEEEtran}
\bibliography{IEEEabrv,PnP_improved}

\end{document}